\documentclass[12pt]{amsart}
\usepackage{amsmath,amsthm,amscd,amssymb,amsfonts}
\usepackage{mathrsfs,upgreek}
\usepackage{hyperref}
\usepackage{mathtools}
\usepackage{graphicx}
\usepackage{fullpage}
\usepackage{cleveref}
\usepackage{cleveref}
\usepackage[switch]{lineno}
\usepackage{etoolbox}
\usepackage{enumitem}
\modulolinenumbers[1]

\makeatletter
\let\@wraptoccontribs\wraptoccontribs
\makeatother

\usepackage[sans]{dsfont} 
\usepackage[T1]{fontenc}
\DeclareMathAlphabet{\mathpzc}{OT1}{pzc}{m}{it} 

\numberwithin{equation}{section} 
\numberwithin{figure}{section} 

\theoremstyle{plain}
\newtheorem{theo}{Theorem}
\newtheorem{prop}{Proposition}[section]
\newtheorem{coro}[prop]{Corollary}
\newtheorem{lemm}[prop]{Lemma}

\newtheorem{theoalph}{Theorem}

\theoremstyle{definition}
\newtheorem{defi}[prop]{Definition}

\theoremstyle{remark}
\newtheorem{rema}[prop]{Remark}
\newtheorem{exam}[prop]{Example}

\newtheoremstyle{citing}
  {3pt}
  {3pt}
  {\itshape}
  {}
  {\bfseries}
  {.}
  {.5em}
  {\thmnote{#3}}

\theoremstyle{citing}

\newcommand{\C}{\mathbb{C}}

\newcommand{\N}{\mathbb{N}}

\newcommand{\R}{\mathbb{R}}

\newcommand{\sB}{\mathscr{B}}

\newcommand{\sF}{\mathscr{F}}

\newcommand{\teta}{\widetilde{\teta}}

\renewcommand{\=}{\coloneqq}
\newcommand{\dd}{\hspace{1pt}\operatorname{d}\hspace{-1pt}}

\DeclareMathOperator{\diam}{diam}

\newcommand{\Nmax}{N_\text{max}(f)}

\begin{document}

\title[Thermodynamic formalism for intermittent maps with multiple fixed points and phase transitions]{Thermodynamic formalism for intermittent maps with multiple neutral fixed points and phase transitions}
\author{Daniel Coronel}
 \address{Facultad de Matem\'aticas, Pontificia Universidad Cat\'olica de Chile, Campus San Joaqu\'in, Avenida Vicu\~{n}a Mackenna 4860, 
Santiago, Chile.} 
\email{acoronel@uc.cl}
\author{Francisco Monardes}
\address{Department of Mathematics, The Pennsylvania State University, University Park, PA 16802, USA.}
\email{fam5364@psu.edu}

\begin{abstract}
In this paper, we develop the thermodynamic formalism for intermittent full-branch interval maps with finitely many neutral fixed points and for a broad class of possibly discontinuous potentials. Under a hyperbolicity assumption on the potential, we prove the existence of a unique equilibrium state with positive entropy and exponential decay of correlations. Furthermore, we characterize the phase transitions for the one-parameter family $\beta\varphi$, with $\beta>0$, and prove that, at the critical parameter, multiple ergodic equilibrium states may coexist, at most one of which has positive entropy. Our approach relies on a detailed analysis of conformal measures.
\end{abstract}

\maketitle

%
%

\section{Introduction}
\label{s:intro}
The space of invariant measures of a dynamical system is typically large, and identifying the physically or statistically relevant ones is a central problem in ergodic theory. Thermodynamic formalism provides a powerful framework for selecting such measures. In the uniformly hyperbolic setting, it is classical that every Hölder-continuous potential admits a unique equilibrium state with strong statistical properties, and the associated pressure function is real analytic.

Beyond the uniformly hyperbolic regime, the situation becomes significantly more complex. For non-uniformly hyperbolic systems, Hölder-continuous potentials may admit multiple equilibrium states, and the pressure function may fail to be real analytic. This phenomenon is known as a phase transition. One of its simplest manifestations occurs in the Manneville--Pomeau family \cite{MannevillePomeau}, where the presence of a neutral fixed point breaks uniform expansiveness and can lead to phase transitions even for Hölder-continuous potentials (see \cite{BruinTerhesiuTodd,ClimenhagaThompson,coronelRivera1,coronelRivera2,PrellbergSlawny,Sarig1}).

The presence of neutral fixed points is commonly referred to as intermittency. This phenomenon has been extensively studied, with much of the literature focusing on the Manneville--Pomeau family and its variants, which are characterized by having a single source of intermittency (see, for example, \cite{BaladiTodd,BruinTerhesiuTodd,CastroVarandas,ClimenhagaThompson,coronelRivera1,coronelRivera2,GaribaldiInoquio1,GaribaldiInoquio2,Gouzel2,Holland,LiRivera2,Liverani,MelbourneTerhesiu,PollicottWeiss,PrellbergSlawny,Sarig1,Sarig2}). Maps with multiple neutral fixed points have also been considered (\cite{MelbourneTerhesiu,Thaler1,Thaler2,Thaler3,zweimuller1,zweimuller2}), and have recently attracted renewed attention (\cite{coatesGelfert,coatesMelbourne,CoatesMelbourneTalebi,coatesLuzzatto,coatesLuzzattoMubarak}).

In this paper, we study equilibrium states and phase transitions for a class of intermittent interval maps with finitely many branches and multiple neutral fixed points. We consider a class of branch Hölder-continuous potentials (see Definition \ref{branch Holder}), which are designed to retain key features of Hölder regularity while allowing for discontinuities.

We prove that a subclass of these potentials satisfies a hyperbolicity condition under which there exists a unique equilibrium state with strong statistical properties, closely resembling the uniformly hyperbolic case (see Theorem \ref{general theo A}). On the other hand, we show that branch Hölder-continuous potentials can exhibit at most one phase transition, beyond which the pressure function becomes affine. At the transition parameter, multiple ergodic equilibrium states may coexist; however, at most one of them has positive entropy (see Theorem \ref{Theo A}).

A key ingredient in our analysis is a detailed description of conformal measures, given in Theorem \ref{t: conformal measure}, which may be of independent interest.

An important advantage of allowing discontinuous potentials in our setting is that our results apply to the family of geometric potentials. This case is studied in detail in a companion paper \cite{CoronelMonardes}, where a complete description of the corresponding phase transition is obtained.

\subsection{Full branch maps with neutral fixed points}

\paragraph{\textbf{Notation.}} Given functions $g$ and $h$, we write $g(x)\sim h(x)$ as $x\to x_0$ if $g(x)/h(x)\to 1$ as $x\to x_0$.

We define $\sF$ to be the class of interval maps $f\colon[0,1]\to[0,1]$ satisfying the following properties.

\begin{enumerate}[label=\textbf{P\arabic*.}, ref=P\arabic*]
\item\label{itm P1} There exist subintervals $I_1,\ldots,I_d$, with $d\geq 2$, such that $\bigcup_{k=1}^d I_k = [0,1]$, and for each $k$, the restriction $f|_{I_k}$ is a bijection of class $C^1$ with Hölder-continuous derivative, admits a homeomorphic extension $f_k\colon \overline{I}_k \to [0,1]$, and the derivative $Df|_{\text{int}I_k}$ admits a continuous extension to $\overline{I}_k$.

\item\label{itm P2} There exists a nonempty subset $N(f)\subset \{1,\ldots,d\}$ such that for every $k\in N(f)$ the following hold:
\begin{itemize}
\item[(1)] There exists $\xi_k\in I_k$ such that $f(\xi_k)=\xi_k$, $Df(\xi_k)=1$, and $Df(x)>1$ for every $x\in I_k\setminus\{\xi_k\}$.

\item[(2)] There exist constants $\alpha_k>0$ and $b_k>0$ such that
\begin{equation}
|f(x)-x| \sim b_k |x-\xi_k|^{1+\alpha_k} \quad \text{as } x\to\xi_k,
\end{equation}
\item[(3)] The restriction $\log Df|_{I_k}$ is Hölder-continuous of exponent $\min\{1,\alpha_k\}$.
\end{itemize}

\item\label{itm P3} For each $k\in \{1,\ldots,d\}\setminus N(f)$, we have 
$$
\inf_{x\in I_k}Df(x)>1.
$$
\end{enumerate}

For a map $f\in \mathscr{F}$ and each $k\in \{1,\ldots,d\}$, we denote by $\xi_k$ the fixed point of $f$ contained in $I_k$, and we define
\[
E(f) := \{1,\ldots,d\}\setminus N(f).
\]
If $k\in N(f)$, we say that $\xi_k$ is a \emph{neutral fixed point}.
We also define $0= x_0 <x_1<\dots<x_d=1$  so that $\text{int } I_k =(x_{k-1},x_k)$ for every $k\in\{1,\dots,d\}$.
\subsection{Keller spaces}\label{subseq: Keller spaces}

In this subsection, we recall a function space introduced in \cite{Keller1}. We follow \cite{Juan1}, where a more detailed exposition can be found.

Let $I$ be a compact interval in $\mathbb{R}$, and let $m$ be an atom-free Borel probability measure on $I$. We consider functions defined on $I$ up to equality almost everywhere with respect to $m$.

We define a pseudometric $d$ on $I$ by
\begin{equation*}
    d(x,y) := m(\{z\in I : x\leq z\leq y \text{ or } y\leq z \leq x\}),
\end{equation*}
and for $x\in I$ and $\varepsilon>0$, we define
\begin{equation*}
    B_d(x,\varepsilon) := \{y\in I : d(x,y)<\varepsilon\}.
\end{equation*}

Given a function $h\colon I \to \mathbb{C}$ and $\varepsilon>0$, for each $x\in I$ we define
\begin{equation*}
    \operatorname{osc}(h,\varepsilon,x) := \operatorname*{ess\,sup}\{|h(y)-h(y')| : y,y' \in B_d(x,\varepsilon)\},
\end{equation*}
and
\begin{equation*}
    \operatorname{osc}_1(h,\varepsilon) := \int_{I}\operatorname{osc}(h,\varepsilon,x) \dd m(x).
\end{equation*}

Fix $A>0$. For $\gamma\in(0,1]$ and $h\colon I\to\mathbb{C}$, we define
\begin{equation*}
    |h|_{\gamma,1} := \sup_{\varepsilon\in (0,A]} \frac{\operatorname{osc}_1(h,\varepsilon)}{\varepsilon^{\gamma}}, 
    \quad 
    \| h\|_{\gamma,1} := \| h\|_1 + |h|_{\gamma,1}.
\end{equation*}

Observe that $|h|_{\gamma,1}$ and $\|h\|_{\gamma,1}$ depend only on the equivalence class of $h$. Let $H^{\gamma,1}(m)$ be the space of equivalence classes of functions $h\colon I\to\mathbb{C}$ such that $\|h\|_{\gamma,1}<+\infty$. The quantity $\|\cdot\|_{\gamma,1}$ defines a norm on $H^{\gamma,1}(m)$.

Keller proved in \cite[Theorem 1.13]{Keller1} that $H^{\gamma,1}(m)$ is a Banach space with respect to this norm.

\subsection{Statement of results}

Let $f:[0,1]\to[0,1]$ be a map in $\sF$. A \emph{potential} is a Borel measurable function $\varphi\colon [0,1]\to\R$, and for every $n\in \N$ we define
\begin{equation*}
    S_n\varphi := \sum_{j=0}^{n-1} \varphi\circ f^j.
\end{equation*}
We denote by $\mathcal{M}_f$ the space of $f$-invariant Borel probability measures. The \emph{pressure} of a potential $\varphi$ is defined by
\begin{equation}\label{pressure}
    P(\varphi) := \sup\left\{h_{\mu}(f) + \int \varphi\dd\mu : \mu\in \mathcal{M}_f\right\},
\end{equation}
where $h_{\mu}(f)$ denotes the measure-theoretic entropy of $\mu$. A measure attaining the supremum in \eqref{pressure} is called an \emph{equilibrium state} for $\varphi$.

Given two Borel measurable partitions $\mathcal{P}$ and $\mathcal{Q}$ of $[0,1]$, we define
\begin{equation*}
    f^{-1}\mathcal{P} := \{f^{-1}(P): P \in \mathcal{P}\}
    \quad \text{and} \quad
    \mathcal{P}\vee \mathcal{Q} := \{P\cap Q : P\in \mathcal{P},\; Q\in\mathcal{Q}\}.
\end{equation*}

\begin{defi}\label{branch Holder}
Let $\gamma\in (0,1]$ and let $\mathcal{P}:=\{I_1,\ldots,I_d\}$. We say that a potential $\varphi\colon [0,1]\to\R$ is \emph{branch Hölder-continuous of exponent} $\gamma$ (resp. \emph{branch continuous}) \emph{with respect to} $f$ if there exists $N\in \N$ such that for every
\begin{equation*}
P\in \bigvee_{j=0}^{N-1}f^{-j}\mathcal{P},
\end{equation*}
the restriction $\varphi|_P$ is Hölder-continuous of exponent $\gamma$ (resp. continuous) and admits a continuous extension to the boundary of $P$. We denote by $BH^{\gamma}(f)$ the space of all branch Hölder-continuous potentials of exponent $\gamma$ with respect to $f$.
\end{defi}

Given a Borel measurable function $g\colon [0,1]\to [0,+\infty)$, a Borel probability measure $\nu$ on $[0,1]$ is called \emph{$g$-conformal} for $f$ if for every Borel measurable set $A\subset [0,1]$ on which $f$ is injective we have
\begin{equation*}
    \nu(f(A)) = \int_A g \dd\nu.
\end{equation*}

We say that $\varphi\colon[0,1]\to\R$ is \emph{hyperbolic} for $f$ if there exists $n\in \N$ such that
\begin{equation}\label{equation hyperbolicity introduction}
    \sup_{x\in[0,1]}\frac{1}{n}S_n\varphi(x) < P(\varphi).
\end{equation}
In Proposition \ref{prop characterization of hyperbolicity} we prove that a potential $\varphi\in BH^{\gamma}(f)$ is hyperbolic if and only if
\begin{equation}\label{eq equivalence of hyperbolicity}
    P(\varphi)> \max_{k\in N(f)}\varphi(\xi_k).
\end{equation}

A neutral fixed point $\xi$ is called \emph{maximizing for the potential $\varphi$} if
\begin{equation}
    \varphi(\xi) = \max_{k\in N(f)}\varphi(\xi_k).
\end{equation}
We denote by $\Nmax\subseteq N(f)$ the set of indices of maximizing neutral fixed points.

\begin{theoalph}\label{t: conformal measure}
Let $\varphi$ be a branch continuous potential with respect to $f$. There exists an $\exp(P(\varphi)-\varphi)$-conformal measure for $f$. Moreover, the following hold.
\begin{enumerate}
    \item[1.] If $\varphi$ is hyperbolic for $f$, then every  $\exp(P(\varphi)-\varphi)$-conformal measure is atom-free.
    \item[2.] There is at most one $\exp(P(\varphi)-\varphi)$-conformal measure that is not fully supported. When such a measure exists, it is the Dirac measure at a maximizing neutral fixed point $\xi\in\{0,1\}$ such that $f^{-1}(\xi)=\{\xi\}$.
    \item[3.] For $\xi\in\{0,1\}$, if $\delta_\xi$ is the unique  $\exp(P(\varphi)- \varphi)$-conformal measure for $f$, then
    \begin{equation}\label{e: eq 1 thm A}
        \sum_{n=0}^{+\infty}\sum_{x\in f^{-n}(\{x_1,\dots,x_{d-1}\})}\exp(S_n\varphi(x)- nP(\varphi))<+\infty.
    \end{equation}
\end{enumerate}
\end{theoalph}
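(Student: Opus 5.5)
Existence: I would use the transfer operator $\mathcal{L}h(x)=\sum_{f(y)=x} e^{\varphi(y)}h(y)$, taken branchwise. Since $f$ is full branch, $\mathcal{L}$ acts on functions on $[0,1]$ that are continuous on each cylinder of $\bigvee_{j<N}f^{-j}\mathcal{P}$. Because of the discontinuities, I would not apply Schauder--Tychonoff directly on $C([0,1])$. Instead I would pass to a symbolic model: the full shift on $d$ symbols with the coding map $\pi$. On this model $\varphi\circ\pi$ is continuous and locally constant near cylinder boundaries up to the $N$-block, and boundary points are doubled, so $\mathcal{L}$ preserves continuous functions. The map $\nu\mapsto \mathcal{L}^*\nu/\mathcal{L}^*\nu(1)$ then has a fixed point $\nu$ with $\mathcal{L}^*\nu=\lambda\nu$. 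Projecting by $\pi$ gives an $e^{\log\lambda-\varphi}$-conformal measure. It remains to show $\log\lambda=P(\varphi)$. I would get $\log\lambda\ge P(\varphi)$ from the variational principle on the shift, which is compactly coded so invariant measures lift. I would get $\log\lambda\le P(\varphi)$ from $\lambda^n=\int\mathcal{L}^n1\,d\nu\le \sup\sum_{|w|=n}e^{S_n\varphi}$, and the exponential growth rate of these cylinder sums is the pressure. I would also check that boundary (doubled) points carry no problem: $\nu$ of the finitely many partition endpoints is handled as in item 2 below.

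Item 1: suppose $\nu(\{x\})>0$. Conformality gives $\nu(\{f^n x\})=e^{nP-S_n\varphi(x)}\nu(\{x\})$. Hyperbolicity, $\sup\frac1n S_n\varphi<P$ for some $n$, iterates to $S_{kn}\varphi\le kn(P-\epsilon)+C$. Then $\nu(\{f^{kn}x\})\ge e^{kn\epsilon-C}\nu(\{x\})\to\infty$, contradicting that $\nu$ is a probability measure.

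Item 2: first I would show the support of a conformal $\nu$ is forward invariant. It is also backward invariant, since $\nu(f(A))=0$ iff $\nu(A)=0$ on injectivity sets. Expansion away from neutral points together with the full-branch structure shows that any open set $U$ with $\nu(U)>0$ has some iterate covering $[0,1]$ except near neutral points. So a non-full support must be a closed totally invariant set that avoids the interior of every branch image. The only candidates are fixed points with no other preimages: neutral points $\xi\in\{0,1\}$ with $f^{-1}(\xi)=\{\xi\}$. This happens when $\xi=0$ lies in $I_1$ and the other branches do not reach $0$ (for full branches, one endpoint branch only). Expanding endpoints are excluded because orbits are repelled, which would force $\nu$ off them. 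Hence $\nu=\delta_\xi$. Conformality at $\xi$ forces $\varphi(\xi)=P(\varphi)$, and $P\ge\varphi(\xi_k)$ for every $k$ (test with $\delta_{\xi_k}$), so $\xi$ is maximizing. Uniqueness holds because at most two such points, $0$ and $1$, exist. If both were conformal Dirac masses, I would use the Dirac masses plus the full support of the measure constructed in the existence step to derive a contradiction. I expect this item to be the main obstacle, in particular the covering argument near neutral points. There I would use that orbits starting off $\xi_k$ leave any neighborhood of it in finite time.

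Item 3: if $\delta_\xi$ is the unique conformal measure, I would build a candidate atom-free measure
$$\mu=\frac{1}{Z}\sum_{n\ge0}\sum_{x\in f^{-n}(\{x_1,\dots,x_{d-1}\})}e^{S_n\varphi(x)-nP}\delta_x,$$
the Patterson--Sullivan style series. If the sum in \eqref{e: eq 1 thm A} diverged, I would instead take a weak limit of normalized truncations with $P$ replaced by $P+s$, $s\downarrow0$. Such a limit is an $e^{P-\varphi}$-conformal measure; its support contains the preimages of interior points, so it is not $\delta_\xi$, contradicting uniqueness. Thus the series converges.
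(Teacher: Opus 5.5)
\textbf{Main gap: the projection step in the existence argument.} The claim ``Projecting by $\pi$ gives an $e^{\log\lambda-\varphi}$-conformal measure'' is false in general. Your eigenmeasure $\nu$ on the symbolic model (equivalently, on the extension $X$ of Lemma~\ref{extension lemma}) may charge the doubled points over $Y=\bigcup_{n\ge0} f^{-n}(\{x_1,\dots,x_{d-1}\})$. At those points the two systems disagree. In $X$ one has $x_k^+\mapsto 0$ and $x_k^-\mapsto 1$, whereas in $[0,1]$ the point $x_k$ has a single image, and $\varphi(y)$ need not equal $\widetilde\varphi(y^{\pm})$.

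For example, take $d=2$ and $x_1\in I_1$, so that $\widetilde f^{-1}(0)=\{0,x_1^+\}$. Suppose $P(\varphi)=\varphi(0)$ and the tree series converges. Then the atomic measure on the backward tree of $0$ in $X$ is an eigenmeasure, and it has an atom at $x_1^+$. Its projection charges $x_1$ but not $f(x_1)=1$, so it is not conformal. Deferring this to item 2 does not help, because item 2 starts from a measure that is already conformal for $f$.

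What is missing is the paper's dichotomy:
\begin{itemize}
\item If $\nu(\pi^{-1}(Y))=0$, the projection is conformal, and it is fully supported by topological exactness.
\item Otherwise, pushing an atom of $\pi^{-1}(Y)$ forward gives an atom at some $\xi\in\{0,1\}$. Hence $P(\varphi)=\varphi(\xi)$. Since $\varphi$ and $\widetilde\varphi$ disagree only over the finite set $Y_N$, $S_n\varphi$ and $S_n\widetilde\varphi$ differ by at most $NM$ along the tree, so the series \eqref{e: eq 1 thm A} converges.
\item In that second case the conformal measure for $f$ is built by hand: it is the atomic measure of Lemma~\ref{lemma atomic conformal measure} if $f^{-1}(\xi)\neq\{\xi\}$, and $\delta_\xi$ otherwise.
\end{itemize}

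\textbf{Item 3.} The same dichotomy proves item 3; your Patterson--Sullivan route does not. As $s\downarrow 0$ the normalizations blow up, so every point of the tree receives vanishing mass. Nothing then prevents the weak limit from concentrating on $\{0,1\}$, possibly on $\xi$ itself, so ``its support contains the preimages'' is unjustified. Moreover, conformality passes to weak limits only on $X$, where $\widetilde f$ and $\widetilde\varphi$ are continuous, which brings back the projection problem. (Also, the measure you display is purely atomic, not atom-free.)

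Instead, argue as follows. If \eqref{e: eq 1 thm A} diverged, no conformal measure on $X$ could charge $\{0,1\}$, since an atom there forces summability as above. Hence none could charge $\pi^{-1}(Y)$. The projection would then be a fully supported conformal measure for $f$, contradicting uniqueness of $\delta_\xi$.

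\textbf{Item 2.} ``Expanding endpoints are excluded because orbits are repelled'' is not an argument. $\delta_\xi$ is conformal exactly when $f^{-1}(\xi)=\{\xi\}$ and $P(\varphi)=\varphi(\xi)$. So showing that $\xi$ is neutral requires the strict inequality $P(\varphi)>\varphi(\xi)$ at non-neutral fixed points, which is the content of the Key Lemma (Lemma~\ref{key lemma}).

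Uniqueness is elementary, and your sketch would not produce a contradiction anyway: each $x_k$ maps to $0$ or $1$, so $f^{-1}(0)=\{0\}$ and $f^{-1}(1)=\{1\}$ cannot both hold.

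The reduction to $\operatorname{supp}\nu\subseteq\{0,1\}$ should also run the other way round. If $\nu(U)=0$ for an open $U$, take a cylinder $Q\subset U$ with $f^N(\operatorname{int}Q)=(0,1)$; such $Q$ exists by Lemma~\ref{diam to 0}, so neutral points cause no trouble. Conformality then gives $\nu((0,1))\le\int_Q e^{NP(\varphi)-S_N\varphi}\,d\nu=0$.

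Item 1 agrees with the paper.
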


Given $\varphi\in BH^{\gamma}(f)$ for some $\gamma\in(0,1]$, we define the \emph{transfer operator} $\mathcal{L}_{\varphi}$ acting on the space of bounded complex-valued functions on $[0,1]$ by
\begin{equation}\label{eq transfer operator 3}
    \mathcal{L}_{\varphi}h (x) := \sum_{y\in f^{-1}(x)}\exp(\varphi(y))h(y).
\end{equation}

\begin{theoalph}\label{general theo A}
Let $\gamma\in(0,1]$, and let $\varphi \in BH^{\gamma}(f)$ be hyperbolic for $f$. Let $m$ be the atom-free $\exp(P(\varphi)-\varphi)$-conformal measure given by Theorem \ref{t: conformal measure}. Then there exists $A>0$ such that, for the space $H^{\gamma,1}(m)$ defined in Subsection \ref{subseq: Keller spaces} with $I=[0,1]$, the following properties hold.
\begin{enumerate}
    \item[1.] The operator $\mathcal{L}_{\varphi}$ maps $H^{\gamma,1}(m)$ to itself, and $\mathcal{L}_{\varphi}|_{H^{\gamma,1}(m)}$ is bounded. Moreover, the number $\exp(P(\varphi))$ is an eigenvalue of algebraic multiplicity $1$ of $\mathcal{L}_{\varphi}|_{H^{\gamma,1}(m)}$, and there exists $\rho\in (0,1)$ such that the spectrum of $\mathcal{L}_{\varphi}|_{H^{\gamma,1}(m)}$ is contained in $B(0,\rho\exp(P(\varphi)))\cup \{\exp(P(\varphi))\}$.

    \item[2.] There exists a unique equilibrium state $\mu$ for the potential $\varphi$. Moreover, this measure is absolutely continuous with respect to $m$, and its entropy is strictly positive. Finally, there exists a constant $C>0$ such that for every integer $n\geq 1$, every bounded measurable function $\phi\colon[0,1]\to \C$, and every $\psi\in H^{\gamma,1}(m)$, we have
    \begin{equation}\label{eq exp decay}
        \left|\int \phi\circ f^n \cdot \psi \dd\mu-\int \phi \dd\mu \int \psi \dd\mu\right|
        \leq C\lVert \phi\rVert_{\infty} \lVert \psi \rVert_{\gamma,1} \rho^n.
    \end{equation}

    \item[3.] For every branch Hölder-continuous function $\chi\colon [0,1]\to \R$ of exponent $\overline{\gamma}\in (0,1]$, the function $t\mapsto P(\varphi +t \chi)$ is real analytic on a neighborhood of $t=0$.
\end{enumerate}
\end{theoalph}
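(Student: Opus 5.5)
We follow the route of \cite{Juan1} (Keller--Liverani style quasi-compactness on $H^{\gamma,1}(m)$). Normalize $\widetilde{\mathcal L}:=e^{-P(\varphi)}\mathcal L_\varphi$, so the conformality of $m$ gives $\int \widetilde{\mathcal L}h\,dm=\int h\,dm$, i.e.\ $\widetilde{\mathcal L}^*m=m$. In particular $\|\widetilde{\mathcal L}^n h\|_1\le\|h\|_1$. The core estimate is a Lasota--Yorke inequality
\[
|\widetilde{\mathcal L}^n h|_{\gamma,1}\le C\theta^n|h|_{\gamma,1}+C_n\|h\|_1,\qquad \theta<1,
\]
for a suitable iterate $n$.

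To obtain it, we write $\widetilde{\mathcal L}^n h=\sum_{P}(\mathbf 1_{f^nP}\, g_n h)\circ (f^n|_P)^{-1}$, summed over cylinders $P\in\bigvee_{j<n}f^{-j}\mathcal P$, where $g_n=\exp(S_n\varphi-nP(\varphi))$. Since $f$ is full branch, each $f^nP=[0,1]$ up to endpoints. Keller's oscillation calculus then bounds $\operatorname{osc}_1$ of each summand by three contributions:
\begin{itemize}
\item a term $\sup g_n$ times the oscillation of $h$ along the inverse branch;
\item a term from the oscillation of $g_n$, controlled by bounded distortion;
\item boundary terms from the endpoints of $P$.
\end{itemize}
Because $m$ is conformal, the pseudometric $d$ transforms exactly by $g_n^{-1}$ on each branch. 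Hence $B_d$-balls pull back to $B_d$-balls of radius $\sim \varepsilon\, g_n$, and the first contribution summed over branches is at most $\sup_P g_n$ times a factor $\varepsilon^{\gamma}$ rescaled. By \eqref{equation hyperbolicity introduction}, $\sup g_n\le e^{-n\delta}$ for $n$ large, which yields the contraction $\theta$.

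\textbf{Distortion (main obstacle).} The potential is only branch Hölder, and near a neutral fixed point $\xi_k$ the cylinders shrink polynomially, not exponentially. We need $\sup_{y,y'\in P}|S_n\varphi(y)-S_n\varphi(y')|\le C\, d(f^ny,f^ny')^{\gamma}$ with $C$ uniform in $n$, measured in the $m$-pseudometric. The plan is to use the conformality of $m$ to express $d$-distances through $g_n$, and then to bound the Hölder sums along orbits spending long times near $\xi_k$ using the asymptotics in \ref{itm P2} and the exponent $\min\{1,\alpha_k\}$. Alternatively, one can pass to a first-return (inducing) scheme away from neighborhoods of the $\xi_k$, where branches are uniformly expanding, and control the tails with hyperbolicity. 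This step should also fix the choice of $A$.

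\textbf{Spectral consequences.} The inclusion $H^{\gamma,1}(m)\hookrightarrow L^1(m)$ is compact by Keller's theorem, so Hennion's theorem gives quasi-compactness with essential spectral radius at most $\theta$. The peripheral spectrum is handled as follows.
\begin{itemize}
\item $1$ is an eigenvalue, because $m$ is an eigenfunctional; a fixed density $h_0\ge0$ with $\int h_0\,dm=1$ is obtained as a limit of Cesàro averages of $\widetilde{\mathcal L}^n\mathbf 1$.
\item Because of the full-branch structure, $m$ is fully supported (Theorem \ref{t: conformal measure}), and since $m$ is atom-free, $h_0$ is bounded below on intervals. This makes $\widetilde{\mathcal L}$ exact, which rules out other unimodular eigenvalues and Jordan blocks, giving algebraic multiplicity $1$ and the spectral gap of item~1.
\end{itemize}
Setting $\mu=h_0m$, the decay \eqref{eq exp decay} follows from $\int\phi\circ f^n\psi\,d\mu=\int\phi\,\widetilde{\mathcal L}^n(\psi h_0)\,dm$, once we check that $H^{\gamma,1}(m)$ is closed under multiplication by $h_0$ (an algebra-type bound using $h_0\in L^\infty$).

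\textbf{Equilibrium state.} That $\mu$ is an equilibrium state follows from Rokhlin's formula, $h_\mu=\int\log(\text{Jacobian})\,d\mu=\int(P(\varphi)-\varphi)\,d\mu$, using that $\mathcal P$ is generating. Positive entropy follows because $h_\mu=P(\varphi)-\int\varphi\,d\mu>0$ by hyperbolicity. For uniqueness, we show that any equilibrium state $\nu$ must have Jacobian $e^{P-\varphi}h_0\circ f/h_0$, via the standard Ledrappier argument. This forces $\nu\ll m$, and ergodicity then gives $\nu=\mu$. Equilibrium states supported on the neutral fixed points are excluded because their free energy is $\varphi(\xi_k)<P(\varphi)$, by \eqref{eq equivalence of hyperbolicity}.

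\textbf{Analyticity.} For item~3, we work on the common refinement partition so that $\chi$ and $\varphi$ are both branch Hölder of exponent $\min\{\gamma,\overline\gamma\}$. The map $t\mapsto\mathcal L_{\varphi+t\chi}$ is analytic in operator norm on $H^{\gamma,1}(m)$, since multiplication by $e^{t\chi}$ is a convergent series. Hyperbolicity of $\varphi+t\chi$ persists for small $|t|$. Simple isolated eigenvalue perturbation theory then gives an analytic leading eigenvalue $\lambda(t)$, and the remaining point is to identify $\lambda(t)=e^{P(\varphi+t\chi)}$. For this we use the relation $P=\lim\frac1n\log\|\mathcal L^n_{\varphi+t\chi}\mathbf 1\|_1$, or equivalently the conformal measure of Theorem \ref{t: conformal measure} for the perturbed potential.
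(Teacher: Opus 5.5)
\paragraph{The distortion step.} You single out a uniform distortion bound as the main obstacle, and that is where the plan breaks. The estimate $\sup_{y,y'\in P}|S_n\varphi(y)-S_n\varphi(y')|\le C\,d(f^ny,f^ny')^{\gamma}$, with $C$ independent of $n$, is false in this setting.

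\begin{itemize}
\item Take $P$ to be the $n$-cylinder $h_k^n([0,1])$ that contains a neutral fixed point $\xi_k$. For $y\in(x^k_{n+1},x^k_n)\subset P$, Lemma \ref{Lemma M-P estimates} gives $|f^jy-\xi_k|\asymp (n-j)^{-1/\alpha_k}$.
\item Suppose $\varphi-\varphi(\xi_k)$ behaves like $-|x-\xi_k|^{\gamma}$ near $\xi_k$. Then $|S_n\varphi(y)-S_n\varphi(\xi_k)|\asymp\sum_{i=1}^{n} i^{-\gamma/\alpha_k}$, which is unbounded when $\gamma\le\alpha_k$, while $d\le 1$.
\item A concrete instance is the potential of the Example with $\gamma_1\le\alpha_1$, which is hyperbolic.
\end{itemize}

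This is exactly why Lemma \ref{lemma bound Df^n} only controls cylinders with $k_{n+1}\neq k_n$ or $k_{n+1}\in E(f)$.

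The missing observation is that no distortion control is needed at all. Keller's Theorem \ref{keller theo} requires only three things:
\begin{itemize}
\item $g=\exp(\varphi-P(\varphi))$ has bounded $1/\gamma$-variation, which is automatic for $\varphi\in BH^\gamma(f)$;
\item $\sup g_N<1$ for some $N$, which is precisely hyperbolicity;
\item H1--H3.
\end{itemize}
In the Lasota--Yorke inequality for a \emph{fixed} iterate $N$, the contraction comes from $\sup g_N$. The finite variation of $g_N$ only affects constants that are allowed to depend on $N$.

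The paper's proof consists of verifying these hypotheses:
\begin{itemize}
\item H1 and H2 follow from conformality;
\item H3 follows from $m(Y)=0$, by lifting $m$ to the extension of Lemma \ref{extension lemma};
\item Corollary \ref{corollary keller theo} is then applied, using topological exactness on $(0,1)$ and $m((0,1))=1$.
\end{itemize}
Your inducing alternative is an unnecessary detour. It changes the operator, whereas item 1 is a statement about $\mathcal L_\varphi$ itself on $H^{\gamma,1}(m)$, so you would still have to transfer the gap back.

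\paragraph{Uniqueness.} Knowing that an equilibrium state $\nu$ has Jacobian $e^{P(\varphi)-\varphi}\,h_0\circ f/h_0$ does not give $\nu\ll m$.
\begin{itemize}
\item This step already needs an everywhere-defined version of $h_0$ bounded away from $0$, since $\nu$ may a priori be singular with respect to $m$.
\item Even granting it, you only learn that $h_0^{-1}\nu$, normalized, is another $\exp(P(\varphi)-\varphi)$-conformal measure.
\item Theorem \ref{t: conformal measure} gives no uniqueness of such measures in the hyperbolic case, so absolute continuity does not follow.
\end{itemize}

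The ingredient you are missing is positivity of the Lyapunov exponent. Every ergodic equilibrium state satisfies $h_\nu\ge P(\varphi)-\sup\frac1N S_N\varphi>0$. Hence $\chi_\nu\ge h_\nu>0$ by Ruelle's inequality for piecewise monotone maps. The Ledrappier-type result \cite[Theorem 6]{Dobbs} then gives $\nu\ll m$. Part 3 of Theorem \ref{keller theo} gives $\nu\ll\mu$, and ergodicity gives $\nu=\mu$.

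\paragraph{Analyticity.} Your identification of the perturbed leading eigenvalue with $e^{P(\varphi+t\chi)}$, via $\frac1n\log\|\mathcal L^n_{\varphi+t\chi}\mathbf 1\|_{L^1(m)}$ and Lemma \ref{tree pressure}, is sound. It even avoids switching reference measures. However, it must be carried out on $H^{\widehat\gamma,1}(m)$ with $\widehat\gamma\le\min\{\gamma,\overline\gamma\}$, not on $H^{\gamma,1}(m)$. Otherwise multiplication by $e^{t\chi}$ need not preserve the space.
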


\begin{rema}
The behavior of the transfer operator $\mathcal{L}_\varphi$ described in part 1 of Theorem \ref{general theo A} is known as a \emph{spectral gap} and has strong consequences. Besides exponential decay of correlations, other stochastic properties, such as the central limit theorem and the almost sure invariance principle, can also be deduced. For further details, see \cite[Theorem 3.3]{Keller1}.
\end{rema}

The following is the main result of the paper. It gives a description of the possible phase transitions for the family $\beta \varphi$: either the family remains hyperbolic for all $\beta>0$, or there is a critical parameter $\beta_{*}$ after which the pressure becomes affine and the equilibrium states are supported on maximizing neutral fixed points. Here, $|\cdot|$ denotes cardinality.

\begin{theoalph}\label{Theo A}
Let $\gamma\in(0,1]$ and let $\varphi\in BH^\gamma(f)$. The following dichotomy holds.
\begin{enumerate}
    \item[1.] Either $\beta\varphi$ is hyperbolic for every $\beta>0$, and hence the conclusions of Theorem \ref{general theo A} hold for every $\beta>0$;

    \item[2.] or there exists $\beta_0>0$ such that
    \[
    P(\beta_0\varphi)=\beta_0\max_{k\in N(f)}\varphi(\xi_k),
    \]
    and for
    \begin{equation}\label{eq 1 thm C}
        \beta_* := \inf\left\{\beta>0: P(\beta\varphi) = \beta\max_{k\in N(f)}\varphi(\xi_k)\right\},
    \end{equation}
    the following hold.
    \begin{enumerate}
        \item[(a)] For every $\beta\in(0,\beta_*)$, the potential $\beta\varphi$ is hyperbolic.

        \item[(b)] For every $\beta\geq \beta_*$, we have
        \begin{equation}\label{eq 2 thm 1}
            P(\beta\varphi)=\beta\max_{k\in N(f)}\varphi(\xi_k).
        \end{equation}
        Furthermore,
        \begin{enumerate}
            \item[(i)] the potential $\beta_*\varphi$ admits at most $|\Nmax|+1$ ergodic equilibrium states, $|\Nmax|$ of which are supported on maximizing neutral fixed points, while the possible additional one has positive entropy, is absolutely continuous with respect to a fully supported $\exp(P(\beta_*\varphi)-\beta_{*}\varphi)$-conformal measure and its density is bounded from below by a positive constant;

            \item[(ii)] for every $\beta>\beta_*$, the potential $\beta\varphi$ has exactly $|\Nmax|$ ergodic equilibrium states, each supported on a maximizing neutral fixed point.
        \end{enumerate}

        \item[(c)] The pressure function $\beta\mapsto P(\beta\varphi)$ is real analytic on $(0,+\infty)\setminus\{\beta_*\}$.
    \end{enumerate}
\end{enumerate}
\end{theoalph}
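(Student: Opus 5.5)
The plan is to study the convex function $p(\beta):=P(\beta\varphi)-\beta M$, where $M:=\max_{k\in N(f)}\varphi(\xi_k)$, and to combine its monotonicity with Theorems \ref{t: conformal measure} and \ref{general theo A}.

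\textbf{Step 1: basic properties of $p$.} Since $\delta_{\xi_k}$ is invariant, the variational principle gives $P(\beta\varphi)\ge \beta\varphi(\xi_k)$, so $p\ge 0$. The map $\beta\mapsto P(\beta\varphi)$ is convex and finite, because $\varphi$ is bounded and the topological entropy is $\log d$. Hence $p$ is convex, nonnegative and continuous. The set $Z:=\{\beta>0:p(\beta)=0\}$ is therefore closed in $(0,\infty)$. I also claim $Z$ is an up-set. If $\beta_0\in Z$ and $\beta>\beta_0$, convexity on $[\beta',\beta]$ with some $\beta'<\beta_0$ gives $0=p(\beta_0)\le$ a convex combination of $p(\beta')\ge0$ and $p(\beta)\ge0$. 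This alone is not enough. Instead I compare slopes: $p$ is convex and $p(0)=\log d>0=p(\beta_0)$, so $p$ is nonincreasing at $\beta_0$, i.e.\ its right derivative there is at most $0$. Convexity then makes $p$ nonincreasing on $[\beta_0,\infty)$, and together with $p\ge0$ this forces $p\equiv0$ there. Combined with the characterization \eqref{eq equivalence of hyperbolicity} (Proposition \ref{prop characterization of hyperbolicity}), this gives the dichotomy:
\begin{itemize}
\item if $Z=\emptyset$, then $\beta\varphi$ is hyperbolic for all $\beta>0$ and Theorem \ref{general theo A} applies;
\item otherwise $Z=[\beta_*,\infty)$ with $\beta_*>0$, which proves (a) and \eqref{eq 2 thm 1}.
\end{itemize}
That $\beta_*>0$ follows from continuity, since $p(\beta)\to\log d>0$ as $\beta\to0$.

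\textbf{Step 2: analyticity, part (c).} On $(0,\beta_*)$, Theorem \ref{general theo A}(3) with $\chi=\varphi$ gives local analyticity of $P$. On $(\beta_*,\infty)$ the pressure is affine, hence analytic.

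\textbf{Step 3: equilibrium states for $\beta\ge\beta_*$.} Each $\delta_{\xi_k}$ with $k\in\Nmax$ is an equilibrium state. Let $\mu$ be any other ergodic equilibrium state. Since $\mu$ is not supported on a maximizing neutral fixed point, it should have positive entropy: a zero-entropy ergodic measure that is not one of these Diracs has $\int\varphi\,d\mu<M$ unless it charges a neutral fixed point with $\varphi(\xi_k)=M$. I would justify this through the hyperbolicity characterization, which forces $\sup\frac1n S_n\varphi\to M$ only near maximizing neutral points, so any non-atomic ergodic measure $\nu$ has $\int\varphi\,d\nu<M$.

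For (ii), take $\beta>\beta_*$ and such a $\mu$ with $h_\mu>0$. Then
\[
P(\beta_*\varphi)\ge h_\mu+\beta_*\int\varphi\,d\mu=\beta_*M+(\beta-\beta_*)\Bigl(M-\int\varphi\,d\mu\Bigr)>\beta_*M,
\]
using $h_\mu+\beta\int\varphi\,d\mu=\beta M$ and $\int\varphi\,d\mu<M$. This contradicts $p(\beta_*)=0$.

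For (i), at $\beta_*$ I need to show there is at most one positive-entropy ergodic equilibrium state, and that it is absolutely continuous with respect to a fully supported conformal measure $m$ with density bounded below. Here I would take the fully supported $\exp(P-\beta_*\varphi)$-conformal measure $m$ from Theorem \ref{t: conformal measure}; by part 2 of that theorem, every other conformal measure is a Dirac mass at an endpoint. Then I would induce on the complement of neighborhoods of the neutral points, i.e.\ on a first-return map to a union of cylinders avoiding all $\xi_k$. This map is uniformly expanding with a Markov/full-branch structure. Uniqueness of the equilibrium state for the induced potential $\beta_*\varphi-P$ (Gibbs/Sarig theory) gives uniqueness among positive-entropy measures that lift, and Abramov/Zweimüller lifting gives a density bounded below by the Gibbs property.

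\textbf{Main obstacle.} I expect the main difficulty to be this last step: showing that every positive-entropy equilibrium state lifts to the induced system and is absolutely continuous with respect to $m$. This needs the conformal measure $m$ to give finite mass to return times, using the summability estimates behind \eqref{e: eq 1 thm A}.
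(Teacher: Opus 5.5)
The central gap is in part (i). You never prove that a positive-entropy equilibrium state $\mu$ of $\beta_*\varphi$ is absolutely continuous with respect to a fully supported conformal measure. You flag this as the main obstacle, and the inducing scheme you outline is not carried out. It would require summable variations and BIP for $S_\tau(\beta_*\varphi)-\tau P(\beta_*\varphi)$ on a first-return map, control of its pressure (including induced measures with $\int\tau=\infty$), and identification of the induced conformal measure with $m$ restricted to the base.

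The measure $m$ is also misattributed. Theorem \ref{t: conformal measure} only gives existence of some conformal measure, and part 2 says there is at most one \emph{non}-fully supported one. So that theorem by itself allows the only conformal measure to be $\delta_\xi$ with $\xi\in\{0,1\}$, in which case no fully supported $m$ exists. Moreover, a fully supported conformal measure may be purely atomic (Lemma \ref{lemma atomic conformal measure}), and then $\mu\ll m$ is impossible.

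The paper resolves both issues with one tool, Dobbs' Theorem 6.
\begin{itemize}
\item Proposition \ref{p: before proof thm C} uses it to show that a positive-entropy equilibrium state forces divergence of the series \eqref{e: eq 1 thm A}. By part 3 of Theorem \ref{t: conformal measure}, this yields a fully supported conformal measure $m_{\beta_*}$.
\item Ruelle's inequality gives $\chi_\mu\ge h_\mu>0$. Dobbs' Theorem 6 then gives $\mu\ll m_{\beta_*}$ with density $h$ bounded below on an open interval $W$.
\item Since $\mathcal{L}_{\beta_*\varphi}h=e^{P(\beta_*\varphi)}h$ and $f$ is topologically exact on $(0,1)$, the lower bound propagates to all of $[0,1]$. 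Hence $\mu$ and $m_{\beta_*}$ are equivalent.
\item Two distinct such ergodic measures would then be equivalent to each other, contradicting mutual singularity.
\end{itemize}
No inducing or Sarig-type uniqueness is needed.

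Two other steps are unjustified.

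First, you claim every ergodic equilibrium state other than the $\delta_{\xi_k}$, $k\in N_{\max}(f)$, has positive entropy. This does not follow from Proposition \ref{prop characterization of hyperbolicity}. That result concerns $\sup_\nu\int\varphi\,d\nu$, not whether a given zero-entropy measure attains the value $M$. Your restriction to non-atomic measures also overlooks periodic-orbit measures. What you need is exactly the Key Lemma (Lemma \ref{key lemma}): for $\mu\notin\mathcal{S}$ one has $P(\beta\varphi)>\beta\int\varphi\,d\mu$, so $h_\mu>0$. With that in hand, your computation for (ii) is correct.

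Second, in Step 1 the inference ``$p$ convex and $p(0)>p(\beta_0)=0$, hence the right derivative at $\beta_0$ is $\le 0$'' is false. A counterexample is $p(\beta)=\frac{\log d}{\beta_0}|\beta-\beta_0|$; in general, at a minimum of a convex function the right derivative is $\ge 0$. The correct argument is that $\beta\mapsto P(\beta\varphi)/\beta=\sup_\nu\{h_\nu/\beta+\int\varphi\,d\nu\}$ is nonincreasing. Together with $P(\beta\varphi)\ge\beta M$, this gives $Z=[\beta_*,\infty)$.
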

\begin{exam}
    Let $\alpha_1,\alpha_3>0$ and consider the map $f\colon[0,1]\to [0,1]$ given by
    \begin{equation}
        f(x) \= \begin{cases}
            x+ 2\cdot 3^{\alpha_1}x^{1+\alpha_1}, & x\in [0,1/3],\\[3pt]
            3x-1, & x\in(1/3,2/3),\\[3pt]
            x-2\cdot 3^{\alpha_3}(1-x)^{1+\alpha_3}, & x\in [2/3,1].
        \end{cases} 
    \end{equation}
    The map $f$ belongs to $\mathscr{F}$, with 
    \begin{equation*}
        I_1 = [0,1/3],\; I_2=(1/3,2/3),\; I_3 = [2/3,1],
    \end{equation*}
    and
    \begin{equation*}
        N(f) = \{1,3\},\; E(f)=\{2\}.
    \end{equation*}
     The neutral fixed points are $\xi_1=0$ and $\xi_3=1$, and $\xi_2=1/2$ is a fixed point corresponding to a uniformly expanding branch. 
     
     Let $\gamma_1,\gamma_2$, and $\gamma_3$ be positive numbers. Define $\varphi\colon[0,1]\to \R$ by 
    \begin{equation*}
        \varphi(x) \= -x^{\gamma_1}|x-1/2|^{\gamma_2}(1-x)^{\gamma_3}.
    \end{equation*}
   The function $\varphi$ is Hölder-continuous of exponent $\gamma\= \min\{1,\gamma_1,\gamma_2,\gamma_3\}$ and hence belongs to $BH^\gamma(f)$. By Lemma \ref{key lemma},
    \begin{equation*}
        P(\beta\varphi) > \int \beta\varphi \dd \delta_{1/2} = \beta\varphi(1/2)=0 \text{ for every }\beta>0.
    \end{equation*}
   Since $\sup_{x\in[0,1]}\varphi(x) =0$, we have
   \begin{equation*}
       P(\beta\varphi) > \sup_{x\in[0,1]} \beta \varphi(x) \text{ for every }\beta>0.
   \end{equation*}
   That is, for each $\beta>0$ the potential $\beta\varphi$ is hyperbolic for $f$, and therefore it verifies part 1 of Theorem \ref{Theo A}.
    
    Now, consider the geometric potential $-\log Df$, where $Df(1/3)$ and $Df(2/3)$ denote the left and right derivatives, respectively. The geometric potential belongs to $BH^{\min\{1,\alpha_1,\alpha_3\}}(f)$. In  \cite{CoronelMonardes} we show that
    \begin{equation*}
         P(-\beta\log Df) > 0 \text{ for }\beta\in(0,1),
    \end{equation*}
    and
    \begin{equation*}
         P(-\beta\log Df) =0 \text{ for }\beta\in [1,+\infty). 
    \end{equation*}
    In particular,
    \begin{equation*}
        P(-\log Df) = -\log Df(0) = -\log Df(1).
    \end{equation*}
    Thus, the geometric potential satisfies condition 2 in Theorem \ref{Theo A} with $\beta_* = 1$.
    Moreover, we show that $-\log Df$ admits an equilibrium state with positive entropy if and only if $\max\{\alpha_1,\alpha_3\} \in (0,1)$.
\end{exam}

\subsection{Outline of the paper}

The remainder of the paper is organized as follows.

Since the intermittent maps and potentials considered here are allowed to be discontinuous, we first develop some machinery that enables us to apply standard results from the theory of continuous dynamical systems on compact spaces. To this end, Section \ref{sec: geometric estimates and top pressure} begins by establishing several geometric estimates for maps in the family $\sF$ (Section \ref{subseq: Basic estimates}). In Section \ref{subseq: continuous extension} we construct a continuous extension of our system (Lemma \ref{extension lemma}). This extension is the main tool used to handle discontinuities in both the map and the potential. It follows a well-known procedure described in \cite[Appendix A.5]{Keller2}, which was also applied in \cite{coronelRivera1} to the Manneville--Pomeau family in a manner similar to the one used here. Using this extension, we obtain a topological characterization of the pressure of branch continuous potentials (Lemma \ref{tree pressure}).

\Cref{sec: proof of theo A} is devoted to the proof of \Cref{t: conformal measure}. Given a branch continuous potential $\varphi$, let $\widetilde{\varphi}$ denote its extension constructed in Section \ref{sec: geometric estimates and top pressure}. We first construct an $\exp(P(\widetilde{\varphi},\widetilde{f})-\widetilde{\varphi})$-conformal measure. We then describe a mechanism for constructing atomic $\exp(P(\varphi)-\varphi)$-conformal measures when \eqref{eq equivalence of hyperbolicity} fails. Finally, we show that atomic conformal measures are incompatible with hyperbolic potentials. Combining these results yields the proof of Theorem \ref{t: conformal measure}.

\Cref{sec: Keller spaces} collects several properties of the Banach spaces introduced in Section \ref{subseq: Keller spaces}. The main results are Theorem \ref{keller theo} and Corollary \ref{corollary keller theo}, proved in \cite{Keller1} and \cite{Juan1}, respectively. These results show that the transfer operator has a spectral gap, which implies the existence of an equilibrium state with exponential decay of correlations, as well as analyticity properties of the pressure. We refer to \cite{Keller1} and \cite{Juan1} for proofs and further details.

In \Cref{sec: Proof of Theo B and C} we prove Theorems \ref{general theo A} and \ref{Theo A}. The former follows from the results in Section \ref{sec: Keller spaces}, together with \cite[Theorem 6]{Dobbs}; our argument follows the same general structure as the proof of \cite[Theorem B]{Juan1}, with only minor modifications. Unlike \cite{Juan1}, where the potential is replaced by a cohomologous averaged potential in order to reduce to the condition $\sup\varphi<P(\varphi)$, we work directly with \eqref{equation hyperbolicity introduction}. The proof of \Cref{Theo A} proceeds as follows. We show that condition \eqref{eq equivalence of hyperbolicity} is equivalent to hyperbolicity of $\varphi$, so parts 1 and 2(a) follow directly from \Cref{general theo A}. The identity \eqref{eq 2 thm 1} for $\beta\geq \beta_*$, as well as the existence of $|\Nmax|$ ergodic equilibrium states supported on maximizing neutral fixed points, are direct consequences of \eqref{pressure}. The Key Lemma (Lemma \ref{key lemma}) implies that any other ergodic equilibrium state must have positive entropy. The existence of a fully supported $\exp(P(\beta_*\varphi)-\beta_*\varphi)$-conformal measure given by Proposition \ref{p: before proof thm C} then allows us to apply \cite[Theorem 6]{Dobbs} and conclude that such an equilibrium state must be equivalent to such $\exp(P(\beta_*\varphi)-\beta_*\varphi)$-conformal measure. This yields part 2(b). Finally, part 2(c) follows from part 3 of \Cref{general theo A} together with \eqref{eq 2 thm 1}.

The paper concludes with the proof of the Key Lemma (Lemma \ref{key lemma}) in Appendix \ref{sec: key lemma}.

\subsection{Notes and comparison with related literature}
The class of maps considered here overlaps nontrivially with the interval maps studied in \cite{zweimuller1,zweimuller2,MelbourneTerhesiu}. These works focus on the infinite-measure regime and investigate the structure and statistical properties of absolutely continuous invariant measures. By contrast, we study the thermodynamic formalism for a broad class of branch Hölder-continuous potentials and the associated probability equilibrium states.

The study of interval maps with multiple neutral fixed points has seen significant recent progress (\cite{coatesGelfert,coatesMelbourne,CoatesMelbourneTalebi,coatesLuzzatto,coatesLuzzattoMubarak}). We would like to emphasize that, despite some similarities, our setting differs substantially from those considered in these recent works.

One major difference concerns the regularity of the branches. While most works assume $C^2$ regularity, we only require the branches to be of class $C^{1}$ with Hölder-continuous derivative. Another difference concerns both the number of branches and the behavior near the neutral fixed points. We allow an arbitrary finite number of branches, and we do not assume that all neutral fixed points have the same degree of ``stickiness'', in contrast with the Thaler maps considered in \cite{CoatesMelbourneTalebi} and the maps studied in \cite{coatesGelfert}. Although \cite{coatesLuzzatto} and \cite{coatesLuzzattoMubarak} allow different degrees of stickiness at neutral fixed points, they only consider maps with two branches.

We also allow for the coexistence of uniformly expanding and intermittent branches. This gives a different level of flexibility from the Thaler maps considered in \cite{CoatesMelbourneTalebi}, where all branches are of an intermittent nature.

On the other hand, one of the main limitations of our approach concerns the behavior near the discontinuity points. We require the derivative of our maps to admit continuous extensions at the discontinuity points and to be at least 1. This is a loss of generality compared to \cite{coatesLuzzatto} and \cite{coatesLuzzattoMubarak}, where the derivative is allowed to tend to zero or to infinity near the discontinuities.

We note that our results are known when $f$ belongs to the Manneville--Pomeau family and the potential $\varphi$ is Hölder-continuous. More precisely, Theorems \ref{t: conformal measure} and \ref{general theo A} follow from Theorems A.1 and A.2 in \cite{InoquioRivera2}, respectively, while part of Theorem \ref{Theo A} follows from \cite[Proposition 2.10]{coronelRivera1}.

Finally, a crucial ingredient in some of our arguments is the Key Lemma (Lemma \ref{key lemma}). This result has been proved and used in several settings. It was first established in \cite{InoquioRivera} for rational maps and later in \cite{LiHuaibin} for multimodal maps. More recently, it was adapted to the Manneville--Pomeau family in \cite{coronelRivera1}. Since none of these proofs applies directly to our setting, we provide a proof in Appendix \ref{sec: key lemma}.

\section{Geometric estimates and topological pressure}\label{sec: geometric estimates and top pressure}
In this section, we fix a map $f\in\sF$, and for every $k$ in $\{1,\ldots,d\}$ we denote by $h_k$ the continuous extension of $(f|_{I_k})^{-1}$ to the interval $[0,1]$. Recall that $0=x_0<\dots<x_d=1$ were chosen so that
\begin{equation}
    \text{int }I_k=(x_{k-1},x_{k}) \text{ for every } k\in\{1,\ldots,d\}.
\end{equation}
For $k\in \{1,\ldots,d-1\}$, define
\begin{equation}\label{e: defi x_n^k}
    x_n^k\= h_k^n(1) \text{ for every }n\in \N,
\end{equation}
and for $k\in\{2,\ldots,d\}$, define
\begin{equation}\label{e: defi y_n^k}
    y_n^k \= h_k^n (0) \text{ for every }n\in\N.
\end{equation}
It follows that for each $n\in\N$,
\begin{equation*}
    h_1^n([0,1]) = [0,x_{n}^1]\text{ , }h_d^n([0,1]) = [y_{n}^d,1] \text{ and } h_k^n([0,1]) = [y_{n}^k , x_n^k] \text{ for all } k\in\{2,\ldots,d-1\}.
\end{equation*}
Observe that for each $k\in\{1,\ldots,d-1\}$ the sequence $(x_{n}^k)_{n\in\N}$ is decreasing and 
\begin{equation}\label{e: f(x_{n+1}) = x_{n}}
f(x_{n+1}^k)= x_n^k \text{ for every }n\in\N,
\end{equation}
and for each  $k\in\{2,\ldots,d\}$ the sequence $ (y_{n}^k)_{n\in \N}$ is increasing, and we have
\begin{equation}\label{e: f(y_{n+1})= y_{n}}
f(y_{n+1}^k)= y_n^k \text{ for every } n\in\N.
\end{equation}
We also define 
\begin{equation}
\alpha_{\max}\= \max\{\alpha_k: k\in N(f)\}.
\end{equation}
Since $E(f)$ is finite, by Property \ref{itm P3}, there is $\lambda>1$ such that
\begin{equation}\label{eq exp bound}
    Df(x) >\lambda \text{ \;for all\;}x\in I_k, k\in E(f).
\end{equation}
For the rest of this section, we denote by $\mathcal{P}$ the partition $\{I_1,\ldots,I_d\}$.
\subsection{Basic estimates}\label{subseq: Basic estimates}
For each $k\in N(f)$, let $\alpha_k$ and $b_k$ be given by Property \ref{itm P2}.
\begin{lemm}\label{Lemma M-P estimates}
    Let $k\in N(f)$. If $k\in\{1,\ldots,d-1\}$,
    \begin{equation}\label{eq x_n}
     x_n^k - \xi_k \sim (\alpha_kb_kn)^{-1/\alpha_k}, \;\; x_n^k - x_{n+1}^k \sim b_k^{-1/\alpha_k} (\alpha_k n )^{-(1+1/\alpha_k)} \text{\;\; as }n\to+\infty.
    \end{equation}
    and if $k\in \{2,\ldots,d\}$,
    \begin{equation}\label{eq y_n}
       \xi_k - y_{n}^k \sim (\alpha_kb_kn)^{-1/\alpha_k}, \;\;y_{n+1}^k - y_{n}^k \sim b_k^{-1/\alpha_k} (\alpha_k n )^{-(1+1/\alpha_k)} \text{\;\; as }n\to+\infty.
    \end{equation}
    Moreover, there is $C_0>0$ such that for every $n\in \mathbb{N}$, if $k\in N(f)\cap\{1,\ldots,d-1\}$ and $x\in(x^k_{n+1},x^k_n)$, or if $k\in N(f)\cap\{2,\ldots,d\}$ and $x\in(y^k_n,y^k_{n+1})$, we have
    \begin{equation}\label{eq Df^n}
        C_0^{-1}<Df^n(x)n^{-(1+1/\alpha_k)}<C_0.
    \end{equation}
\end{lemm}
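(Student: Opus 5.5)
We prove the statement for $k\in N(f)\cap\{1,\ldots,d-1\}$ and the sequence $(x_n^k)$. The case of $(y_n^k)$ is symmetric: reflect $x\mapsto 1-x$, or repeat the argument on the left side of $\xi_k$. Since $N(f)$ is finite, we may take $C_0$ to be the maximum of the constants obtained for each $k$. Fix $k$ and write $\xi=\xi_k$, $\alpha=\alpha_k$, $b=b_k$, $x_n=x_n^k$.

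\emph{Step 1: the sequence $x_n$ converges to $\xi$ from the right.} By \ref{itm P2}(1), the function $x\mapsto f(x)-x$ has derivative $Df-1>0$ on $I_k\setminus\{\xi\}$. So it is strictly increasing on $I_k$, and $\xi$ is its only zero there. In particular $f(x)>x$ for $x\in I_k$ with $x>\xi$. We have $x_0=1>\xi$, and $h_k$ is increasing with $h_k(\xi)=\xi$. Hence every $x_n$ lies in $(\xi,x_k]$, and by \eqref{e: f(x_{n+1}) = x_{n}} we get $x_n=f(x_{n+1})>x_{n+1}$. The sequence is therefore decreasing and bounded below by $\xi$. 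By continuity of $f_k$, its limit is a fixed point of $f_k$ in $\overline{I}_k$. The monotonicity of $f(x)-x$, extended to the closure, shows this fixed point is $\xi$.

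\emph{Step 2: the asymptotics \eqref{eq x_n}.} Put $u_n:=x_n-\xi\downarrow 0$. By \ref{itm P2}(2),
\[
u_n=u_{n+1}+b\,u_{n+1}^{1+\alpha}(1+o(1)).
\]
Write $u_n^{-\alpha}=u_{n+1}^{-\alpha}\bigl(1+bu_{n+1}^{\alpha}(1+o(1))\bigr)^{-\alpha}$ and expand to first order. This gives
\[
u_{n+1}^{-\alpha}-u_n^{-\alpha}\to\alpha b .
\]
By Cesàro, $u_n^{-\alpha}/n\to\alpha b$, which is the first claim $u_n\sim(\alpha bn)^{-1/\alpha}$. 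For the second claim, $x_n-x_{n+1}=f(x_{n+1})-x_{n+1}\sim b\,u_{n+1}^{1+\alpha}$. Substituting the first asymptotic gives
\[
b(\alpha b n)^{-1-1/\alpha}=b^{-1/\alpha}(\alpha n)^{-(1+1/\alpha)} .
\]

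\emph{Step 3: the derivative bound \eqref{eq Df^n}.} Let $J_n:=(x_{n+1},x_n)$. The map $f^n$ sends $J_n$ diffeomorphically onto $J_0=(x_1,x_0)=(x_k,1)$, an interval of fixed positive length. By the mean value theorem, some point $z\in J_n$ satisfies $Df^n(z)=|J_0|/|J_n|$. By Step 2 this is comparable to $n^{1+1/\alpha}$ for large $n$; for the finitely many remaining $n$ it is a positive finite number. It then suffices to prove bounded distortion: there is $D>0$ such that $|\log Df^n(x)-\log Df^n(y)|\le D$ for all $n$ and all $x,y\in J_n$. Using \ref{itm P2}(3) with exponent $\theta=\min\{1,\alpha\}$, and since $f^j(J_n)=J_{n-j}\subset I_k$ for $0\le j<n$,
\[
|\log Df^n(x)-\log Df^n(y)|\le \sum_{j=0}^{n-1}C|f^jx-f^jy|^{\theta}\le C\sum_{m\ge1}|J_m|^{\theta}.
\]
By Step 2, $|J_m|^{\theta}\asymp m^{-(1+1/\alpha)\theta}$. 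If $\alpha\le1$, the exponent is $1+\alpha>1$. If $\alpha>1$, the exponent is $1+1/\alpha>1$. In both cases the series converges, which gives $D$.

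The main obstacle is this distortion step. It is exactly where the Hölder exponent $\min\{1,\alpha_k\}$ in \ref{itm P2}(3) must be matched against the decay rate of $|J_m|$ from Step 2. The estimates in Step 2 must also be sharp enough to hold uniformly in $n$, not merely asymptotically. The remaining difficulty is routine care: justifying the first-order expansion in Step 2 using only the asymptotic relation $\sim$ rather than an exact formula for $f$.
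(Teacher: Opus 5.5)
Your proof is correct and follows the paper's argument. Step 3 is the same as the paper's proof of \eqref{eq Df^n}: bounded distortion from summing $|J_m|^{\min\{1,\alpha_k\}}\asymp m^{-\min\{1,\alpha_k\}(1+1/\alpha_k)}$ along $f^j(J_n)=J_{n-j}$, then comparison with the fixed length of $f^n(J_n)$. The only difference is in Step 2: the paper obtains the asymptotics \eqref{eq x_n} and \eqref{eq y_n} by citing a sublemma of Coates--Melbourne--Talebi, whereas you prove them directly by telescoping $u_n^{-\alpha}$ and applying Ces\`aro, which makes the argument self-contained.
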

\proof
    First assume that~$k\in\{1,\ldots,d-1\}$. Consider~$g:(I_k-\xi_k)\cap[0,+\infty)\to [0,+\infty)$ given 
    by~$g(x)= f(x+\xi_k) -\xi_k$. Let~$\widetilde{x}_n^k\= x_n^k - \xi_k$ for each~$n\in\N$. By Property \ref{itm P2}, the map $g$ satisfies $g(x)-x\sim b_k x^{1+\alpha_k}$ as $x\to 0^+$. Then by ~\cite[Sublemma 4.4]{CoatesMelbourneTalebi} we have
    \begin{equation*}
         \widetilde{x}_n^k \sim (\alpha_kb_kn)^{-1/\alpha_k}, \;\; \widetilde{x}_n^k - \widetilde{x}_{n+1}^k \sim b_k^{-1/\alpha_k} (\alpha_k n )^{-(1+1/\alpha_k)},
    \end{equation*}
    which proves~\eqref{eq x_n}. Similarly, for~$k\in\{2,\ldots,d\}$ we obtain~\eqref{eq y_n} by replacing $g$ 
    by 
    \begin{equation*}
    \widehat{g}(x) \= \xi_k-f(\xi_k-x).
    \end{equation*}
    We now prove~\eqref{eq Df^n}. We first treat the case $k\in N(f)\cap\{2,\ldots,d-1\}$; the endpoint cases are identical, with only one side of the neutral fixed point present. Since $\log Df|_{I_k}$ is Hölder-continuous of exponent $\min\{1,\alpha_k\}$, there is $D_k >0$ such that for every $x$ and $y$ in $I_k$ 
    \begin{equation}\label{e: logDf holder}
        |\log Df(x) - \log Df(y)|\leq D_k |x-y|^{\min\{1,\alpha_k\}}.
    \end{equation}
    On the other hand, from \eqref{eq x_n} we know that there exists $\widetilde{D}_k>0$ such that for every $n\in \N$
    \begin{equation}\label{e: x_n^k - x_{n+1}^k}
        \widetilde{D}_k^{-1}\leq (x_n^k-x_{n+1}^k) n^{1+1/\alpha_k}\leq \widetilde{D}_k. 
    \end{equation}
    Let 
    \begin{equation*}
        \widehat{D}_k\= D_k\widetilde{D}_k \sum_{j=1}^{+\infty} j^{-\min\{1,\alpha_k\}(1+1/\alpha_k)}.
    \end{equation*}
    Notice that $\widehat{D}_k\in(0,+\infty)$ as $\min\{1,\alpha_k\}(1+1/\alpha_k)>1$. By \eqref{e: logDf holder} and \eqref{e: x_n^k - x_{n+1}^k}, since $f^j((x_{n+1}^k,x_n^k))=(x_{n+1-j}^k,x_{n-j}^k)$ for every $j\in \{0,\ldots,n-1\}$, we obtain that for all $x$ and $y$ in $(x_{n+1}^k,x_n^k)$ we have
    \begin{equation*}
    \begin{split}
        \log\left|\frac{Df^n(x)}{Df^n(y)}\right| &\leq \sum_{j=0}^{n-1}|\log Df(f^j(x))-\log Df(f^{j}(y))| \leq D_k \sum_{j=0}^{n-1}|f^j((x_{n+1}^k,x_n^k))|^{\min\{1,\alpha_k\}}\\
        &\leq  D_k\sum_{j=0}^{n-1} |x_{n-j}^k-x_{n+1-j}^k|^{\min\{1,\alpha_k\}} \leq D_k\widetilde{D}_k \sum_{j=0}^{n-1} (n-j)^{-\min\{1,\alpha_k\}(1+1/\alpha_k)} \leq \widehat{D}_k.
        \end{split}
    \end{equation*}
    Thus, for every $x$ and $y$ in $(x_{n+1}^k,x_n^k)$ we have 
    \begin{equation}\label{distortion}
      \exp(-\widehat{D}_k) \leq \frac{Df^n(x)}{Df^n(y)}\leq \exp(\widehat{D}_k).
    \end{equation}
    Similarly, increasing $\widehat{D}_k$ if necessary, we obtain that \eqref{distortion} also holds for every $x$ and $y$ in $(y_n^k,y_{n+1}^k)$. Now, from \eqref{e: defi x_n^k} and \eqref{e: f(x_{n+1}) = x_{n}}, we get 
    $f^n((x_{n+1}^k,x_n^k))=\text{ int }\bigcup_{j=k+1}^{d}I_{j}$, and then by \eqref{eq x_n} and \eqref{distortion} there is $\widetilde{C}_k>0$ such that for every $x\in (x_{n+1}^k,x_n^k)$, 
    \begin{equation}\label{eq distortion 2}
        \widetilde{C}_k^{-1}<Df^n(x)n^{-(1+1/\alpha_k)}<\widetilde{C}_k.
    \end{equation}
    Again, increasing $\widetilde{C}_k$ if necessary, we get that \eqref{eq distortion 2} also holds for every $x\in(y_n^k,y_{n+1}^k)$. By the same argument, if $1\in N(f)$, we can find $\widetilde{C}_1>0$ such that for every $x\in (x_{n+1}^1,x_n^1)$
    \begin{equation*}
        \widetilde{C}_1^{-1}<Df^n(x)n^{-(1+1/\alpha_1)}<\widetilde{C}_1,
    \end{equation*}
    and if $d\in N(f)$, we can find $\widetilde{C}_d>0$ such that for every $x\in(y_{n}^d,y_{n+1}^d)$
    \begin{equation*}
        \widetilde{C}_d^{-1}<Df^n(x)n^{-(1+1/\alpha_d)}<\widetilde{C}_d
    \end{equation*}
    Therefore, taking $C_0\=\max_{k\in N(f)} \widetilde{C}_k$ we obtain \eqref{eq Df^n}, which concludes the proof of the lemma.
\endproof

\begin{rema}\label{rmk MP bound 1}
    It follows from \eqref{eq Df^n} that there is $\varepsilon_0>0$ such that for every $k\in N(f)$, every $n\in\N$, and every $x\in (x^k_{n+1},x^k_n)\cup(y^k_n,y^k_{n+1})$, we have
    \begin{equation}\label{MP bound}
    (1+\varepsilon_0 n)^{1+1/\alpha_k} \leq Df^n(x) \leq (1+\varepsilon_0^{-1} n)^{1+1/\alpha_k}.
\end{equation}
On the other hand, decreasing $\varepsilon_0>0$ if necessary, so that $(1+\varepsilon_0)^{^{1+1/\alpha_{\max}}}\leq \lambda$, we obtain that for every $k\in E(f)$, every $n\in \N$, and every $x\in h_k^n([0,1])$, we have
\begin{equation}
    Df^n(x) \geq (1+\varepsilon_0)^{n(1+1/\alpha_{\max})} \geq (1+\varepsilon_0 n)^{1+1/\alpha_{\max}}.
\end{equation}
\end{rema}

\begin{lemm}\label{lemma bound Df^n}
Let $\varepsilon_0>0$ be given by Remark \ref{rmk MP bound 1}. There exists a constant $C_1>1$ such that for every $n\in \N$ and every set of the form $Q\=h_{k_1}\circ\cdots\circ h_{k_{n+1}}([0,1])$ with either $k_{n+1}\neq k_{n}$ or $k_{n+1}\in E(f)$, we have
    \begin{equation}\label{distortion eq 1}
         \diam Q \leq (1+\varepsilon_0 n)^{-(1 + 1/\alpha_{\max})},
    \end{equation}
    and for all $x$ and $y$ in $\text{int }Q$
    \begin{equation}\label{distortion eq 2}
     Df^n(x) \geq (1+\varepsilon_0 n)^{1+1/\alpha_{\max}} \text{ and } C_1^{-1}\leq \frac{Df^n(x)}{Df^n(y)}\leq C_1.
    \end{equation}
    \end{lemm}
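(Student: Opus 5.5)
The idea is to split the word $k_1,\dots,k_{n+1}$ into maximal blocks of repeated symbols and handle each block with Lemma \ref{Lemma M-P estimates} or with \eqref{eq exp bound}. Concretely, write the sequence $k_1,\dots,k_n$ as a concatenation of maximal runs $k_{j_i+1}=\dots=k_{j_{i+1}}=\ell_i$ with $0=j_0<j_1<\dots<j_r=n$. Set $Q_i:=h_{\ell_{i}}^{m_i}(\cdot)$ applied to the image of the subsequent composition, where $m_i=j_{i+1}-j_i$. For $x\in\operatorname{int}Q$, the chain rule gives
\[
Df^n(x)=\prod_{i=0}^{r-1} Df^{m_i}(f^{j_i}x).
\]
Along each run the point $f^{j_i}x$ lies in $h_{\ell_i}^{m_i}(J)$, where $J$ is the image of the remaining composition.

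\textbf{Lower bound in \eqref{distortion eq 2}.} There are two kinds of runs.
\begin{itemize}
\item If $\ell_i\in E(f)$, then Remark \ref{rmk MP bound 1} gives $Df^{m_i}\ge(1+\varepsilon_0)^{m_i(1+1/\alpha_{\max})}$.
\item If $\ell_i\in N(f)$, maximality of the run means the next symbol differs from $\ell_i$. For the last run, this is exactly the hypothesis $k_{n+1}\neq k_n$. If instead $k_{n+1}=k_n\in E(f)$, the last run is expanding and this case does not arise. Hence $f^{j_i}x$ lies in $h_{\ell_i}^{m_i}(\operatorname{int}I_{k})$ with $k\neq\ell_i$, which is contained in a fundamental domain $(x^{\ell_i}_{m_i+1},x^{\ell_i}_{m_i})$ or $(y^{\ell_i}_{m_i},y^{\ell_i}_{m_i+1})$. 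Then \eqref{MP bound} gives $Df^{m_i}\ge(1+\varepsilon_0m_i)^{1+1/\alpha_{\ell_i}}\ge(1+\varepsilon_0m_i)^{1+1/\alpha_{\max}}$.
\end{itemize}
Since $Df\ge 1$ everywhere and $(1+a)(1+b)\ge 1+a+b$, multiplying the run estimates gives
\[
Df^n(x)\ge\Bigl(1+\varepsilon_0\sum_i m_i\Bigr)^{1+1/\alpha_{\max}}=(1+\varepsilon_0 n)^{1+1/\alpha_{\max}}.
\]
The diameter bound \eqref{distortion eq 1} then follows from the mean value theorem, because $f^n$ maps $Q$ diffeomorphically onto $h_{k_{n+1}}([0,1])\subset[0,1]$. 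So $\diam Q\le \sup_Q (Df^n)^{-1}$.

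\textbf{Bounded distortion.} This is the main obstacle: we need a constant independent of $n$ and of the number of runs. I would bound
\[
\log\frac{Df^n(x)}{Df^n(y)}\le\sum_{j=0}^{n-1}\bigl|\log Df(f^jx)-\log Df(f^jy)\bigr|
\le\sum_{j}D\,\bigl|f^j(Q)\bigr|^{\theta},
\]
using the Hölder continuity of $\log Df$ on each $I_k$ with exponent $\theta_k$. Note that $f^j(Q)$ is always contained in a single $I_{k_{j+1}}$. Each $f^j(Q)$ is itself a cylinder of the same type with length $n-j$, since its last two symbols are those of $Q$. Applying the diameter bound \eqref{distortion eq 1} already proved to these cylinders gives
\[
|f^j(Q)|\le(1+\varepsilon_0(n-j))^{-(1+1/\alpha_{\max})}.
\]
The total is then at most $D\sum_{m\ge1}(1+\varepsilon_0 m)^{-\theta(1+1/\alpha_{\max})}$. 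With $\theta=\min\{1,\alpha_{\max}\}$ the exponent exceeds $1$, so the sum converges.

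The subtlety is that on expanding branches $\log Df$ is only Hölder of some exponent $\gamma_0$ (from P1), not necessarily $\min\{1,\alpha_{\max}\}$. So the exponent can be too small for this crude polynomial bound. To fix this, I would treat the $j$ lying inside expanding runs separately. There, a backward count from the end shows that, after each expanding step, $|f^j(Q)|$ decays geometrically by $\lambda^{-1}$. Combined with the polynomial bound from later runs, this gives a bound of the form
\[
|f^j(Q)|\le\lambda^{-(\#\text{expanding steps after }j)}\,(1+\varepsilon_0 s)^{-(1+1/\alpha_{\max})},
\]
where $s$ counts the remaining neutral steps. Raising this to any positive power and summing then still converges. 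For indices inside neutral runs, the within-run estimate \eqref{distortion} from the proof of Lemma \ref{Lemma M-P estimates} already gives uniform distortion, so each such run needs only its own exponent $\min\{1,\alpha_{\ell}\}$. Summing all contributions yields $C_1$.
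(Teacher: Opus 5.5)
Your lower bound on $Df^n$ and your diameter estimate are correct and follow the paper's argument: maximal runs, \eqref{MP bound} on the fundamental domain for neutral runs, the exponential bound for expanding runs, and the chain rule. The gap is in the distortion step, at ``Summing all contributions yields $C_1$''. The within-run estimate \eqref{distortion} bounds each neutral run by a constant $\widehat D_\ell$ that does not shrink, while the number of neutral runs can be of order $n$.

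Concretely, take $d=2$, $N(f)=\{1,2\}$, $\alpha_1=1/10$, $\alpha_2=10$, and the alternating word $1212\cdots$. There are no expanding steps, every run has length one, and your sum of per-run constants grows linearly in $n$. Your crude bound does not help either. On $I_1$ the only available regularity of $\log Df$ is $\min\{1,\alpha_1\}=1/10$, so you would need $\sum (1+\varepsilon_0(n-j))^{-11/100}$ over the $j$ with $k_{j+1}=1$, and this is unbounded in $n$. So the exponent problem you flagged for expanding branches also occurs for neutral branches with $\alpha_\ell<\alpha_{\max}$. Your fallback for neutral runs does not give a constant independent of $n$.

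The missing idea is to keep the multiplicative structure across runs. You collapse $\prod_r(1+\varepsilon_0 m_r)^{1+1/\alpha_{\max}}$ into $(1+\varepsilon_0\sum_r m_r)^{1+1/\alpha_{\max}}$. That is harmless for the lower bound, but it destroys the decay you need here.

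\begin{itemize}
\item Every complete run contributes a factor at least $(1+\varepsilon_0)^{1+1/\alpha_{\max}}$. Hence $|f^{j_{i+1}}(Q)|\le(1+\varepsilon_0)^{-(1+1/\alpha_{\max})R_i}$, where $R_i$ is the number of runs after the $i$-th one. This decay is geometric in $R_i$ and survives raising to any positive power.
\item Inside a neutral run of symbol $\ell$ and length $m_i$, \eqref{MP bound} on the fundamental domain at level $m_i-k$ gives $|f^{j_i+k}(Q)|\le|f^{j_{i+1}}(Q)|\,(1+\varepsilon_0(m_i-k))^{-(1+1/\alpha_\ell)}$. Since $\min\{1,\alpha_\ell\}(1+1/\alpha_\ell)>1$, the whole run contributes at most $\widehat C\,|f^{j_{i+1}}(Q)|^{\min\{1,\alpha_\ell\}}$.
\item Expanding runs are handled the same way, with a geometric factor in $m_i-k$.
\end{itemize}

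Summing over runs then gives a geometric series with ratio $(1+\varepsilon_0)^{-\widehat\alpha}$, where $\widehat\alpha=(1+1/\alpha_{\max})\min_i\min\{1,\alpha_i\}$; this is the paper's argument. Your device of counting the expanding steps after $j$ is the right instinct. It has to count runs rather than expanding steps, so that it also controls the neutral ones.
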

\begin{proof}
    We first consider a set $P$ of the form  $h_\ell^s\circ h_k([0,1])$ for some $s\in\N$, and $\ell,k \in \{1,\ldots,d\}$ with $\ell\neq k$ or $k\in E(f)$.  Observe that
    \begin{equation}\label{eq 1 rmk MP bound 2}
        f^s(P)= h_k([0,1])\subseteq \overline{I}_k \text{ and } f^j(P)= h^{s-j}_{\ell}\circ h_k([0,1])\subseteq \overline{I}_\ell \text{ for every }j=0,\ldots,s-1.
    \end{equation}
    If $\ell\in E(f)$, from Remark \ref{rmk MP bound 1} we have 
    \begin{equation}\label{eq bound for Df^s E(f)}
        Df^s(x) \geq (1+\varepsilon_0 s)^{1+1/\alpha_{\max}}\text{ for every } x\in \text{int } P.
    \end{equation}
    On the other hand, if $\ell\in N(f)$, we have that $\ell\neq k$, and then from \eqref{eq 1 rmk MP bound 2} we get
    \begin{equation*}
        P \subseteq h_{\ell}^s([0,1])\setminus h_{\ell}^{s+1}([0,1]).
    \end{equation*}
    It follows that
    \begin{equation}\label{eq lower bound Df^s}
        \text{int }P\subseteq 
        \begin{cases}
            (x_{s+1}^1, x_s^1)& \text{ if } \ell=1,\\
            (x_{s+1}^\ell, x_s^\ell)\cup(y_s^\ell, y_{s+1}^\ell) & \text{ if } \ell\in \{2,\ldots,d-1\},\\
            (y_{s+1}^d, y_s^d) & \text{ if } \ell=d.
        \end{cases}
    \end{equation}
    Thus, from Remark \ref{rmk MP bound 1} we obtain
    \begin{equation}\label{eq 2 rmk MP bound 2}
         Df^s(x) \geq (1+\varepsilon_0 s)^{1+1/\alpha_\ell}  \geq (1+\varepsilon_0 s)^{1+1/\alpha_{\max}} \text{ for every } x\in \text{int }P.
    \end{equation}

    Now, let $Q$ be a set as the one in the statement of the lemma.
    Let $m\in\N$, $n_1,\ldots,n_m\in \N$, and $\ell_1,\ldots,\ell_{m+1}\in \{1,\ldots,d\}$ be such that $\ell_j \neq \ell_{j+1}$ for every $j=1,\ldots,m-1$ and the word $\ell_{1}^{n_1} \ell_{2}^{n_2}\cdots \ell_{m}^{n_m}\ell_{m+1}$,  where $\ell_{j}^{n_j}$ denotes the concatenation of $\ell_j$ with itself $n_j$ times, satisfies
    \begin{equation}
        k_1k_2\cdots k_{n} k_{n+1} = \ell_{1}^{n_1} \ell_{2}^{n_2}\cdots \ell_{m}^{n_m}\ell_{m+1}.
    \end{equation}
    Then,
    \begin{equation}
        n_1+\ldots + n_m =n \text{\; and \;} \ell_{m+1}= k_{n+1}.
    \end{equation}
    It follows that $\ell_m\neq \ell_{m+1}$ or $\ell_{m+1}\in E(f)$. Put $n_0\=0$, by the chain rule we have 
    \begin{equation}\label{eq chain rule lemma bound Df^n}
        Df^n(x) = \prod_{j=1}^m Df^{n_j}(f^{n_0+\ldots + n_{j-1}}(x)).
    \end{equation}
    Notice that for each $j\in \{1,\ldots,m\}$,
    \begin{equation*}
        f^{n_0+\dots + n_{j-1}}(x) \in h_{\ell_j}^{n_j}\circ\cdots\circ h_{\ell_m}^{n_m} \circ h_{{\ell_{m+1}}}([0,1]) \subseteq h_{\ell_j}^{n_j}\circ h_{\ell_{j+1}}([0,1]).
    \end{equation*}
    Since for every $j\in \{1,\ldots,m-1\}$ we have $\ell_j \neq \ell_{j+1}$, and $\ell_m\neq \ell_{m+1}$ or $\ell_{m+1}\in E(f)$, from \eqref{eq bound for Df^s E(f)}, \eqref{eq 2 rmk MP bound 2}, and \eqref{eq chain rule lemma bound Df^n} we obtain that for each $x\in \text{int }Q$,
   \begin{equation*}
       \begin{split}
           Df^{n}(x) &\geq \prod_{j=1}^m (1+\varepsilon_0 n_j)^{(1+1/\alpha_{\max} )}\geq (1+\varepsilon_0(n_1+\ldots+n_m))^{1+1/\alpha_{\max}}= (1+\varepsilon_0 n)^{1+1/\alpha_{\max}}.
       \end{split}
   \end{equation*}
   Since $f^n$ maps $Q$ diffeomorphically onto $h_{k_{n+1}}([0,1])$, whose diameter is less than one, the lower bound on $Df^n$ gives
\begin{equation*}
\operatorname{diam} Q \le \left(\inf_{x\in\operatorname{int} Q} Df^n(x)\right)^{-1} \le (1+\varepsilon_0 n)^{-(1+1/\alpha_{\max})}.
\end{equation*}
 This proves \eqref{distortion eq 1} and the first assertion in \eqref{distortion eq 2}. For each $k\in E(f)$, choose $\alpha_k\in (0,1]$ so that $\log Df|_{I_k}$ is Hölder-continuous of exponent $\alpha_k$. Now, let $x$ and $y$ be in $Q$, then for a Hölder constant $D>0$ we have
    \begin{equation*}
    \begin{split}
        |\log Df^n(x) - \log Df^n(y)| &\leq \sum_{j=1}^{m} |\log Df^{n_j}(f^{n_0+\ldots+n_{j-1}}(x)) - \log Df^{n_j}(f^{n_0+\ldots+n_{j-1}}(y))|\\
        &\leq  \sum_{j=1}^m \sum_{k=0}^{n_j -1}|\log Df(f^{n_0+\ldots+n_{j-1} + k}(x)) - \log Df(f^{n_0+\ldots+n_{j-1} + k}(y))|\\
        &\leq D\sum_{j=1}^{m}\sum_{k=0}^{n_j-1} |f^{n_0+\ldots+n_{j-1} + k}(x) - f^{n_0+\ldots+n_{j-1} + k}(y)|^{\min\{1,\alpha_{\ell_j}\}}\\
        &\leq D\sum_{j=1}^{m}\sum_{k=0}^{n_j-1} |f^{n_0+\ldots+n_{j-1} + k}(Q)|^{\min\{1,\alpha_{\ell_j}\}}.
        \end{split}
    \end{equation*}
    If $\ell_j\in E(f)$, since $(1+\varepsilon_0)^{1+1/\alpha_{\max}}\leq \lambda$, by Remark \ref{rmk MP bound 1} we have that for every $k\in \{0,\ldots, n_j -1\}$,
    \begin{equation}
        |f^{n_0+\ldots+n_{j-1} + k}(Q)| \leq \frac{|f^{n_0+\ldots + n_j}(Q)|}{(1+\varepsilon_0 )^{(n_j - k)(1+1/\alpha_{\max})}},
    \end{equation}
    and if $\ell_j \in N(f)$, by \eqref{eq Df^n} and \eqref{eq lower bound Df^s}, for every $k\in\{0,\ldots,n_j-1\}$,
    \begin{equation}
        |f^{n_0+\ldots+n_{j-1} + k}(Q)| \leq \frac{|f^{n_0+\ldots + n_j}(Q)|}{(1+\varepsilon_0(n_j-k) )^{1+1/\alpha_{\ell_j}}}.
    \end{equation}
    Hence, taking $\widehat{C}>0$ greater than 
    \begin{equation*}
       \max_{\ell\in\{1,\ldots,d\}}D \sum_{k=0}^{+\infty} \left(\frac{1}{1+\varepsilon_0}\right)^{k\min\{1,\alpha_{\ell}\}(1+1/\alpha_{\max})} \text{\;and\;} \max_{\ell\in\{1,\ldots,d\}}D\sum_{k=0}^{+\infty} \left(\frac{1}{1+ \varepsilon_0 k}\right)^{\min\{1,\alpha_{\ell}\}(1+1/\alpha_{\ell})},
    \end{equation*}
    we obtain
    \begin{equation}\label{eq 2 MP bound}
        |\log Df^n(x) - \log Df^n(y)| \leq \widehat{C}\left( 1 + \sum_{j=1}^{m-1} |f^{n_1+\ldots + n_j}(Q)|^{\min\{1,\alpha_{\ell_j}\}}\right).
    \end{equation}
    Now, put 
    \begin{equation*}
        \widehat{\alpha}\=  (1+1/\alpha_{\max})\min_{i\in \{1,\dots,d\}}\min\{1,\alpha_i\},
    \end{equation*}
    and observe that for each $j\in\{1,\ldots,m-1\}$ we have
    \begin{equation*}
    \begin{split}
        &|f^{n_1+\ldots+n_j}(Q)|^{\min\{1,\alpha_{\ell_j}\}} \leq\\
        &\left(\prod_{k\in\{j+1,\ldots,m\}:\ell_k\in E(f)} \frac{1}{(1+\varepsilon_0)^{n_k(1+1/\alpha_{\max})}}\right)^{\min\{1,\alpha_{\ell_j}\}}\cdot\left(\prod_{k\in\{j+1,\ldots,m\}: \ell_k\in N(f)} \frac{1}{(1+\varepsilon_0 n_k)^{(1+1/\alpha_{\ell_k})}}\right)^{\min\{1,\alpha_{\ell_j}\}}\\
        &\leq  \prod_{k=j+1}^{m} \frac{1}{(1+\varepsilon_0)^{\widehat{\alpha}}}\\
       & = \left(\frac{1}{1+\varepsilon_0}\right)^{\widehat{\alpha}(m-j)}.
        \end{split}
    \end{equation*}
    Together with \eqref{eq 2 MP bound} this implies
    \begin{equation}
        |\log Df^n(x) - \log Df^n(y)| \leq \widehat{C}\sum_{j=1}^{m} \left(\frac{1}{1+\varepsilon_0}\right)^{\widehat{\alpha}(m-j)} = \widehat{C}\sum_{j=0}^{m-1} \left(\frac{1}{(1+\varepsilon_0)^{\widehat{\alpha}}}\right)^{j}. 
    \end{equation}
    Taking $C_1\= \exp(\widehat{C}\sum_{j=0}^{+\infty} \frac{1}{(1+\varepsilon_0)^{\widehat{\alpha}j}})$, we obtain the second part of \eqref{distortion eq 2}.
\end{proof}
\begin{lemm}\label{diam to 0}
    We have that
    \begin{equation}\label{diam to 0 eq0}
        \max_{Q\in\bigvee_{j=0}^{n}f^{-j}\mathcal{P}} \diam Q \to 0 \text{ as } n \to +\infty.
    \end{equation}
\end{lemm}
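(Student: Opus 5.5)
Every element $Q$ of $\bigvee_{j=0}^{n}f^{-j}\mathcal{P}$ (up to endpoints) is contained in a cylinder $h_{k_1}\circ\cdots\circ h_{k_{n+1}}([0,1])$ with $k_1,\dots,k_{n+1}\in\{1,\dots,d\}$. It therefore suffices to show that
\[
\delta_n:=\max_{k_1,\dots,k_{n+1}}\diam\, h_{k_1}\circ\cdots\circ h_{k_{n+1}}([0,1])\to 0 .
\]
I would split the word $k_1\cdots k_{n+1}$ according to its final block of equal letters. Write it as $w\,\ell^{s}$, where $w=k_1\cdots k_{n+1-s}$ and $\ell^s$ is the maximal terminal run of the letter $\ell$, so that $s\ge 1$ and either $w$ is empty or its last letter differs from $\ell$.

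\textbf{Case 1: $\ell\in E(f)$, or $w$ empty with $\ell\in E(f)$.} Then the whole cylinder ends in a letter of $E(f)$, so Lemma \ref{lemma bound Df^n} applies directly. The diameter is at most $(1+\varepsilon_0 n)^{-(1+1/\alpha_{\max})}$.

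\textbf{Case 2: $\ell\in N(f)$.} The cylinder is contained in $h_w\bigl(h_\ell^{s}([0,1])\bigr)$, where $h_w$ denotes the corresponding composition. By Lemma \ref{Lemma M-P estimates}, $h_\ell^s([0,1])$ is an interval around $\xi_\ell$ of length
\[
|x^\ell_s-y^\ell_s|\lesssim s^{-1/\alpha_\ell}.
\]
Here one uses $x_s^\ell-\xi_\ell\sim(\alpha_\ell b_\ell s)^{-1/\alpha_\ell}$, and similarly for $y$. If $w$ is nonempty, then $h_w\circ h_\ell([0,1])$ is a set $Q'$ of the type in Lemma \ref{lemma bound Df^n} of length $|w|=n+1-s$, because the last two letters of $w\ell$ differ. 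Hence
\[
\diam Q'\le (1+\varepsilon_0(n+1-s))^{-(1+1/\alpha_{\max})},
\]
and the cylinder $h_w(h_\ell^s([0,1]))$ is contained in $Q'$. Together these give
\[
\diam\le \min\Bigl\{C s^{-1/\alpha_\ell},\ (1+\varepsilon_0(n+1-s))^{-(1+1/\alpha_{\max})}\Bigr\}.
\]
One of $s$ and $n+1-s$ is at least $(n+1)/2$. So this is at most
\[
\max\Bigl\{C((n+1)/2)^{-1/\alpha_{\min}},\ (1+\varepsilon_0 (n+1)/2)^{-(1+1/\alpha_{\max})}\Bigr\},
\]
which tends to $0$. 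When $w$ is empty, the cylinder is $h_\ell^{n+1}([0,1])$, and the first bound with $s=n+1$ applies directly.

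The only delicate point is Case 2. Lemma \ref{lemma bound Df^n} excludes words ending in a repeated neutral letter, so the terminal run must be controlled separately through the asymptotics of $x_n^k,y_n^k$ in Lemma \ref{Lemma M-P estimates}. The containment $h_w(h^s_\ell([0,1]))\subseteq h_w(h_\ell([0,1]))$ is immediate, since $h_\ell^s([0,1])\subseteq h_\ell([0,1])$.

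Finally, to pass from cylinders to partition elements, I would note that each $Q\in\bigvee_{j=0}^n f^{-j}\mathcal{P}$ is the set of points whose itinerary is $k_1,\dots,k_{n+1}$. Since $f^{j}(Q)\subseteq I_{k_{j+1}}$, we get
\[
Q\subseteq h_{k_1}\circ\cdots\circ h_{k_{n+1}}([0,1]).
\]
The uniform bound above then yields \eqref{diam to 0 eq0}.
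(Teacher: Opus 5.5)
Your proof is correct and is essentially the paper's argument. You split the word at its maximal terminal run, control a long neutral run through the asymptotics of $x_n^k,y_n^k$ from Lemma~\ref{Lemma M-P estimates}, and otherwise apply Lemma~\ref{lemma bound Df^n} to the prefix ending at the last change of letter; the first bound implicitly uses that the inverse branches are non-expanding because $Df\geq 1$, which is the same fact as the paper's step $\diam Q\leq \diam f^{\lfloor n/2\rfloor}(Q)$.

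The differences are cosmetic. You handle a terminal letter in $E(f)$ directly with Lemma~\ref{lemma bound Df^n}, where the paper uses $\lambda$. Also, the uniform exponent should be $-1/\alpha_{\max}$ rather than $-1/\alpha_{\min}$, since $s^{-1/\alpha_\ell}\leq s^{-1/\alpha_{\max}}$ for $s\geq 1$; the conclusion is unaffected.
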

\begin{proof}
    Let $Q\in \bigvee_{j=0}^{n}f^{-j}\mathcal{P}$. Then there are $k_1,\ldots,k_{n+1} \in\{1,\ldots,d\}$ such that  
    \begin{equation}
    \overline{Q}=h_{k_1}\circ\cdots\circ h_{k_{n+1}}([0,1]).
    \end{equation}
    If there is an integer $j = \lceil n/2 \rceil,\dots, n$ such that $k_{j}\neq k_{j+1}$, by Lemma \ref{lemma bound Df^n} we have
    \begin{equation}\label{eq 2 diam to 0}
        \diam Q \leq \diam (h_{k_1}\circ \cdots \circ h_{k_{j+1}}([0,1])) \leq (1+\varepsilon_0 j)^{-(1+1/\alpha_{\max})} \leq \left(1+\frac{\varepsilon_0 n}{2}\right)^{-(1+1/\alpha_{\max})}.
    \end{equation}
    Now assume that there is $k\in\{1,\ldots,d\}$ with $k_j=k$ for every integer $j\in \{\lceil n/2 \rceil,\dots,n+1\}$. If $k\in E(f)$, we have
    \begin{equation}\label{eq 3 diam to 0}
        \diam Q \leq \diam f^{\left\lfloor \frac{n}{2}\right\rfloor}(Q)\leq \lambda^{-(n+1-\lfloor n/2\rfloor)}=\lambda^{-(\lceil \frac{n}{2}\rceil+1)}.
    \end{equation}
    If $k\in N(f)\cap\{2,\ldots,d-1\}$, then
    \begin{equation*}
        f^{\left\lfloor \frac{n}{2}\right\rfloor}(Q) \subseteq \left[y_{\lceil \frac{n}{2}\rceil+1}^k, x_{\lceil \frac{n}{2}\rceil+1}^k\right].
    \end{equation*}
    Thus,
    \begin{equation}\label{eq 4 diam to 0}
       \diam Q \leq \diam f^{\left\lfloor \frac{n}{2}\right\rfloor}(Q) \leq  x_{\lceil \frac{n}{2}\rceil+1}^k - y_{\lceil \frac{n}{2}\rceil+1}^k.
    \end{equation}
    If $k = 1$ and $1\in N(f)$, we get
    \begin{equation*}
        f^{\left\lfloor \frac{n}{2}\right\rfloor}(Q) \subseteq \left[0,x_{\lceil \frac{n}{2}\rceil+1}^1\right],
    \end{equation*}
    and it follows that
    \begin{equation}\label{eq 5 diam to 0}
        \diam Q \leq \diam f^{\left\lfloor \frac{n}{2}\right\rfloor}(Q) \leq x_{\lceil \frac{n}{2}\rceil+1}^1.
    \end{equation}
    Finally, if $k = d $ and $d\in N(f)$, we have
    \begin{equation*}
        f^{\left\lfloor \frac{n}{2}\right\rfloor}(Q) \subseteq \left[y_{\lceil \frac{n}{2}\rceil+1}^d,1\right],
    \end{equation*}
    and then 
    \begin{equation}\label{eq 6 diam to 0}
        \diam Q \leq \diam f^{\left\lfloor \frac{n}{2}\right\rfloor}(Q) \leq 1-y_{\lceil \frac{n}{2}\rceil+1}^d.
    \end{equation}
    Therefore, by  \eqref{eq 2 diam to 0}, \eqref{eq 3 diam to 0}, \eqref{eq 4 diam to 0}, \eqref{eq 5 diam to 0}, \eqref{eq 6 diam to 0}, and Lemma \ref{Lemma M-P estimates}, we obtain \eqref{diam to 0 eq0}.
    \end{proof}
    \begin{rema}\label{rema topologically exact}
        From Lemma \ref{diam to 0} we have that for every open set $U\subseteq [0,1]$, there are $N\in\N$ and $Q\in \bigvee_{{j=0}}^{N-1} f^{-j}\mathcal{P}$ contained in $U$. Since each branch is full, $f^N$ maps the interior of $Q$ onto $(0, 1)$. Hence $(0,1) \subseteq f^N(U)$. That is, the map $f$ is topologically exact on $(0,1)$.
    \end{rema}

\subsection{Continuous extension and topological pressure}\label{subseq: continuous extension}
The main goal of this subsection is to prove Lemma \ref{tree pressure}, which gives a topological characterization for the pressure of branch continuous potentials (see Definition \ref{branch Holder}). To deal with discontinuities both on $f$ and the potential, in Lemma \ref{extension lemma} we construct a continuous extension of $([0,1],f)$ that allows us to use known properties of continuous dynamical systems on compact metric spaces.
 Recall that $\text{int } I_k = (x_{k-1},x_k)$ for every $k\in\{1,\ldots,d\}$.
\begin{lemm}\label{extension lemma}
     Put $Y\= \bigcup_{n\in\N_0} f^{-n}(\{x_1,\ldots,x_{d-1}\})$. There is a totally ordered set $X$, endowed with the order topology, and a continuous surjective map $\pi: X\to[0,1]$ such that the following hold.
    \begin{enumerate}
        \item[1.] The sets $X\setminus \pi^{-1}(Y)$ and $[0,1]\setminus Y$ are equal as totally ordered sets, the map $\pi$ is the identity on $X\setminus \pi^{-1}(Y)$ and $\pi^{-1}(Y)$ consists of two disjoint copies $Y^{-}$ and $Y^{+}$ of $Y$ such that for every $y\in Y$ one has $\pi^{-1}(\{y\})=\{y^-,y^+\}$ and $y^-<y^+$.

        \item[2.] The order topology on $X$ is compact and metrizable.

        \item[3.] There is a continuous map $\widetilde{f}: X\to X$ such that $\pi\circ \widetilde{f} = f\circ\pi$,
        \begin{equation*}
            Y^- = \bigcup_{n\in\N_0}\widetilde{f}^{-n}(\{x_1^-,\ldots, x_{d-1}^-\}), \text{ and } Y^+= \bigcup_{n\in\N_0}\widetilde{f}^{-n}(\{x_1^+,\ldots, x_{d-1}^+\}).
        \end{equation*}

    \item[4.] The map $\widetilde{\pi}:X\to \{0,\ldots,d-1\}^{\N_0}$ given by 
\begin{equation*}
    (\widetilde{\pi}(x))_k= \begin{cases}
        0 & \text{ if }\widetilde{f}^k(x) \in [0,x_1^-],\\
        i & \text{ if }\widetilde{f}^k(x) \in [x_i^+,x_{i+1}^-],\\
        d-1 & \text{ if }\widetilde{f}^k(x)\in [x_{d-1}^+,1],
    \end{cases}
\end{equation*}
   is a topological conjugacy from $(X,\widetilde{f})$ to the full shift $(\{0,\ldots,d-1\}^{\N_0},\sigma)$.

    \item[5.] For every invariant measure $\nu$ for $f$ there is a unique invariant measure $\mu$ for $\widetilde{{f}}$ such that $\pi_*\mu=\nu$ and the systems $(X,\widetilde{f},\mu)$ and $([0,1],f,\nu)$ are isomorphic in measure. In particular, the map $\pi_*:\mathcal{M}_{\widetilde{f}}\to \mathcal{M}_f$ is one-to-one.
    \end{enumerate}
\end{lemm}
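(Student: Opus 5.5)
The plan is to build $X$ as a ``doubling'' of $[0,1]$ along the countable set $Y$, following \cite[Appendix A.5]{Keller2}. Concretely, set
\[
X \= ([0,1]\setminus Y)\sqcup Y^-\sqcup Y^+,
\]
with $Y^\pm=\{y^\pm : y\in Y\}$. Order $X$ by the order of $[0,1]$ on the projections, with the extra rule $y^-<y^+$, and let $\pi$ forget the sign. This gives part 1 by construction. Continuity and surjectivity of $\pi$ follow because preimages of open intervals are open order-intervals; note that $0,1\notin Y$ since $f$ maps endpoints to endpoints, while $Y\subset(0,1)$.

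For part 2, I would show that $X$ is a complete linear order (every subset has a supremum), so that $X$ is compact in the order topology. I would then exhibit a countable base, namely the order-intervals with endpoints in the countable set
\[
Y^\pm\cup(\mathbb{Q}\setminus Y).
\]
Metrizability then follows from Urysohn, since $X$ is compact Hausdorff and second countable.

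For part 3, I would define $\widetilde f$ on points of $[0,1]\setminus Y$ as $f$; this is consistent because $f(x)\notin Y$ when $x\notin Y$, using $f^{-1}(Y)\subseteq Y$ and that $f(x)\in Y$ forces $x\in Y$. For $y\in Y$ with $f^n(y)=x_i$, set $\widetilde f(y^\pm)$ to be the one-sided limits of $f$ at $y$, interpreted in $X$: at $x_i^-$ the left branch limit is $1$ or $0$ and at $x_i^+$ the right branch limit is $0$ or $1$; otherwise $\widetilde f(y^\pm)=f(y)^\pm$ when $f$ is increasing at $y$, with the signs swapped if the branch is decreasing. Continuity will be checked on each closed piece $[x_{i}^+,x_{i+1}^-]$, where $\widetilde f$ is a monotone bijection onto $X$: it is order-preserving or reversing and onto, hence a homeomorphism of the pieces onto $X$. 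The description of $Y^\pm$ as iterated preimages of $x_i^\pm$ follows by induction on $n$.

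Part 4 comes next. Since the pieces $[0,x_1^-],[x_i^+,x_{i+1}^-],[x_{d-1}^+,1]$ are clopen and partition $X$, the map $\widetilde\pi$ is continuous and $\widetilde\pi\circ\widetilde f=\sigma\circ\widetilde\pi$. Surjectivity follows because each piece maps onto $X$, so cylinders are nonempty nested compact sets. Injectivity uses Lemma \ref{diam to 0}: two distinct points with the same itinerary would project into a single element of $\bigvee_{j\le n}f^{-j}\mathcal P$ for every $n$, so their images under $\pi$ coincide. They would then be $y^-,y^+$ for some $y$, but these have different itineraries at the first time $f^n(y)=x_i$. Since $\widetilde\pi$ is a continuous bijection from a compact space, it is a homeomorphism.

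For part 5, I would use that $Y$ is countable and that each ergodic component of $\nu$ either gives zero mass to $Y$ or is supported on a periodic orbit in $Y$. This is the main obstacle: a point of $Y$ eventually maps to some $x_i$, and thereafter $f$ follows the branch convention for $x_i\in I_k$, so an invariant measure may charge $Y$ only through periodic orbits passing through the partition points. For example, if $x_i\in I_{i+1}$ and $f(x_i)=0$ is a fixed point, then $\nu$ charges $0\notin Y$, which is fine. In general $\nu(Y)$ involves points whose forward orbit hits $x_i$ and then lands at $0$ or $1$, which lie outside $Y$. Hence $\nu(Y)=0$ for invariant $\nu$, since by Poincar\'e recurrence $\nu$-almost every point of $Y$ would have to return to $Y$ infinitely often, yet orbits leave $Y$ forever. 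Given this, define $\mu$ as the pushforward of $\nu$ under the inverse of $\pi$ on $X\setminus\pi^{-1}(Y)$. This $\mu$ is $\widetilde f$-invariant, and $\pi$ is a measure isomorphism. Uniqueness follows because any lift $\mu'$ satisfies $\mu'(\pi^{-1}Y)=\nu(Y)=0$ and $\pi$ is injective off $\pi^{-1}Y$.
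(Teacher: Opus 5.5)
Your proposal is correct up to one flaw that is in the statement itself (see the end). It builds the same object as the paper. The doubling of $[0,1]$ along $Y$, compactness from the least upper bound property plus second countability and Urysohn, and $\widetilde f$ defined by one-sided limits and monotone on the clopen pieces $P_0,\ldots,P_{d-1}$ all match. So do the conjugacy via Lemma~\ref{diam to 0} and the lifting of measures because $Y$ carries no invariant measure.

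The real difference is in part~4:
\begin{itemize}
\item The paper embeds $X$ into $\R$ by the map $\iota$ built from the atomic measure $\lambda$, and shows directly that cylinders in $X$ shrink, via $\diam\widetilde{J}\le\diam J+d^{-n}$.
\item You avoid any metric. Equal itineraries force equal $\pi$-images, hence a pair $y^-,y^+$. These are separated at the first time the orbit of $y$ hits $\{x_1,\ldots,x_{d-1}\}$, and you conclude with the compact-to-Hausdorff bijection argument.
\end{itemize}
Your route is shorter; the paper's gives an explicit compatible metric as a by-product. You also supply two justifications the paper only asserts: an explicit countable base in part~2, and the reason $\nu(Y)=0$ in part~5 (every orbit starting in $Y$ lands on the fixed points $0,1\notin Y$ and never returns).

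Two points need tightening.

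\textbf{The projection step.} It should go through inverse branches. Write $g_i:=(\widetilde f|_{P_i})^{-1}$; then $\pi\circ g_i=h_{i+1}\circ\pi$. Hence $\widetilde\pi^{-1}[w_0\cdots w_n]=g_{w_0}\circ\cdots\circ g_{w_n}(X)$ projects onto $h_{w_0+1}\circ\cdots\circ h_{w_n+1}([0,1])$. This is the closure of an atom of $\bigvee_{j=0}^{n}f^{-j}\mathcal P$, not the atom itself, and Lemma~\ref{diam to 0} applies to it.

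Arguing forward with the conditions $f^k(\pi(x))\in\overline{I}_{w_k+1}$ would not suffice. For the map of the Example in the introduction, $2/3$ satisfies these conditions for the itinerary $1000\cdots$, since $f(2/3)=0$. Yet the point with that itinerary is $x_1^+$, which lies over $1/3$.

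\textbf{The semiconjugacy.} You never verify $\pi\circ\widetilde f=f\circ\pi$, and it cannot hold at both $x_i^-$ and $x_i^+$. Continuity forces $\widetilde f(x_i^-)=1$ and $\widetilde f(x_i^+)=0$, while $f(x_i)$ is a single value. The identity holds on $X$ minus one copy of each $x_i$; the paper's ``from the definition'' overlooks this too.

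That finite exceptional set lies in $\pi^{-1}(Y)$, which no invariant measure charges. Your part~5 only uses the identity on $X\setminus\pi^{-1}(Y)$, so nothing breaks. Item~3 should, however, be stated in this corrected form.
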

\begin{proof}
    The idea of the construction of the extension is well known. We follow \cite[Appendix A.5]{Keller2}. The set X is obtained from $[0,1]$ by doubling the points in $Y$. Thus, every $y\in Y$ is replaced by two points $y^-$ and $y^+$ and one declares that $y^-<y^+$. This endows $X$ with a total order. The order topology on $X$ is the topology generated by open order intervals and the intervals of the form $[0,b)$ and $(a,1]$ for all $a$ and $b$ in $X$. Denote by $\pi: X\to[0,1]$ the projection. Observe that $\pi$ is continuous since the preimage of every open interval in $[0,1]$ is an open order interval in $X$. This proves the first statement of the lemma and item 1.

    Observe that the total order of $X$ has the least upper bound property, so the order topology on $X$ is compact; see, for instance, \cite[Theorem 27.1]{munkres}. Also, the order topology is Hausdorff, which, together with the compactness, implies that $X$ is regular. Since it is second countable, by the Urysohn Metrization Theorem, see, for instance, \cite[Theorem 34.1]{munkres}, the order topology on $X$ is metrizable, proving item 2.

    The map $\widetilde{f}$ coincides with $f$ on $[0,1]\setminus Y$ and on $y^-$ and $y^+$ is defined by continuity. Thus, for every $k\in \{1,\ldots,d-1\}$ we have $\widetilde{f}(x_k^-)=1$ and $\widetilde{f}(x_k^+)=0$, and for every $y\in Y\setminus \{x_1,\ldots,x_{d-1}\}$ we have $\widetilde{f}(y^+) = f(y)^+$ and $\widetilde{f}(y^-)=f(y)^-$. Since $\widetilde{f}: [0,x_1^-]\to \widetilde{f}([0,x_1^-])$ and $\widetilde{f}: [x_{d-1}^+,1]\to \widetilde{f}([x_{d-1}^+,1])$ are increasing bijections, and $\widetilde{f}: [x_k^+,x_{k+1}^-]\to \widetilde{f}([x_k^+,x_{k+1}^-])$ is an increasing bijection for every $k\in\{1,\ldots, d-2\}$, then $\widetilde{f}$ is continuous. From the definition of $X$ and $\widetilde{f}$, we have $\pi\circ\widetilde{f}= f\circ\pi$, which finishes the proof of item 3.

    To show item 4, we first define a distance on $X$ compatible with the order topology. Define an atomic measure $\lambda$ by 
    \begin{equation}
        \lambda \= \sum_{n=0}^{+\infty} \sum_{y\in f^{-n}(\{x_1,\ldots,x_{d-1}\})} d^{-2n}\delta_y,
    \end{equation}
    and the increasing maps $\iota^-,\iota^+: [0,1]\to\R$ by
    \begin{equation}
        \iota^-(x)= x + \lambda([0,x)) \text{ and } \iota^+(x)= x + \lambda([0,x]).
    \end{equation}
    Observe that 
    \begin{equation*}
        \begin{split}
        &\iota^+(x) < \iota^-(x'), \text{ for } x<x',\\
        & \iota^+(x) = \iota^-(x), \text{ for } x\in[0,1]\setminus Y,\\
        & \iota^+(y) - \iota^{-}(y) = \lambda(\{y\}) \geq d^{-2n}, \text{ for } y\in f^{-n}(\{x_1,\ldots,x_{d-1}\}).
        \end{split}
    \end{equation*}
    We define $\iota:X\to\R$ by 
    \begin{equation}
        \iota(x) = \begin{cases}
            \iota^+(x), & \text{ if }x\in X\setminus Y^{-},\\
            \iota^-(x), & \text{ if }x\in Y^-.
        \end{cases}
    \end{equation}
    The set $\iota(X)$ is closed in $\R$, so its order topology coincides with the induced topology from $\R$. Furthermore, $\iota: X\to\iota(X)$ is an increasing bijection and thus a homeomorphism. For all $x$ and $x'$ in $X$ define the distance 
    \begin{equation}
        d(x,x')\=|\iota(x) - \iota(x')|. 
    \end{equation}
    Put $\widetilde{Y}\= \pi^{-1}(Y)$. Now, we prove that the map $\widetilde{\pi}$ in item 4 is a conjugacy. For every $n\in\N$, and every word $w$ on the alphabet $\{0,\ldots,d-1\}$ of length $n$ denote by $[w]$ the cylinder in $\{0,\ldots,d-1\}^{\N_0}$ that has the word $w$ in the first $n$ coordinates. We have that $\widetilde{\pi}^{-1}([w])$ is an interval $\widetilde{J}$ of the form $[a^+,b^-]$, $[0,b^-]$, or $[a^+,1]$ with $a$ and $b$ in $Y$. Since $[a^+,b^-] = (a^-,b^+)$, $[0,b^-]=[0,b^+)$ and $[a^+,1] = (a^-,1]$ we get that $\widetilde{\pi}$ is continuous. Notice that each of the intervals $(a,b]$, $(0,b]$ or $(a,1]$ is the image of $(0,1]$ by an inverse branch of $f^n|_{(0,1]}$, and denote by $J$ any of these intervals. Since there is no other point of $\pi^{-1}\left( \bigcup_{k=0}^{n-1} f^{-k}(\{x_1,\ldots,x_{d-1}\})\right)$ in $\widetilde{J}$ we have that
    \begin{equation}
        \text{diam}(\widetilde{J})\leq \text{diam}(J) + d^{-n}.
    \end{equation}
    By Lemma \ref{diam to 0} we have that diam$(J) \to 0$ as the length of $w$ goes to $+\infty$, then for every $\omega$ in $\{0,\ldots,d-1\}^{\N_0}$ there is a unique $x\in X$ such that $\widetilde{\pi}(x) = \omega$. Since $X$ is compact, we get that $\widetilde{\pi}$ is a homeomorphism, and since from the definition of $\widetilde{\pi}$ we have $\widetilde{\pi}\circ \widetilde{f}= \sigma\circ \widetilde{\pi}$, we conclude that the map $\widetilde{\pi}$ is a conjugacy.

    Observe that a subset $B$ of $X$ is the same as the set $\pi(B)$ in $[0,1]$ with the points in $\pi(B)\cap Y$ duplicated. Then the Borel $\sigma$-algebra of $X\setminus \widetilde{Y}$ is the same as the Borel $\sigma$-algebra of $[0,1]\setminus Y$. Since the set $Y$ does not support any invariant Borel probability measure for $f$, the same holds for $\widetilde{Y}$ and $\widetilde{f}.$ We deduce that for every invariant Borel probability measure $\mu$ for $\widetilde{f}$ we have that $\pi_*\mu$ is equal to the extension to the Borel $\sigma$-algebra of $[0,1]$ of the restriction of $\mu$ to the Borel $\sigma$-algebra of $X\setminus \widetilde{Y}$, which coincides with the Borel $\sigma$-algebra of $[0,1]\setminus Y$. On the other hand, for every invariant Borel probability measure $\nu$ for $f$, there is a unique $\mu$ in $\mathcal{M}_{\widetilde{f}}$ such that $\pi_*\mu = \nu$. Therefore, the dynamical systems $(X,\widetilde{f},\mu)$ and $([0,1],f,\nu)$ are measurably isomorphic. This finishes the proof of item 5 and the proof of the lemma.
\end{proof}

Let $(X,\widetilde{f})$ and $\pi$ be given by Lemma \ref{extension lemma}, and let $\varphi$ be branch continuous with respect to $f$. We can extend the potential $\varphi$ to $X$ as follows. Define $\widetilde{\varphi}:X\to\R$ by $\widetilde{\varphi}(x) \= \varphi(\pi(x))$ if $x\in X\setminus \pi^{-1}(Y)$, and for $y\in Y$,
    \begin{equation}\label{eq extended potential}
       \widetilde{\varphi}(y^{-}) \= \lim_{x\to y^{-}} \varphi(x),\hspace{3mm}  \widetilde{\varphi}(y^{+}) \= \lim_{x\to y^{+}} \varphi(x).
    \end{equation} 
    Note that the potential $\widetilde{\varphi}$ is continuous since the possible discontinuities of $\varphi$ are contained in $Y$.
    Since $Y$ is a zero measure set for every $f$-invariant probability measure, by part 5 of Lemma \ref{extension lemma} we have
    \begin{equation}\label{tree pressure eq1}
    \begin{split}
       P(\widetilde{\varphi},\widetilde{f}) &= \sup\left\{h_{\mu}(\widetilde{f}) + \int \widetilde{\varphi} \dd\mu : \mu \in \mathcal{M}_{\widetilde{f}}\right\}\\
       &= \sup\left\{h_{\nu}(f) + \int \varphi \dd\nu : \nu\in \mathcal{M}_f\right\} = P(\varphi).
    \end{split}
    \end{equation}
 Let $P_0 \= [0,x_1^-]$, $P_{d-1} \= [x_{d-1}^+,1]$, and for every $k\in\{1,\ldots,d-2\}$, let $P_k \= [x_k^{+},x_{k+1}^-]$. We define $\mathcal{C}\=\{P_0,\ldots,P_{d-1}\}$.

\begin{lemm}\label{tree pressure}
    Let $\varphi:[0,1]\to\R$ be branch continuous with respect to $f$. Then the following hold.
    \begin{enumerate}
    \item[1.] We have
    \begin{equation}\label{eq lemma tree pressure part 1}
        P(\varphi) = \lim_{n\to +\infty}\frac{1}{n} \log \sum_{P\in \bigvee_{k=0}^{n-1}f^{-k} \mathcal{P}} \sup_{x\in P}\exp(S_n\varphi(x)).
    \end{equation}
    \item[2.] For every $y\in(0,1)$,
    \begin{equation}\label{eq lemma tree pressure part 2}
        P(\varphi) = \lim_{n\to+\infty} \frac{1}{n} \log \sum_{x\in f^{-n}(y)} \exp(S_n \varphi(x)).
    \end{equation}
    \end{enumerate}
\end{lemm}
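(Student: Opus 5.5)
We transfer both formulas to the extension $(X,\widetilde f)$ from Lemma \ref{extension lemma}, where the classical theory for continuous maps on compact metric spaces applies. By item 4 of Lemma \ref{extension lemma}, $(X,\widetilde f)$ is conjugate to the full shift on $d$ symbols via $\widetilde\pi$. The sets $P_0,\ldots,P_{d-1}$ of $\mathcal C$ are exactly the preimages of the length-one cylinders. Hence $\bigvee_{k=0}^{n-1}\widetilde f^{-k}\mathcal C$ is the family of $n$-cylinders, each a closed interval $\widetilde Q$ in $X$. By \eqref{tree pressure eq1} we already know $P(\varphi)=P(\widetilde\varphi,\widetilde f)$, and $\widetilde\varphi$ is continuous on $X$.

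\textbf{Part 1.} For a continuous potential on the full shift, $P(\widetilde\varphi,\widetilde f)=\lim_n \frac1n\log\sum_{\widetilde Q}\sup_{\widetilde Q}\exp(S_n\widetilde\varphi)$. This is the usual partition/separated set formula. The key inputs are that $\mathcal C$ is a generator: the diameters of $n$-cylinders go to $0$, which follows from the estimate $\diam(\widetilde J)\le\diam(J)+d^{-n}$ in the proof of Lemma \ref{extension lemma} together with Lemma \ref{diam to 0}. Uniform continuity of $\widetilde\varphi$ then gives $\sup_{\widetilde Q}S_n\widetilde\varphi-\inf_{\widetilde Q}S_n\widetilde\varphi=o(n)$. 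Under these conditions the sup-sum and the inf-sum are comparable up to $e^{o(n)}$, and both are sandwiched between the sums over $(n,\varepsilon)$-separated and $(n,\varepsilon)$-spanning sets.

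It remains to identify the sum over $\widetilde Q$ with the sum over $P\in\bigvee_{k<n}f^{-k}\mathcal P$. There is a bijection $P\leftrightarrow\widetilde Q$ with $\pi(\widetilde Q)=\overline P$. Moreover, $\widetilde Q\setminus\pi^{-1}(Y)$ corresponds to $P\setminus Y$, which is dense in both. By the definition \eqref{eq extended potential} of $\widetilde\varphi$ as one-sided limits, $\sup_{\widetilde Q}S_n\widetilde\varphi=\sup_{P\setminus Y}S_n\varphi$.

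The one subtlety is that $\sup_P$ might exceed $\sup_{P\setminus Y}$ at finitely many points of $Y\cap P$. Here I would use that $\varphi$ is branch continuous. For a boundary point of $P$ lying in $P$, continuity of $\varphi$ on each piece of a refinement (with continuous extension) shows that the value of $S_n\varphi$ is a limit from inside $P$. If refinement at level $N$ is needed, I would bound the discrepancy by a constant factor $e^{C}$ obtained by splitting $S_n$ into at most $N$ terms controlled by $\|\varphi\|_\infty$. This is harmless after taking $\frac1n\log$, since $\varphi$ is bounded by branch continuity.

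\textbf{Part 2.} Fix $y\in(0,1)$. Each $P\in\bigvee_{k<n}f^{-k}\mathcal P$ satisfies $f^n(P)\supseteq(0,1)$, so $P$ contains exactly one $x\in f^{-n}(y)$, and every preimage arises this way. Hence
\[
\sum_{x\in f^{-n}(y)}e^{S_n\varphi(x)}\le\sum_P\sup_P e^{S_n\varphi},
\]
which gives the upper bound by Part 1. For the lower bound, the oscillation estimate gives $\sup_P S_n\varphi\le S_n\varphi(x_P)+o(n)$, where $x_P$ is the preimage of $y$ in $P$. This oscillation bound holds uniformly in $P$, again using the lift to $\widetilde Q$ and uniform continuity of $\widetilde\varphi$.

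The main obstacle is this uniform $o(n)$ oscillation bound. It requires the cylinder diameters in $X$ to shrink uniformly, including near the neutral fixed points, where $S_n\varphi$ accumulates along long stays. On the shift side this is automatic: $\widetilde\varphi\circ\widetilde\pi^{-1}$ is uniformly continuous, so its variation on $m$-cylinders tends to $0$, and Ces\`aro averaging yields $o(n)$. That is the route I would take.
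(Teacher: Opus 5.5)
Your proposal is correct and follows essentially the paper's route. For Part 1 you use the conjugacy of $(X,\widetilde f)$ with the full shift, the standard cylinder formula for the pressure, and identify those cylinder sums with the sums over $\bigvee_{k<n}f^{-k}\mathcal P$; for Part 2 you use one preimage of $y$ per cylinder together with a uniform $o(n)$ bound on the oscillation of $S_n\varphi$ on $n$-cylinders. The paper derives that bound directly on $[0,1]$ as $(n-N)\varepsilon+2N\sup|\varphi|$, from branch continuity and Lemma \ref{diam to 0}, rather than from uniform continuity on the shift.

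One small correction: $Y\cap P$ is dense in $P$, not finite, and jumps of $S_n\varphi|_P$ can also occur at interior points of $P$. This does not affect your argument, because splitting off the last $N$ terms of $S_n$ gives $\sup_P S_n\varphi\le\sup_{P\setminus Y}S_n\varphi+2N\sup|\varphi|$ at every point. That comparison up to a bounded factor is enough, and is slightly more careful than the paper's unproved ``it is clear'' exact equality.
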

\begin{proof}
    Under the map $\widetilde{\pi}: X\to\{0,\dots,d-1\}^{\N_0}$ (see Lemma \ref{extension lemma}), the cover $\mathcal{C}=\{P_0,\ldots,P_{d-1}\}$ corresponds to the cover $\overline{\mathcal{C}}\=\{[0],\ldots, [d-1]\}$. 
Hence,
\begin{equation}\label{tree pressure eq2}
\begin{split}
    P(\widetilde{\varphi},\widetilde{f}) &= P(\widetilde{\varphi}\circ \widetilde{\pi}^{-1},\sigma ) = \lim_{n\to+\infty}\frac{1}{n}\log \sum_{P\in \bigvee_{j=0}^{n-1}\sigma^{-j}\overline{\mathcal{C}}} \sup_{x\in P}\exp\left(\sum_{k=0}^{n-1} \widetilde{\varphi}(\widetilde{\pi}^{-1}(\sigma^k(x)))\right) \\
    &=\lim_{n\to+\infty}\frac{1}{n}\log \sum_{P\in \bigvee_{j=0}^{n-1}\sigma^{-j}\overline{\mathcal{C}}} \sup_{x\in P}\exp\left(\sum_{k=0}^{n-1}\widetilde{\varphi}(\widetilde{f}^k(\widetilde{\pi}^{-1}(x)))\right)\\
    &= \lim_{n\to+\infty}\frac{1}{n}\log \sum_{P\in\bigvee_{j=0}^{n-1}\widetilde{f}^{-j}\mathcal{C}} \sup_{x\in P} \exp\left(\sum_{k=0}^{n-1} \widetilde{\varphi}(\widetilde{f}^k(x))\right).
    \end{split}
\end{equation}
From the definition of $\widetilde{\varphi}$, it is clear that 
\begin{equation*}
\sum_{P\in\bigvee_{j=0}^{n-1}\widetilde{f}^{-j}\mathcal{C}} \sup_{x\in P} \exp\left(\sum_{k=0}^{n-1} \widetilde{\varphi}(\widetilde{f}^k(x))\right) =  \sum_{P\in \bigvee_{k=0}^{n-1}f^{-k} \mathcal{P}} \sup_{x\in P}\exp(S_n\varphi(x)),
\end{equation*}
for every $n$ in $\N$. Therefore, from \eqref{tree pressure eq1} and \eqref{tree pressure eq2} we obtain \eqref{eq lemma tree pressure part 1}. This proves part 1.

For part 2, let $y\in(0,1)$ and $\varepsilon>0$. Since $\varphi$ is branch continuous, there exists $N_0 \in \mathbb{N}$ such that $\varphi$ is continuous on each atom of $\bigvee_{j=0}^{N_0-1} f^{-j}\mathcal{P}$ and admits continuous extensions to their boundaries. By Lemma \ref{diam to 0}  and uniform continuity, there exists $N\geq N_0$ such that, for every atom $P$ of $\bigvee_{j=0}^{N-1} f^{-j}\mathcal{P}$ and every $x_1,x_2\in P$, one has
\begin{equation*}
   |\varphi(x_1) - \varphi(x_2)|<\varepsilon. 
\end{equation*}
This implies that for every $n> N$, and every $x$ and $z$ in the same atom of $\bigvee_{j=0}^{n-1} f^{-j}\mathcal{P}$, 
\begin{equation*}
\begin{split}
    |S_n\varphi(x) - S_n\varphi(z)| &\leq \sum_{k=0}^{n-(N+1)} |\varphi(f^k(x)) - \varphi(f^k(z))| + \sum_{k=n-N}^{n-1} |\varphi(f^k(x)) - \varphi(f^k(z))|\\
    &\leq (n-N)\varepsilon + 2N\sup|\varphi|. 
    \end{split}
\end{equation*}
Thus, if $n> N$ and $P \in \bigvee_{k=0}^{n-1} f^{-k}\mathcal{P}$,
\begin{equation*}
    \sup_{x\in P}S_n\varphi(x) - S_n\varphi(z) \leq (n-N)\varepsilon + 2N\sup|\varphi|
\end{equation*}
for every $z\in P$. Since $f^{-n}(y)$ has exactly one element on each $P\in \bigvee_{k=0}^{n-1} f^{-k}\mathcal{P}$, for $n>N$ we have
\begin{equation*}
    \frac{1}{n} \log \sum_{P\in \bigvee_{k=0}^{n-1}f^{-k} \mathcal{P}} \sup_{x\in P}\exp(S_n\varphi(x)) \leq \frac{(n-N)\varepsilon + 2N\sup|\varphi|}{n} + \frac{1}{n} \log \sum_{x\in f^{-n}(y)}\exp(S_n\varphi(x)).
\end{equation*}
It follows that 
\begin{equation*}
    \lim_{n\to+\infty}  \frac{1}{n}\log \sum_{P\in \bigvee_{k=0}^{n-1}f^{-k} \mathcal{P}} \sup_{x\in P}\exp(S_n\varphi(x)) \leq \varepsilon + \liminf_{n\to+\infty} \frac{1}{n} \log \sum_{x\in f^{-n}(y)} \exp(S_n \varphi(x)).
\end{equation*}
Since $\varepsilon>0$ is arbitrary, we get
\begin{equation}\label{equation tree pressure lower bound}
     \lim_{n\to+\infty}  \frac{1}{n}\log \sum_{P\in \bigvee_{k=0}^{n-1}f^{-k} \mathcal{P}} \sup_{x\in P}\exp(S_n\varphi(x)) \leq \liminf_{n\to+\infty} \frac{1}{n} \log \sum_{x\in f^{-n}(y)} \exp(S_n \varphi(x)).
\end{equation}
Conversely, since each point of $f^{-n}(y)$ belongs to one atom of $\bigvee_{j=0}^{n-1} f^{-j}\mathcal{P}$, we have
\begin{equation}\label{equation tree pressure upper bound}
    \lim_{n\to+\infty}  \frac{1}{n}\log \sum_{P\in \bigvee_{k=0}^{n-1}f^{-k} \mathcal{P}} \sup_{x\in P}\exp(S_n\varphi(x)) \geq \limsup_{n\to+\infty} \frac{1}{n} \log \sum_{x\in f^{-n}(y)} \exp(S_n \varphi(x)).
\end{equation}
Combining \eqref{eq lemma tree pressure part 1}, \eqref{equation tree pressure lower bound}, and \eqref{equation tree pressure upper bound} we obtain \eqref{eq lemma tree pressure part 2}. This completes the proof of the lemma. 
\end{proof}
\begin{rema}\label{r: pressure via transfer op for the extension}
    An analogous argument to that given in the proof of part 2 of Lemma \ref{tree pressure} shows that for every continuous function $\psi: X \to \R$,
    \begin{equation*}
        P(\psi,\widetilde{f}) = \lim_{n\to+\infty}\frac{1}{n}\log \sum_{x\in \widetilde{f}^{-n}(y)} \exp(S_n\psi(x)) \text{ for every } y\in X.
    \end{equation*}
\end{rema}

\section{Proof of Theorem \ref{t: conformal measure}}\label{sec: proof of theo A} 

In this section we prove Theorem \ref{t: conformal measure}. We divide the proof into several lemmas.

\noindent Let $( X, \widetilde{f})$ be the continuous extension of $([0,1],f)$ given by Lemma \ref{extension lemma}, and let $\widetilde{\varphi}: X\to \R$ be the potential defined in \eqref{eq extended potential}. Also, recall the definition of the set ${Y}$ given by Lemma \ref{extension lemma}.

Given a continuous potential $\psi: X\to \R$, we define the \emph{transfer operator} $\widetilde{\mathcal{L}}_{\psi}$ acting on the set of bounded functions defined in $X$ as the bounded operator given by 
\begin{equation}\label{transfer operator}
   \widetilde{\mathcal{L}}_{\psi}h(y) \= \sum_{x\in \widetilde f^{-1}(y)} \exp(\psi(x)) h(x).
\end{equation}
By Lemma \ref{extension lemma}, $(X,\widetilde{f})$ is topologically conjugate to the full shift $(\{0,\dots,d-1\}^{\N_0},\sigma)$. Hence, $\widetilde{\mathcal{L}}_{\psi}h$ is continuous for each continuous function $h:X\to \R$. Moreover, by Remark \ref{r: pressure via transfer op for the extension},
\begin{equation}\label{e: pressure via transfer operator}
\begin{split}
    P(\psi,\widetilde{f}) &= \lim_{n\to+\infty}\frac{1}{n}\log\sum_{P\in \bigvee_{j=0}^{n-1}\widetilde{f}^{-j}\mathcal C} \sup_{y\in P} \exp(S_n\psi(y)) \\
    &= \lim_{n\to+\infty}\frac{1}{n}\log \widetilde{\mathcal{L}}_{\psi}1(x) \text{ for every } x\in X.
    \end{split}
\end{equation}
\begin{defi}\label{defi conformal measure 2}
   Let $M$ be a compact metric space and $T: M \to M$ be a Borel measurable map on $M$. Given a Borel measurable function $g: M\to [0,+\infty)$, a Borel probability measure $\nu$ on $M$ is $g$\emph{-conformal for} $T$ if for each Borel measurable set $A$ on which $T$ is injective we have
 \begin{equation*}
  \nu(T(A)) = \int_A g\dd\nu.
\end{equation*}
\end{defi}
\begin{lemm}\label{lemm existence conformal measure}
There exists an $\exp(P(\widetilde\varphi,\widetilde{f}) - \widetilde\varphi)$-conformal measure for $\widetilde{f}$.
\end{lemm}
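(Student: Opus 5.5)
The plan is the standard Schauder--Tychonoff (or Patterson--Sullivan / Denker--Urbański) fixed point argument on the compact space $X$. By Lemma \ref{extension lemma}, $X$ is a compact metric space and $\widetilde{f}$ is continuous, and $\widetilde{\varphi}$ is continuous. We will also use that $\widetilde{\mathcal{L}}_{\widetilde{\varphi}}$ maps $C(X)$ into $C(X)$, which holds because $(X,\widetilde f)$ is conjugate to the full shift. Consequently the dual operator $\widetilde{\mathcal{L}}^*_{\widetilde{\varphi}}$ acts on Borel measures. On the convex, weak$^*$-compact set $\mathcal{M}(X)$ of Borel probability measures, we define
\[
\Phi(\nu)\=\frac{\widetilde{\mathcal{L}}^*_{\widetilde{\varphi}}\nu}{\int \widetilde{\mathcal{L}}_{\widetilde{\varphi}}1\dd\nu}.
\]
This map is well defined because $\widetilde{\mathcal{L}}_{\widetilde{\varphi}}1\geq \exp(\min\widetilde\varphi)>0$, and it is weak$^*$ continuous because $\widetilde{\mathcal{L}}_{\widetilde{\varphi}}h$ is continuous for continuous $h$. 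The Schauder--Tychonoff theorem then gives a fixed point $\nu$, so that $\widetilde{\mathcal{L}}^*_{\widetilde{\varphi}}\nu=\lambda\nu$ with $\lambda=\int\widetilde{\mathcal{L}}_{\widetilde{\varphi}}1\dd\nu>0$.

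Next we show that such a $\nu$ is $\lambda e^{-\widetilde\varphi}$-conformal. Take a Borel set $A$ contained in one of the cylinders $P_k\in\mathcal{C}$. Each $P_k$ is clopen, and $\widetilde f|_{P_k}$ is a homeomorphism onto $X$ (via the conjugacy with the full shift). Applying $\widetilde{\mathcal{L}}^*_{\widetilde{\varphi}}\nu=\lambda\nu$ to the function $\mathds{1}_A e^{-\widetilde\varphi}$ gives $\int \widetilde{\mathcal{L}}_{\widetilde\varphi}(\mathds{1}_Ae^{-\widetilde\varphi})\dd\nu=\nu(\widetilde f(A))$, hence $\nu(\widetilde f(A))=\lambda\int_A e^{-\widetilde\varphi}\dd\nu$. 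This identity is first obtained for continuous functions and then extended to bounded Borel functions by a monotone class argument, which is legitimate since the operator identity passes to bounded Borel functions by dominated convergence. For a general set $A$ on which $\widetilde f$ is injective, we split $A$ into the pieces $A\cap P_k$. Their images are pairwise disjoint because $\widetilde f$ is injective on $A$, so additivity gives the conformality relation on $A$.

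It remains to identify $\lambda=\exp(P(\widetilde\varphi,\widetilde f))$. Iterating the eigen-relation gives $\lambda^n=\int\widetilde{\mathcal{L}}^n_{\widetilde\varphi}1\dd\nu$. By the same argument as in the proof of part 2 of Lemma \ref{tree pressure}, there is a bounded distortion statement: for $x,x'\in X$, the sums $\widetilde{\mathcal{L}}^n_{\widetilde\varphi}1(x)$ and $\widetilde{\mathcal{L}}^n_{\widetilde\varphi}1(x')$ differ by a factor at most $\exp(n\varepsilon+C_\varepsilon)$. This follows from the uniform continuity of $\widetilde\varphi$ together with the fact that cylinders of $\bigvee_{j=0}^{n-1}\widetilde f^{-j}\mathcal{C}$ shrink, which is where the metric from Lemma \ref{extension lemma} is used. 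In fact, \eqref{e: pressure via transfer operator} already gives $\frac1n\log\widetilde{\mathcal{L}}^n_{\widetilde\varphi}1(x)\to P(\widetilde\varphi,\widetilde f)$ for every $x$, and, because of the sandwich between these pointwise values and the cylinder sum, the convergence is uniform in $x$. Integrating against $\nu$ then yields $\frac1n\log\lambda^n\to P(\widetilde\varphi,\widetilde f)$, so $\lambda=e^{P(\widetilde\varphi,\widetilde f)}$.

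The main obstacle is the uniformity in $x$ needed to interchange the limit with the integral, together with the extension of the conformality identity from continuous test functions to indicator functions. I would handle the uniformity through the two-sided estimate against the cylinder sum $\sum_P\sup_P e^{S_n\widetilde\varphi}$. Every preimage point lies in a distinct cylinder, which gives the upper bound, and the almost-constancy of $S_n\widetilde\varphi$ on cylinders gives the lower bound, exactly as in Lemma \ref{tree pressure}.
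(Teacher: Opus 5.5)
Your proposal is correct and follows essentially the same route as the paper: a Schauder--Tychonoff eigenmeasure $\widetilde{\mathcal{L}}^*_{\widetilde{\varphi}}\widetilde{m}=\lambda\widetilde{m}$, then the identification $\lambda=\exp(P(\widetilde{\varphi},\widetilde{f}))$ through $\lambda^n=\int\widetilde{\mathcal{L}}^n_{\widetilde{\varphi}}1\dd\widetilde{m}$, then conformality. The only differences are minor. For the lower bound on $\log\lambda$, the paper uses concavity of $\log$ together with dominated convergence and the pointwise limit, where you use a uniform two-sided sandwich against the cylinder sum. The paper also leaves the conformality step to ``standard arguments'', which you carry out explicitly with the test functions $\mathds{1}_A e^{-\widetilde{\varphi}}$ and a monotone class argument.
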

\begin{proof}
By the Schauder--Tychonoff fixed point theorem (see \cite[p. 456]{DunfordSchwartz}), there exists a Borel probability measure $\widetilde m$ and a number $\lambda>0$ such that $\widetilde{\mathcal{L}}_{\widetilde\phi}^{*}\widetilde m=\lambda\widetilde m$. Then we have
     \begin{equation*}
         (\widetilde{\mathcal{L}}_{\widetilde{\varphi}}^*)^n \widetilde{m} = \lambda^n \widetilde{m}
     \end{equation*}
     for every $n\in\N$. It follows that
     \begin{equation}\label{eq1 conformal measure}
         \int \widetilde{\mathcal{L}}_{\widetilde{\varphi}}^n 1 \dd \widetilde{m} = \lambda^n.
     \end{equation}
     We get that for every $n\in \N$,
    \begin{equation*}
        \log \lambda =  \frac{1}{n}\log \int \widetilde{\mathcal{L}}_{\widetilde{\varphi}}^n 1 \dd\widetilde{m} \leq  \frac{1}{n}\log\sum_{P\in \bigvee_{j=0}^{n-1}\widetilde{f}^{-j}\mathcal C} \sup_{y\in P} \exp(S_n\widetilde{\varphi}(y)),
    \end{equation*}
   and by \eqref{e: pressure via transfer operator},
\begin{equation}\label{e: log lambda upper bound}
    \log\lambda \leq P(\widetilde{\varphi}, \widetilde{f}).
\end{equation}
    On the other hand, since $t\mapsto \log t$ is concave, from \eqref{eq1 conformal measure} and the dominated convergence theorem, we obtain
    \begin{equation*}
        \log \lambda \geq \lim_{n\to+\infty}\int \frac{1}{n} \log \widetilde{\mathcal{L}}_{\widetilde{\varphi}}^n 1 \dd \widetilde{m} = P(\widetilde{\varphi},\widetilde{f}).
    \end{equation*}
    Together with \eqref{e: log lambda upper bound} we conclude that $\lambda = \exp(P(\widetilde\varphi,\widetilde{f}))$, and then $\widetilde{\mathcal{L}}_{\widetilde\varphi}^* \widetilde{m} = \exp(P(\widetilde\varphi,\widetilde{f}))\widetilde{m}$. Using standard arguments, one can show that $\widetilde{m}$ is $\exp(P(\widetilde\varphi,\widetilde{f})-\widetilde\varphi)$-conformal for $\widetilde{f}$.
      \end{proof}

      \begin{lemm}\label{l: dense preimage tree}
          Let $\xi \in[0,1]$ be a fixed point of $f$. If $f^{-1}(\xi) \neq \{\xi\}$, then the preimage tree $\bigcup_{n=1}^{+\infty}f^{-n}(\xi)$ is a dense subset of $[0,1]$. In particular, if $\xi \not\in\{0,1\}$, then $\bigcup_{n=1}^{+\infty}f^{-n}(\xi)$ is dense in $[0,1]$.
      \end{lemm}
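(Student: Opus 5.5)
The plan is to produce a preimage of $\xi$ different from $\xi$ itself, and then use the shrinking of cylinders to show that its further preimages accumulate everywhere.

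Suppose $f^{-1}(\xi)\neq\{\xi\}$ and pick $z\in f^{-1}(\xi)$ with $z\neq\xi$.

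The first step is to get a seed point in the open interval $(0,1)$. If $z\in(0,1)$ we use $z$ directly. If $z\in\{0,1\}$, say $z=0$, we pass to a preimage of $0$ lying in $(0,1)$. Since $d\ge 2$ and each branch is full, every branch $f_k$ with $k\ge 2$ has a preimage of $0$ in $\overline{I}_k$. If this preimage is an endpoint of $I_k$, we instead take a preimage under an interior branch, or a second-level preimage. Concretely, $h_2(0)=y_1^2$ or lies in $\overline I_2$, and any point of $(0,1)$ mapping into the tree suffices. For instance, choose $w\in(0,1)$ with $f(w)\in\{0,1\}$ lying in the tree, which is possible because the points $x_1,\dots,x_{d-1}$ are interior and $f$ maps them (one-sidedly) to $0$ or $1$. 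This bookkeeping is the only slightly delicate point. The cleanest choice is to note that all of $f^{-1}(\{0,1\})\cap(0,1)\supseteq\{x_1,\dots,x_{d-1}\}$ is nonempty, together with the convention for the value of $f$ at the $x_k$. So in every case we obtain a point $w\in(0,1)\cap\bigcup_{n\ge1}f^{-n}(\xi)$.

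The second step is density. Let $U\subseteq[0,1]$ be open. By Lemma \ref{diam to 0}, as in Remark \ref{rema topologically exact}, there are $N\in\N$ and an atom $Q\in\bigvee_{j=0}^{N-1}f^{-j}\mathcal P$ with $Q\subseteq U$, and $f^N$ maps $\operatorname{int}Q$ bijectively onto $(0,1)$. Hence there is $q\in\operatorname{int}Q$ with $f^N(q)=w$, so $q\in f^{-(N+m)}(\xi)\cap U$, where $w\in f^{-m}(\xi)$. Therefore the tree meets every open set, which proves density.

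For the final claim, let $\xi\notin\{0,1\}$. Since $d\ge2$ and every branch is full, $\xi$ has a preimage in each $\overline I_k$, or at least in $d\ge2$ distinct branches. Only one of these preimages can equal $\xi$, because $\xi$ lies in a single $I_k$. Hence $f^{-1}(\xi)\neq\{\xi\}$, and the first part applies. Moreover, when $\xi\in(0,1)$, every preimage of $\xi$ lies in $(0,1)$ or is an endpoint handled as above. The main obstacle is therefore just the endpoint bookkeeping in the first step; the rest follows directly from Remark \ref{rema topologically exact}.
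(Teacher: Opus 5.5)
Your overall route is the paper's: find a point of $f^{-1}(\xi)$ in $(0,1)$, then use topological exactness on $(0,1)$ (Remark~\ref{rema topologically exact}) to pull it back into any open set. Your density step and your argument for $\xi\notin\{0,1\}$ are fine. The problem is the endpoint case of your first step, which you flag as delicate and handle with reasoning that does not hold. The point $f_k^{-1}(0)=x_{k-1}$ is always an endpoint of $I_k$, and it is a preimage of $0$ under $f$ only if $x_{k-1}\in I_k$. For example, if every $x_k$ belongs to $I_k$, then $f^{-1}(0)=\{0\}$, so no iterated preimage of $0$ lies in $(0,1)$. Also, a point $x_k$ with $f(x_k)=1$ is not in the tree unless $1$ is. So ``pass to a preimage of $0$ lying in $(0,1)$'' cannot be justified in general, and the observation that $f^{-1}(\{0,1\})\cap(0,1)$ is nonempty does not put any such point in the tree.

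The missing observation is that this case never occurs. Since $Df\geq 1>0$ on every branch, each branch is increasing and full. With $0\in I_1$ and $1\in I_d$, this gives $f(0)=f_1(0)=0$ and $f(1)=f_d(1)=1$. Hence if $z\in f^{-1}(\xi)\cap\{0,1\}$, then $\xi=f(z)=z$. So any $z\in f^{-1}(\xi)\setminus\{\xi\}$ automatically lies in $(0,1)$. This is exactly the paper's one-line argument: $1-\xi$ is a fixed point, so $f^{-1}(\xi)\subseteq\{0,1\}$ would force $f^{-1}(\xi)=\{\xi\}$. Replace your bookkeeping with this, and the proof is complete and coincides with the paper's.
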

      \begin{proof}
        Assume that $f^{-1}(\xi) \neq \{\xi\}$. Then $f^{-1}(\xi)\cap(0,1)\neq \emptyset$. Indeed, if $f^{-1}(\xi) \subseteq \{0,1\}$, then $\xi \in\{0,1\}$ and $f(1-\xi)=\xi$, and this is a contradiction since $1-\xi$ is a fixed point.

        Let $x\in f^{-1}(\xi)\cap (0,1)$ and let $U\subseteq [0,1]$ be an open interval. By Remark \ref{rema topologically exact}, the map $f$ is topologically exact on (0,1), that is, there exists $N\in\N$ such that $(0,1) \subseteq f^{N}(U)$, and hence $f^{-N}(x)\cap U \neq \emptyset$. Since $x\in f^{-1}(\xi)$, we conclude that $\bigcup_{n=1}^{+\infty}f^{-n}(\xi)$ is dense in $[0,1]$. Finally, if $\xi \not\in \{0,1\}$, then $f^{-1}(\xi)\neq \{\xi\}$, and by the same argument we obtain that the preimage tree of $\xi$ is dense, concluding the proof of the lemma.
      \end{proof}

\begin{lemm}\label{lemma atomic conformal measure}
    Assume that $P(\varphi)= \varphi(\xi)$ for some fixed point $\xi$ with $f^{-1}(\xi) \neq \{\xi\}$, and such that
    \begin{equation}\label{eq series conformal measures}
        \sum_{n=1}^{+\infty}\sum_{x\in f^{-n}(\xi)\setminus\bigcup_{j=1}^{n-1}f^{-j}(\xi)} \exp(S_n\varphi(x) - nP(\varphi)) <+\infty.
    \end{equation}
    There exists an $\exp(P(\varphi)-\varphi)$-conformal measure $m$ with 
    \begin{equation}\label{eq1 atomic conformal measure}
        m\left(\bigcup_{n=1}^{+\infty}f^{-n}(\xi)\right) =1 \text{ and } m(\{x\})>0 \text{ for every }x \in \bigcup_{n=1}^{+\infty}f^{-n}(\xi).
    \end{equation}
    In particular, the measure $m$ is fully supported.
\end{lemm}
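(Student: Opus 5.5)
We construct $m$ explicitly as a weighted sum of Dirac masses on the backward orbit of $\xi$, using the series \eqref{eq series conformal measures} as normalizing constant. Let $T:=\bigcup_{n\ge 1}f^{-n}(\xi)$. For $x\in T$ let $n(x)$ be the least $n\ge 1$ with $f^{n}(x)=\xi$; for $x\ne\xi$ this is the first hitting time. Note that $\xi\in T$ with $n(\xi)=1$, since $\xi$ is fixed.

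Define the weight
\[
w(x):=\exp\bigl(S_{n(x)}\varphi(x)-n(x)P(\varphi)\bigr)\quad\text{for } x\in T\setminus\{\xi\},\qquad w(\xi):=1.
\]
The sets $\{x: n(x)=n\}$ are exactly $f^{-n}(\xi)\setminus\bigcup_{j=1}^{n-1}f^{-j}(\xi)$ (with $\xi$ removed). Hence $Z:=\sum_{x\in T}w(x)$ is dominated by $1$ plus the series \eqref{eq series conformal measures}, so $Z<+\infty$. Set
\[
m:=Z^{-1}\sum_{x\in T}w(x)\,\delta_x .
\]
Then $m(T)=1$ and every point of $T$ has positive mass, giving \eqref{eq1 atomic conformal measure}. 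Full support follows from Lemma \ref{l: dense preimage tree}, since $T$ is dense.

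The main step is conformality. Since $m$ is purely atomic on $T$, it suffices to check $m(\{f(x)\})=e^{P(\varphi)-\varphi(x)}m(\{x\})$ for every $x\in[0,1]$; countable additivity over $A\cap T$ then gives $\nu(f(A))=\int_A g\,dm$ whenever $f|_A$ is injective. For $x\notin T$ we have $f(x)\notin T$ unless $f(x)=\xi$, which would put $x\in T$; so both sides vanish.

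For $x\in T$ we split into three cases.
\begin{itemize}
\item If $x\ne\xi$ and $f(x)\ne\xi$, then $n(f(x))=n(x)-1$, so $w(f(x))=w(x)\,e^{P(\varphi)-\varphi(x)}$.
\item If $x\ne\xi$ and $f(x)=\xi$, then $n(x)=1$ and $w(x)=e^{\varphi(x)-P(\varphi)}$, so $w(f(x))=1=w(x)e^{P(\varphi)-\varphi(x)}$.
\item If $x=\xi$, the identity reads $1=e^{P(\varphi)-\varphi(\xi)}$, which holds precisely because $P(\varphi)=\varphi(\xi)$. This is where the hypothesis is used.
\end{itemize}

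One subtlety is that the notion of $g$-conformality in the paper is stated for arbitrary Borel $A$ on which $f$ is injective, with $g=\exp(P(\varphi)-\varphi)$ evaluated pointwise. So the atoms only need the pointwise identity, and no continuity of $\varphi$ is required. Likewise, only $f(T\setminus\{\xi\})\subseteq T$ is used, not surjectivity.

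I expect the only real obstacle to be bookkeeping: confirming that $n(\cdot)$ is well defined and decreases by one along orbits, which needs care because $\xi$ itself lies in every $f^{-n}(\xi)$. That is why $\xi$ is assigned weight $1$ by hand rather than through the hitting-time formula.
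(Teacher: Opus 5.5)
Your proposal is correct and follows essentially the same approach as the paper: the same purely atomic measure on the preimage tree of $\xi$, with weight $1$ at $\xi$ and $\exp(S_n\varphi(x)-nP(\varphi))$ at first-hitting time $n$, normalized using the convergent series, and full support obtained from Lemma \ref{l: dense preimage tree}. Your atom-by-atom verification of conformality, including the case $x=\xi$ where $P(\varphi)=\varphi(\xi)$ is used, spells out what the paper dismisses as holding ``by construction.''
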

\begin{proof}
  We define a measure $\widehat{m}$ as follows. Put $\widehat{m}(\{\xi\})\=1$. For every $n\in \N$ and every $x\in f^{-n}(\xi)\setminus \bigcup_{j=1}^{n-1}f^{-j}(\xi)$, define
  \begin{equation}\label{eq defintion fully atomic conformal measure}
      \widehat{m}(\{x\})\= \exp(S_n\varphi(x) - nP(\varphi)).
  \end{equation}
  Finally, we define 
  \begin{equation*}
      \widehat{m}\left([0,1] \setminus \bigcup_{n=1}^{+\infty}f^{-n}(\xi)\right) \=0.
  \end{equation*}
  Then, we have
  \begin{equation*}
      \widehat{m}([0,1]) =1 +\sum_{n=1}^{+\infty}\sum_{x\in f^{-n}(\xi)\setminus\bigcup_{j=1}^{n-1}f^{-j}(\xi)} \exp(S_n\varphi(x) - nP(\varphi)). 
  \end{equation*}
  Define $m\= \widehat{m}/\widehat{m}([0,1])$. Since $P(\varphi)=\varphi(\xi)$, the measure $m$ is, by construction, an $\exp(P(\varphi)-\varphi)$-conformal measure satisfying \eqref{eq1 atomic conformal measure}. Finally, since $f^{-1}(\xi)\neq \{\xi\}$, by Lemma \ref{l: dense preimage tree} the preimage tree $\bigcup_{n=1}^{+\infty}f^{-n}(\xi)$ is dense in $[0,1]$, and therefore $m$ is fully supported.
\end{proof}

\begin{lemm}\label{lemma atoms of conformal measure}
    Let $m$ be an $\exp(P(\varphi)-\varphi)$-conformal measure for $f$. If $\varphi$ is hyperbolic for $f$, then $m$ is atom-free.
\end{lemm}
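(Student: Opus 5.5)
Suppose, towards a contradiction, that $m(\{x_0\})>0$ for some $x_0\in[0,1]$. The plan is to exploit the conformality relation along forward and backward orbits of the atom. Then we compare the mass created this way with the hyperbolicity bound $\sup_x\frac1n S_n\varphi(x)<P(\varphi)$ for some $n\geq 1$. Throughout, write $P\=P(\varphi)$ and fix such an $n$. Set $\delta>0$ with $S_n\varphi(x)\le n(P-\delta)$ for all $x$.

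\emph{Step 1: forward orbit.} Since $f$ is injective on singletons, conformality gives, for every $j\ge 0$,
\[
m(\{f^{j}(x_0)\})=\exp\bigl(jP-S_j\varphi(x_0)\bigr)\,m(\{x_0\}).
\]
Along multiples $j=kn$, the cocycle property gives $S_{kn}\varphi(x_0)=\sum_{i<k}S_n\varphi(f^{in}x_0)\le kn(P-\delta)$. Hence $m(\{f^{kn}(x_0)\})\ge e^{kn\delta}m(\{x_0\})\to+\infty$. This contradicts $m$ being a probability measure.

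This step works directly, and it seems to finish the proof without any case analysis. One subtlety is that $\varphi$ may be discontinuous, but the hyperbolicity supremum is pointwise over all $x\in[0,1]$, so it applies at every point $f^{in}(x_0)$. Another point is that the forward orbit may be eventually periodic. This causes no problem, since the identity above holds regardless: it concerns the masses of the points $f^{kn}(x_0)$, which must each be $\le 1$, and we never need the points to be distinct. For instance, if $x_0=\xi$ is a fixed point, the formula gives $1=\exp(P-\varphi(\xi))$, i.e.\ $\varphi(\xi)=P$. This contradicts $\varphi(\xi)\le \sup\frac1nS_n\varphi<P$.

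\emph{Step 2: justify conformality on singletons.} I would check that $\{x\}$ is a Borel set on which $f$ is injective, so $m(\{f(x)\})=e^{P-\varphi(x)}m(\{x\})$ follows immediately from the definition. Iterating this along the orbit gives the formula in Step 1. The only thing I expect to need care with is confirming that the definition of conformality in the introduction applies to arbitrary Borel sets, including singletons at discontinuity points $x_k$. It does as stated, so no appeal to the extension $(X,\widetilde f)$ is needed.

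I expect no serious obstacle: the main point is simply that hyperbolicity forces atom masses to grow exponentially along forward orbits.
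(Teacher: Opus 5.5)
Your proposal is correct and is essentially the paper's argument. Conformality on singletons gives $m(\{f^{kN}(x)\})\ge \exp\bigl(k(NP(\varphi)-\sup S_N\varphi)\bigr)\,m(\{x\})$, which grows without bound unless $m(\{x\})=0$; your remarks on eventually periodic orbits and on singletons at discontinuity points are harmless checks rather than needed steps.
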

\begin{proof} Let $x\in [0,1]$. By conformality,
      \begin{equation}\label{eq2 conformal measure}
          m(\{f^{n}(x)\})= \exp(nP(\varphi) - S_{n} \varphi(x))m(\{x\}) \text{ for every }n\in\N.
      \end{equation}
   If $\varphi$ is hyperbolic for $f$, there exists $N\in \N$ such that
   \begin{equation}\label{e: conformality lemma 3.4}
       P(\varphi) - \sup \frac{1}{N} S_N \varphi >0.
   \end{equation}
      Notice that $S_{kN}\varphi(x) = S_N\varphi(x) + S_N\varphi(f^N(x))+\dots S_N \varphi(f^{(k-1)N}(x))$ for every $k\in\N$. Then, from \eqref{eq2 conformal measure}, we get
      \begin{equation}\label{eq 3 conformal measure}
          m(\{f^{kN}(x)\}) \geq \exp(k(NP(\varphi) - \sup S_N \varphi))m(\{x\}) \text{ for every }k\in\N.
      \end{equation}
    It follows that $m(\{x\})=0$; otherwise, by \eqref{e: conformality lemma 3.4} and \eqref{eq 3 conformal measure} we obtain \begin{equation*}
        m(\{f^{kN}(x)\})\xrightarrow[]{} +\infty \text{ as }k\to +\infty,
    \end{equation*}
      which contradicts the finiteness of $m$. Thus, the measure $m$ is atom-free provided that $\varphi$ is hyperbolic for $f$, completing the proof of the lemma.
\end{proof}
\begin{proof}[Proof of Theorem \ref{t: conformal measure}]
Let $\widetilde{m}$ be the $\exp(P(\widetilde{\varphi},\widetilde{f})-\widetilde{\varphi})$-conformal measure for $\widetilde{f}$ given by Lemma \ref{lemm existence conformal measure}. Let us first assume that $\widetilde{m}(\pi^{-1}(Y))=0$. In this case, the pushforward $m\= \pi_*\widetilde{m}$ is an $\exp(P(\varphi)-\varphi)$-conformal measure for $f$ with $m(Y)=0$, and then by conformality, we have $m((0,1))=1$. Let $U$ be an open subset of $[0,1]$, and let $N\in \N$ and $Q\in  \bigvee_{j=0}^{N-1}f^{-j}\mathcal{P}$ be as in Remark \ref{rema topologically exact}. Since $f^N$ is injective on $Q$ and $m$ is $\exp(P(\varphi)-\varphi)$-conformal for $f$,
\begin{equation*}
    1=m((0,1))= \int_{Q} \exp(NP(\varphi)-S_N\varphi) \dd m.
\end{equation*}
This implies that $m(Q)>0$ and since $Q$ is contained in $U$, we conclude that $m(U)>0$. That is, the measure $m$ has full support.

Now, assume that $\widetilde{m}(\pi^{-1}(Y))>0$. The conformality of $\widetilde{m}$ implies that $\widetilde{m}(\{0,1\})>0$. Fix $\xi\in \{0,1\}$ such that $\widetilde{m}({\{\xi\}})>0$. Using the conformality of $\widetilde{m}$,
\begin{equation}\label{e: finite series extension}
     \sum_{n=0}^{+\infty} \sum_{x\in \widetilde{f}^{-n}(\xi)\setminus \bigcup_{j=0}^{n-1}\widetilde{f}^{-j}(\xi)} \exp(S_n\widetilde{\varphi}(x) - nP(\widetilde{\varphi},\widetilde{f})) <+\infty.
\end{equation}
Since $\varphi$ is branch continuous with respect to $f$, there exists $N\in \N$ such that the restriction $\varphi|_{P}$ is continuous for every $P\in \bigvee_{j=0}^{N-1}f^{-j}\mathcal{P}$. Define 
\begin{equation}
    Y_N \= \bigcup_{j=0}^{N}f^{-j}(\{0,1\})\setminus\{0,1\}.
\end{equation}
Then,
\begin{equation}\label{e: tilde(varphi)=varphi}
    \widetilde{\varphi}(y^+) = \widetilde{\varphi}(y^-) = \varphi(y) \text{ for every }y\in Y\setminus Y_N.
\end{equation}
Let
\begin{equation*}
M\= \max_{y\in Y_N}\max\{|\varphi(y) - \widetilde{\varphi}(y^{-})|,|\varphi(y) - \widetilde{\varphi}(y^{+})|\},
\end{equation*}
which is a real number since $Y_N$ is finite.
By Lemma \ref{extension lemma} and \eqref{e: tilde(varphi)=varphi} we obtain that for every $n>N$ and every $x\in \widetilde{f}^{-n}(\xi)\setminus \bigcup_{j=0}^{n-1}\widetilde{f}^{-j}(\xi)$,
\begin{equation*}
    \begin{split}
        S_n\widetilde{\varphi}(x) &= S_{n-N}\widetilde{\varphi}(x) + S_N\widetilde{\varphi}(\widetilde{f}^{n-N}(x))\\
        & \geq S_{n-N}\varphi(\pi(x)) + S_{N}\varphi(\pi(\widetilde{f}^{n-N}(x))) - NM\\
        &= S_{n-N}\varphi(\pi(x)) + S_{N}\varphi(f^{n-N}(\pi(x))) - NM\\
        &= S_n\varphi(\pi(x)) - NM,
    \end{split}
\end{equation*}
and then, using \eqref{tree pressure eq1},
\begin{equation}\label{e: bound S_nvarphi by S_n tilde(varphi)}
    \exp(S_n\varphi(\pi(x)) - nP(\varphi)) \leq \exp(NM)\exp(S_n\widetilde{\varphi}(x)-nP(\widetilde{\varphi},\widetilde{f})).
\end{equation}
Since $f^{-n}(\{0,1\})\setminus\{0,1\} = \pi(\widetilde{f}^{-n}(\xi))\setminus\{\xi\}$ for each $n\in \N$,
combining \eqref{e: finite series extension} and \eqref{e: bound S_nvarphi by S_n tilde(varphi)} yields
\begin{equation*}
     \sum_{n=1}^{+\infty}\sum_{x\in f^{-n}(\xi)\setminus\bigcup_{j=1}^{n-1}f^{-j}(\xi)} \exp(S_n\varphi(x) - nP(\varphi)) <+\infty,
\end{equation*}
and
\begin{equation}\label{e: series 2 proof theorem A}
     \sum_{n=1}^{+\infty}\sum_{x\in f^{-n}(1-\xi)\setminus\bigcup_{j=1}^{n-1}f^{-j}(1-\xi)} \exp(S_n\varphi(x) - nP(\varphi)) <+\infty.
\end{equation}
Furthermore, since $\widetilde{m}$ is $\exp(P(\widetilde\varphi)-\widetilde{\varphi})$-conformal, $\widetilde{\varphi}(\xi)=\varphi(\xi)$, and $P(\widetilde\varphi) = P(\varphi)$, we have $P(\varphi)= \varphi(\xi)$.

If $f^{-1}(\xi)\neq \{\xi\}$, then by Lemma \ref{lemma atomic conformal measure}, there exists a fully supported $\exp(P(\varphi)- \varphi)$-conformal measure. On the other hand, if $f^{-1}(\xi)=\{\xi\}$, the measure $\delta_{\xi}$ is $\exp(P(\varphi)-\varphi)$-conformal, and moreover the convergence of the series in \eqref{e: series 2 proof theorem A} is equivalent to that of the series in \eqref{e: eq 1 thm A}. This proves the first assertion and part 3 of the theorem.

Part 1 follows from Lemma \ref{lemma atoms of conformal measure}. We are left to prove part 2.

Assume that $m$ is an $\exp(P(\varphi)-\varphi)$-conformal measure which is not fully supported. Then there exists an open set $U\subset [0,1]$ such that $m(U)=0$. Remark \ref{rema topologically exact}, together with the conformality of $m$, implies that $m((0,1))=0$, and hence $m(\{0,1\})=1$.

Let $\xi \in \{0,1\}$ be such that $m(\{\xi\})>0$. If $f^{-1}(\xi) \neq \{\xi\}$, by Lemma \ref{l: dense preimage tree} the preimage tree $\bigcup_{n=1}^{+\infty} f^{-n}(\xi)$ is dense in $[0,1]$, which implies that $m$ has full support, a contradiction. Hence $f^{-1}(\xi)=\{\xi\}$.

It follows that $f^{-1}(1-\xi) \neq \{1-\xi\}$, and therefore, again by Lemma \ref{l: dense preimage tree} we must have $m(\{1-\xi\})=0$. Consequently, $m=\delta_{\xi}$. By conformality, we have $P(\varphi)= \varphi(\xi)$, and by Lemma \ref{key lemma} we obtain that $\xi$ is a maximizing neutral fixed point. This completes part 2 and concludes the proof of Theorem \ref{t: conformal measure}.
\end{proof}
As an application of Theorem \ref{t: conformal measure} and \cite[Theorem 6]{Dobbs}, we obtain the following result, which will be used in the proof of Theorem \ref{Theo A}.
\begin{prop}\label{p: before proof thm C}
    Let $f \in \mathscr{F}$, and let $\varphi \in BH^\gamma(f)$ for some $\gamma \in (0,1]$. If $\varphi$ admits an equilibrium state with positive entropy, then 
      \begin{equation}\label{e: conv series section 5}
        \sum_{n=0}^{+\infty}\sum_{x\in f^{-n}(\{x_1,\dots,x_{d-1}\})}\exp(S_n\varphi(x)- nP(\varphi))=+\infty.
    \end{equation}
    In particular, there exists a fully supported $\exp(P(\varphi)-\varphi)$-conformal measure.
\end{prop}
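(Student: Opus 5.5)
We prove the contrapositive: assuming the series \eqref{e: conv series section 5} converges, we show that every equilibrium state of $\varphi$ has zero entropy. The fully supported conformal measure will then be a by-product.

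The first step is to reduce to an atomic conformal measure. Suppose the series converges. Then either $\varphi$ is hyperbolic or it is not. If $\varphi$ were hyperbolic, then by the proof of Lemma \ref{lemma atoms of conformal measure} there is $N$ with $NP(\varphi)-\sup S_N\varphi>0$. Along the preimage tree of any discontinuity point the terms $\exp(S_n\varphi(x)-nP(\varphi))$ summed over $f^{-n}$ of a point grow like $\exp(n(P(\varphi)-P(\varphi)))$ up to subexponential factors, by Lemma \ref{tree pressure} part 2. So the sums over each level are not summable, since $\frac1n\log\sum_{x\in f^{-n}(y)}\exp(S_n\varphi(x)-nP(\varphi))\to 0$ only gives subexponential control. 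This is not immediately a contradiction.

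I would therefore argue more directly. Convergence of \eqref{e: conv series section 5} lets us build, as in Lemma \ref{lemma atomic conformal measure}, a purely atomic measure carried by $\bigcup_n f^{-n}(\{x_1,\dots,x_{d-1}\})$. To make it $\exp(P(\varphi)-\varphi)$-conformal, we need the atoms $x_k$ to be consistent with their images $f(x_k)\in\{0,1\}$, and this forces $P(\varphi)=\varphi(\xi)$ at the endpoint fixed points involved (or the Dirac mass at $\xi\in\{0,1\}$, via Theorem \ref{t: conformal measure}). Precisely, normalize $\widehat m(\{0\})$ and $\widehat m(\{1\})$ and set $\widehat m(\{x\})=\exp(S_n\varphi(x)-nP(\varphi))\cdot\widehat m(\{f^n x\})$ along the tree. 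The convergent series guarantees finiteness. Conformality at $0$ and $1$ requires $e^{\varphi(0)-P}=1$ (resp.\ at $1$) when $0$ (resp.\ $1$) is not in the image of the other tree. In any case, we obtain an $\exp(P(\varphi)-\varphi)$-conformal probability measure $m$ that is purely atomic. By Lemma \ref{lemma atoms of conformal measure}, $\varphi$ is then not hyperbolic, so Proposition \ref{prop characterization of hyperbolicity} gives $P(\varphi)=\max_{k\in N(f)}\varphi(\xi_k)$.

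The second step applies \cite[Theorem 6]{Dobbs}. That result says, roughly: if $\mu$ is an ergodic equilibrium state with positive entropy (hence positive Lyapunov exponent) for a potential with a conformal measure $m$ of the appropriate Jacobian, then $\mu\ll m$. Let $\mu$ be an equilibrium state with $h_\mu>0$. We may pass to an ergodic component with positive entropy, since ergodic components of equilibrium states are equilibrium states. We need $\varphi$ to satisfy Dobbs's regularity hypotheses; the branch Hölder property and the distortion bounds of Lemma \ref{lemma bound Df^n} should supply this, possibly after working on the extension $(X,\widetilde f)$ of Lemma \ref{extension lemma}. Applying it with the purely atomic $m$ gives $\mu\ll m$. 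Then $\mu$ is carried by the countable set $\bigcup_n f^{-n}(\{0,1,x_1,\dots,x_{d-1}\})$. An invariant measure on a countable set is atomic on periodic orbits and so has zero entropy, which is a contradiction. Hence the series diverges.

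For the last claim, the divergence of \eqref{e: conv series section 5} combined with part 3 of Theorem \ref{t: conformal measure} shows that the conformal measure is not uniquely $\delta_\xi$. By part 2 of that theorem at most one conformal measure fails to be fully supported, and conformal measures exist, so a fully supported one exists.

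The main obstacle is the second step: verifying that \cite[Theorem 6]{Dobbs} applies to our possibly discontinuous maps and potentials. It is also delicate to construct the atomic conformal measure consistently at both endpoints when $f^{-1}(\xi)\neq\{\xi\}$. A cleaner route may be to first apply Theorem \ref{t: conformal measure} to get $P(\varphi)=\varphi(\xi)$ and then use Lemma \ref{lemma atomic conformal measure}.
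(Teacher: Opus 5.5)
Your overall architecture matches the paper's, and your final paragraph is exactly the paper's conclusion. You assume \eqref{e: conv series section 5} converges, build a fully supported purely atomic measure on the preimage tree of the discontinuities, invoke \cite[Theorem 6]{Dobbs} to get $\mu\ll m$, and contradict $h_\mu>0$; you then deduce the fully supported conformal measure from parts 2 and 3 of Theorem~\ref{t: conformal measure}.

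The gap is in your first step. You cannot in general obtain a genuine $\exp(P(\varphi)-\varphi)$-conformal probability measure on $[0,1]$ carried by $\{0,1\}\cup\bigcup_n f^{-n}(\{x_1,\dots,x_{d-1}\})$. Since $f(x_k)\in\{0,1\}$, conformality gives $m(\{f(x_k)\})=e^{P(\varphi)-\varphi(x_k)}m(\{x_k\})$. So some $\xi\in\{0,1\}$ must carry mass, and conformality at that fixed point then forces $P(\varphi)=\varphi(\xi)$. Nothing available at that stage of your argument gives this. As far as you have shown, convergence of the series is compatible with $P(\varphi)>\max\{\varphi(0),\varphi(1)\}$, for instance when the only maximizing neutral fixed point is interior. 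In that case no such measure exists, so ``in any case, we obtain \ldots'' is unjustified. Your fallback does not repair this either. Part 3 of Theorem~\ref{t: conformal measure} says that if $\delta_\xi$ is the unique conformal measure then the series converges; it does not say that convergence yields $P(\varphi)=\varphi(\xi)$. And Lemma~\ref{lemma atomic conformal measure} requires convergence of a different series, over the preimage tree of $\xi$.

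The missing idea is that global conformality is not needed. The paper's construction runs as follows:
\begin{itemize}
\item Fix $N$ such that $\varphi$ is Hölder on each atom of $\bigvee_{j=0}^{N-1}f^{-j}\mathcal{P}$.
\item Restrict $f$ to the union of the interiors of these atoms; this is a weak piecewise monotone cusp map in Dobbs's sense.
\item Set $m(\{x\})=\exp(S_n\varphi(x)-nP(\varphi))$ only for $x\in f^{-n}(\{x_1,\dots,x_{d-1}\})$ with $n\ge N$, and let $m$ vanish elsewhere.
\end{itemize}
The convergent series makes $m$ finite, and Lemma~\ref{l: dense preimage tree} makes it fully supported. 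It is $(\exp(P(\varphi)-\varphi),0)$-conformal in the sense of \cite[Definition 2]{Dobbs}, which concerns only the restricted map. So the endpoints $0,1$ and the shallow levels of the tree, where consistency fails, are simply outside the domain, and no relation between $P(\varphi)$ and $\varphi(0),\varphi(1)$ is required. The same restriction to $N$-cylinders is what places $\varphi$ within the hypotheses of \cite[Theorem 6]{Dobbs}, which addresses the other obstacle you flagged. Positivity of the Lyapunov exponent comes from Ruelle's inequality. Your opening discussion of the hyperbolic case can be dropped.
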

\begin{proof}
    Let $\mu$ be an equilibrium state of $\varphi$ with positive entropy, and assume for the sake of contradiction that the series in \eqref{e: conv series section 5} converges. Since $\varphi \in BH^\gamma(f)$, there exists $N\in \N$ such that $\varphi|_P$ is Hölder-continuous for each $P\in \bigvee_{j=0}^{N-1}f^{-j}\mathcal{P}$. Consider the Borel measure $m$ on $\bigcup_{P\in \bigvee_{j=0}^{N-1}f^{-j}\mathcal{P}}\text{int }P$ given by
    \begin{equation}
      m(\{x\})\= \exp(S_n\varphi(x) - nP(\varphi)) \text{ for } x\in f^{-n}(\{x_1,\dots,x_{d-1}\}), n\geq N,
  \end{equation}
  and 
  \begin{equation*}
      m(B)=0 \text{ for every Borel set } B\subseteq \left( \bigcup_{P\in \bigvee_{j=0}^{N-1}f^{-j}\mathcal{P}}\text{int } P\right) \setminus \bigcup_{j=N}^{+\infty} f^{-j}(\{x_1,\dots,x_{d-1}\}).
  \end{equation*}
  The restriction of $f$ to $\bigcup_{P\in \bigvee_{j=0}^{N-1}f^{-j}\mathcal{P}}\text{int }P$ is a weak piecewise monotone cusp map in the sense of \cite[Definition 1]{Dobbs}, the measure $m$ is $(\exp(P(\varphi)-\varphi),0)$-conformal in the sense of \cite[Definition 2]{Dobbs}, and by Lemma \ref{l: dense preimage tree} it is fully supported. Thus, by \cite[Theorem 6]{Dobbs} we have $\mu\ll m$, and therefore the measure $\mu$ is atomic, which contradicts the fact that $h_{\mu}>0$. Hence, \eqref{e: conv series section 5} holds. Finally, by \eqref{e: conv series section 5} and Theorem \ref{t: conformal measure} there exists a fully supported $\exp(P(\varphi)- \varphi)$-conformal measure.
\end{proof}

\section{Keller spaces}\label{sec: Keller spaces}
In this section we recall some properties of the Banach spaces defined in \S\ref{subseq: Keller spaces}. Again, we follow the exposition of \cite{Juan1}, where a more detailed version of this section can be found.

Given $p\geq 1$ and a function $h\colon I\to\C$, put
\begin{equation*}
    \text{Var}_p(h)\= \sup\left\{\left(\sum_{i=1}^k |h(x_i)-h(x_{i-1})|^p\right)^{\frac{1}{p}} \colon k\geq 1,x_0,\ldots,x_k\in I, x_0<\ldots<x_k\right\},
\end{equation*}
and define
\begin{equation*}
    \lVert h \rVert_{BV_p} \= \text{Var}_p(h) + \lVert h \rVert_{\infty}. 
\end{equation*}
We say the function $h$ is of \emph{bounded $p$-variation} if $\lVert h \rVert_{BV_p}<+\infty$. Denote by $BV_p$ the space of all functions of bounded $p$-variation defined on $I$. Then $\lVert\cdot\rVert_{BV_p}$ is a norm on $BV_p$ and $(BV_p, \lVert\cdot\rVert_{BV_p} )$ is a Banach space. 

By \cite[Proposition 4.1]{Juan1}, we have that
\begin{equation}\label{contencion2 Keller}
    \lVert h \lVert_{\gamma,1}\leq 2^{\gamma}\lVert h \rVert_{BV_{1/\gamma}} \text{ for every } h\in BV_{1/\gamma},
\end{equation}
and there exists a constant $C_*>0$ such that
\begin{equation}\label{contencion3 Keller}
    \lVert h \rVert_{\infty} \leq C_* \lVert h \rVert_{\gamma,1} \text{ for every } h\in H^{\gamma,1}(m).
\end{equation}
Let $N\geq 2$ be an integer, and $\mathcal{P}\=\{I_1,\ldots,I_N\}$ be a partition of $I$ into intervals. Let $T\colon I\to I$ be a transformation on $I$ that is continuous and monotone on each $I_i\in \mathcal{P}$.

According to Definition \ref{branch Holder}, we say a function $h\colon I\to \R$ is \emph{branch Hölder-continuous of exponent $\gamma$ with respect to} $T$, if there is some $n$ in $\N$ such that for every $P \in \bigvee_{j=0}^{n-1}T^{-j}\mathcal{P}$, the restriction $h|_{P}$ is Hölder-continuous of exponent $\gamma$ and can be continuously extended to the boundary of $P$. We denote by $BH^{\gamma}(T)$ the set of all branch Hölder-continuous functions of exponent $\gamma$ with respect to $T$.

 Notice that every function in $BH^{\gamma}(T)$ is also of bounded $1/\gamma$-variation. Hence, together with \eqref{contencion2 Keller} and  \eqref{contencion3 Keller} we obtain
\begin{equation}\label{Holder is BV}
    BH^{\gamma}(T)\subset BV_{1/\gamma}\subset H^{\gamma,1}(m)\subset L^{\infty}(m).
\end{equation}
\begin{rema}
    In \eqref{Holder is BV} the second inclusion is an abuse of notation, as elements in $H^{\gamma,1}(m)$ are equivalence classes. Thus, the inclusion $BV_{1/\gamma}\subset H^{\gamma,1}(m)$ means that for every $h \in BV_{1/\gamma}$, the class of functions that are equal to $h$ almost everywhere with respect to $m$ is contained in $H^{\gamma,1}(m)$.
\end{rema}

Fix $p\geq 1$, let $g\colon I\to [0,+\infty)$ be a function of bounded $p$-variation, and let $\mathscr{L}_g$ be the operator defined by
\begin{equation}\label{transfer operator 2}
    \mathscr{L}_g(h)(x)\= \sum_{y\in T^{-1}(x)} h(y)g(y) = \sum_{i\in\{1,\ldots N\}, x\in T(I_i)
    } (h\cdot g)\circ (T_{|_{I_i}})^{-1}(x),
\end{equation}
for every bounded $h:I\to \C$.

Assume in addition that the measure $m$ satisfies
the following properties:
\begin{enumerate}[label=\textbf{H\arabic*.}, ref=H\arabic*]
    \item\label{itm H1} For each $I_i\in\mathcal{P}$, the map $T|_{I_i}^{-1}$ is non-singular with respect to $m$, so that for every subset $E$ of $I_i$ of measure zero, the set $(T|_{I_i}^{-1})^{-1}(E)= T(E)$ is also of measure zero;
    \item\label{itm H2} On a set of full measure with respect to $m$, we have
    \begin{equation*}
        g^{-1} = \sum_{i=1}^{N} \frac{\dd(T|^{-1}_{I_i})_{*} m}{\dd m};
    \end{equation*}
    \item\label{itm H3} For each bounded and measurable $h: I\to \C$ we have $\int_I \mathscr{L}_g(h)\dd m = \int_{I} h \dd m$, and $\mathscr{L}_g$ extends to a positive linear map from $L^{1}(m)$ to itself satisfying $\lVert\mathscr{L}_g(h)\rVert_1 \leq \lVert h\rVert_1$.
\end{enumerate}
The following theorem compiles results from \cite[Theorems 3.2 and 3.3]{Keller1}, and is established in \cite[Theorem 1]{Juan1}.
\begin{theo}\label{keller theo}
       Let $T, g, \mathscr{L}_g$, and $m$ be as above, and assume that there is an integer $n\geq 1$ such that the function
\begin{equation*}
    g_n(x) \= g(x)\cdots g(T^{n-1}(x))
\end{equation*}
satisfies $\sup_I g_n <1$. Then the following properties hold.
\begin{enumerate}
    \item[1.] The set $\mathscr{E}$ of eigenvalues of $\mathscr{L}_g|_{L^1(m)}$ of modulus 1 is finite. Moreover, for each $\lambda \in\mathscr{E}$, the space
    \begin{equation*}
        E(\lambda)\= \{h\in L^1(m) : \mathscr{L}_g(h) = \lambda h\}
    \end{equation*}
    is contained in $H^{1/p,1}(m)$ and it is of finite dimension.
    \item[2.] If for each $\lambda\in \mathscr{E}$ we denote by $\mathscr{P}(\lambda)$ the projection in $L^1(m)$ to $E(\lambda)$, then the operator 
    \begin{equation*}
        \mathscr{Q}\= \mathscr{L}_g - \sum_{\lambda\in \mathscr{E}}\lambda\mathscr{P}(\lambda)
    \end{equation*}
    satisfies $\sup\{\lVert \mathscr{Q}^n\rVert_1: n\geq0 \text{ integer}\}<+\infty$. Moreover, $\mathscr{Q}$ maps $H^{1/p,1}(m)$ to itself, and there is $\rho\in(0,1)$ and  a constant $M>0$ such that for every integer $n\geq0$ we have $\lVert \mathscr{Q}^n\lVert_{1/p,1}\leq M\rho^n$. Finally, for each $\lambda \in \mathscr{E}$ the operators $\mathscr{Q}\mathscr{P}(\lambda)$ and $\mathscr{P}(\lambda)\mathscr{Q}$ are both identically zero, and for each $\lambda'\in \mathscr{E}$ different from $\lambda$ the operators $\mathscr{P}(\lambda)\mathscr{P}(\lambda')$ and $\mathscr{P}(\lambda')\mathscr{P}(\lambda)$ are also identically zero.
    \item[3.] The set $\mathscr{E}$ contains $1$, and if we put $h\= \mathscr{P}(1)(\boldsymbol{1})$, then $\nu\= hm$ is a probability measure that is invariant by $T$ and that is an equilibrium state of $T$ for the potential $\log g$. Moreover, if $\eta$ is a $T$-invariant probability measure such that $\eta \ll m$, then $\eta \ll \nu$.
\end{enumerate}
\end{theo}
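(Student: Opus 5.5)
The plan is to follow Keller's quasi-compactness strategy and apply the Ionescu-Tulcea--Marinescu theorem (or Hennion's version) to the pair of norms $\lVert\cdot\rVert_1\le\lVert\cdot\rVert_{1/p,1}$. Write $\gamma\=1/p$.

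\emph{Step 1: boundedness in $L^1(m)$.} First replace $T$ by $T^n$ and $g$ by $g_n$. The operator $\mathscr{L}_{g_n}$ equals $\mathscr{L}_g^n$, and $g_n$ is again of bounded $p$-variation with respect to the finer partition $\bigvee_{j=0}^{n-1}T^{-j}\mathcal{P}$. So we may assume $\sup g<1$. Properties \ref{itm H1}--\ref{itm H3} say that $m$ is $g^{-1}$-conformal and that $\mathscr{L}_g$ is the $L^1(m)$-dual of composition with $T$. Hence $\lVert\mathscr{L}_g^k\rVert_1\le 1$ for all $k$, and $\int\mathscr{L}_g h\dd m=\int h\dd m$.

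\emph{Step 2: Lasota--Yorke inequality.} This is the core of the argument. I would estimate $\operatorname{osc}_1(\mathscr{L}_g h,\varepsilon)$ branch by branch. On a branch $I_i$ the term $(hg)\circ(T|_{I_i})^{-1}$ has local oscillation at $x$ bounded by
\[
\sup g\cdot\operatorname{osc}(h,\varepsilon,\cdot)+|h|\operatorname{osc}(g,\varepsilon,\cdot),
\]
evaluated at the preimage. The preimage of a $d$-ball of radius $\varepsilon$ is a $d$-ball of radius at most $\sup g\cdot\varepsilon\le\varepsilon$, by conformality. To integrate, change variables back with \ref{itm H2}.

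Several boundary effects must be controlled. The endpoints of the branches $I_i$ produce jump terms of size at most $\lVert h\rVert_\infty\varepsilon$ times a constant. The oscillation of $g$ is handled via $\operatorname{Var}_p(g)$: by H\"older's inequality, $\int\operatorname{osc}(g,\varepsilon,x)\dd m\lesssim\varepsilon^{\gamma}\operatorname{Var}_p(g)$. The sup norm is then traded for the Keller norm using \eqref{contencion3 Keller}.

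Because the constant in front of $|h|_{\gamma,1}$ picks up boundary contributions, iterate (again passing to a power) to reach
\[
|\mathscr{L}_g^k h|_{\gamma,1}\le \theta\, |h|_{\gamma,1}+B\lVert h\rVert_1,\qquad\theta<1 .
\]
I expect this step to be the main obstacle. The delicate points are choosing the cutoff $A$ and bounding $\lVert h\rVert_\infty$ in terms of $\lVert h\rVert_1$ and $|h|_{\gamma,1}$ (via \eqref{contencion3 Keller} at scale $A$) without destroying the contraction factor.

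\emph{Step 3: compactness and spectral decomposition.} Keller's Theorem 1.13 gives that $H^{\gamma,1}(m)$ is Banach. It also gives that the unit ball of $H^{\gamma,1}(m)$ is relatively compact in $L^1(m)$, since functions with uniformly small $\operatorname{osc}_1$ are $L^1$-approximable by step functions on a fixed $m$-partition. Ionescu-Tulcea--Marinescu then yields the following: finitely many peripheral eigenvalues of modulus one, finite-dimensional eigenspaces contained in $H^{\gamma,1}(m)$, and spectral projections $\mathscr{P}(\lambda)$ extending to $L^1(m)$ by density and the $L^1$-contraction. The remainder $\mathscr{Q}$ has spectral radius less than one on $H^{\gamma,1}(m)$ and bounded $L^1$ powers, which gives items 1 and 2. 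The mutual annihilation relations $\mathscr{P}(\lambda)\mathscr{Q}=0$ and so on are part of that theorem's conclusion.

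\emph{Step 4: equilibrium state.} Since $\int\mathscr{L}_g\boldsymbol 1\dd m=1$, Ces\`aro averages of $\mathscr{L}_g^k\boldsymbol 1$ converge to $h=\mathscr{P}(1)\boldsymbol 1\ge0$ with $\int h\dd m=1$, so $1\in\mathscr{E}$. Invariance of $\nu=hm$ follows from duality. For the equilibrium property, note that $g^{-1}$ is the Jacobian of $\nu$ relative to $m$, so the Rokhlin formula gives $h_\nu(T)=-\int\log g\dd\nu$ and hence $h_\nu+\int\log g\dd\nu=0$. The fact that $\sup g_n<1$ gives the expansivity needed to ensure $P(\log g)\le 0$, using conformality of $m$ and the generating partition. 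Finally, if $\eta\ll m$ is invariant, then $\frac{\dd\eta}{\dd m}$ is a fixed point of $\mathscr{L}_g$ in $L^1(m)$, so it lies in $E(1)$. Positivity together with the ergodic decomposition of $E(1)$ then gives $\eta\ll\nu$.
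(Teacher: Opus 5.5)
The paper does not prove Theorem~\ref{keller theo}. It quotes it from \cite[Theorems 3.2 and 3.3]{Keller1}, as restated in \cite[Theorem 1]{Juan1}. Your outline is Keller's strategy: a Lasota--Yorke inequality for $\operatorname{osc}_1$, $L^1$-compactness of the unit ball, Ionescu-Tulcea--Marinescu, then an entropy computation. Steps 1 and 3 are fine, modulo checking that the unit ball of $H^{\gamma,1}(m)$ is closed in $L^1(m)$, which the theorem you invoke requires. Steps 2 and 4 do not go through as written.

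\emph{Step 2.} The bookkeeping you describe loses the contraction.
\begin{itemize}
\item If you put $\sup g$ in front of $\operatorname{osc}(h,\varepsilon,y)$, with $y=(T|_{I_i})^{-1}x$, and change variables with H2, the Jacobian appears: $\int_{T(I_i)}\operatorname{osc}(h,\varepsilon,(T|_{I_i})^{-1}x)\dd m(x)=\int_{I_i}\operatorname{osc}(h,\varepsilon,y)\,g(y)^{-1}\dd m(y)$. Already for constant $g$ this returns $\operatorname{osc}_1(h,\varepsilon)$ with coefficient $1$, and iterating does not help.
\item The fix is to keep the pointwise weight. With $\delta=\varepsilon\sup g$,
\[
\operatorname{osc}\bigl((hg)\circ(T|_{I_i})^{-1},\varepsilon,x\bigr)\le g(y)\operatorname{osc}(h,\delta,y)+3\lVert h\rVert_\infty\operatorname{osc}\bigl(g\circ(T|_{I_i})^{-1},\varepsilon,x\bigr),
\]
up to the endpoint jumps. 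Then H3 integrates the first term exactly to $\operatorname{osc}_1(h,\delta)\le(\sup g)^{\gamma}\varepsilon^{\gamma}|h|_{\gamma,1}$. The contraction comes from the shrinking of $d$-balls, which you throw away by writing ``$\le\varepsilon$''.
\item The second term and the endpoint jumps are the real difficulty. Trading $\lVert h\rVert_\infty$ globally via \eqref{contencion3 Keller} puts a factor on $|h|_{\gamma,1}$ that is governed by the $p$-variation of $g_n$ on the branches of $T^n$, not by $\sup g_n$. That factor need not be small, so $\lVert h\rVert_\infty$ has to be localized.
\end{itemize}
You flag this point and leave it open. Since it is the substance of Keller's Lasota--Yorke lemma, items 1--2 are not actually proved by the proposal.

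\emph{Step 4.} The claim that $\sup g_n<1$ forces $P(\log g)\le0$ is false under the stated hypotheses, with the pressure taken as in \eqref{pressure} (a supremum over all $T$-invariant measures). So no expansivity argument can supply it. Here is a counterexample:
\begin{itemize}
\item Let $T(x)=2x$ on $[0,\frac14)$ and $T(x)=2x-\frac12$ on $[\frac14,\frac12]$.
\item Let $T$ map each interval $(\frac12+\frac k8,\frac12+\frac{k+1}8]$, $k=0,\dots,3$, affinely and increasingly onto $(\frac12,1]$.
\item Put $g\equiv\frac12$, and let $m$ be normalized Lebesgue measure on $[0,\frac12]$.
\end{itemize}
Then H1--H3 hold, $h=\boldsymbol 1$, $\nu=m$, and $h_\nu(T)+\int\log g\dd\nu=0$. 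But normalized Lebesgue measure on $(\frac12,1]$ is invariant and gives $\log4-\log2>0$.

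So, read literally, the equilibrium clause needs extra input. Where the paper applies the theorem, $g=\exp(\varphi-P(\varphi))$, so $P(\log g)=0$ by definition. It then only remains to show $h_\nu(T)\ge-\int\log g\dd\nu$. This follows from
\[
h_\nu(T)\ge h_\nu(T,\mathcal P)\ge H_\nu(\mathcal P\mid T^{-1}\mathscr B)=\int\log J_\nu\dd\nu,
\]
where $\mathscr B$ is the Borel $\sigma$-algebra. Here $J_\nu=g^{-1}\,(h\circ T)/h$, not $g^{-1}$ as you wrote, and $\log h\in L^1(\nu)$ because $h$ is bounded. No generating partition is needed for this inequality.

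Finally, for the assertion $\eta\ll\nu$, use the positivity of $\mathscr P(1)$, which is a Ces\`aro limit of the $\mathscr L_g^k$. If $\eta=\rho m$ is invariant, then $\rho\in E(1)\subset L^\infty(m)$, so $\rho=\mathscr P(1)\rho\le\lVert\rho\rVert_\infty h$, and hence $\{\rho>0\}\subset\{h>0\}$.
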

The following corollary can be found in \cite[Corollary 4.4]{Juan1}.
\begin{coro}\label{corollary keller theo}
    Under the assumptions of Theorem \ref{keller theo}, and  assuming in addition that $T$ is topologically exact on $I$, we have the following properties:
\begin{enumerate}
    \item[1.] The number $1$ is an eigenvalue of $\mathscr{L}_g$ of algebraic multiplicity 1. Moreover, there is $\rho \in (0,1)$ such that the spectrum of $\mathscr{L}_g|_{H^{1/p,1}(m)}$ is contained in $B(0,\rho)\cup \{1\}$.
    \item[2.] There is a constant $C>0$ such that for every bounded measurable function $\phi: I\to\C$, and every function $\psi \in H^{1/p ,1}(m)$, the measure $\nu$ given by part $3$ of Theorem \ref{keller theo} satisfies for every integer $n\geq 1$ that
    \begin{equation*}
       \left| \int \phi\circ T^n \cdot \psi \dd\nu - \int \phi \dd\nu\int \psi \dd\nu \right|\leq C\lVert \phi \rVert_{\infty}\lVert \psi\rVert_{1/p,1}\rho^n.
    \end{equation*}
    \item[3.] Given $\psi \in H^{1/p,1}(m)$, for each $\tau\in \C $ the operator $\mathscr{L}_{\tau}$ defined by
    \begin{equation*}
        \mathscr{L}_{\tau}(h) \= \mathscr{L}_g(\exp(\tau\psi)\cdot h)
    \end{equation*}
    maps $H^{1/p,1}(m)$ to itself and the restriction $\mathscr{L}_{\tau}|_{H^{1/p,1}(m)}$ is bounded. Moreover, $\tau\mapsto \mathscr{L}_{\tau}|_{H^{1/p,1}(m)}$ is analytic in the sense of Kato on $\C$, and the spectral radius of $\mathscr{L}_{\tau}|_{H^{1/p,1}(m)}$ depends in a real analytic way on $\tau$ on a neighborhood of $\tau=0$.
\end{enumerate}
\end{coro}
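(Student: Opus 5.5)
The statement immediately above is Corollary \ref{corollary keller theo}, which the paper quotes from \cite[Corollary 4.4]{Juan1}. I would derive it from Theorem \ref{keller theo} by standard spectral arguments, using topological exactness of $T$ to remove the peripheral spectrum.

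\textbf{Step 1: the peripheral spectrum is $\{1\}$ and is simple.}
\begin{itemize}
\item Let $\lambda\in\mathscr{E}$ and $h\in E(\lambda)$. By part 1 of Theorem \ref{keller theo}, $h\in H^{1/p,1}(m)$, hence $h\in L^\infty(m)$ by \eqref{contencion3 Keller}.
\item Positivity gives $|h|=|\mathscr{L}_g h|\le \mathscr{L}_g|h|$. Integrating against $m$ and using \ref{itm H3} forces $\mathscr{L}_g|h|=|h|$. So $|h|$ is a fixed density and $|h|\,m$ is invariant, absolutely continuous with respect to $\nu=h_0 m$, where $h_0=\mathscr{P}(1)(\boldsymbol{1})$.
\item Since $h\in H^{1/p,1}$ has small oscillation on short intervals, $h_0$ is bounded below on some interval $J$.
\item By exactness, $T^n J$ covers $I$ up to finitely many points. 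Iterating $\mathscr{L}_g^n h_0=h_0$ then shows $h_0>0$ $m$-a.e. and that every fixed density is a multiple of $h_0$ (the support of $\nu$ is invariant and open modulo null sets).
\item Equality in $|\mathscr{L}_g h|\le\mathscr{L}_g|h|$ forces $h\circ T=\lambda\,h$ on the support of $h$. Writing $u=h/|h|$ gives $u\circ T=\lambda u$. Exactness (mixing of $T$ with respect to $\nu$) then gives $\lambda=1$ and $u$ constant.
\item So $\mathscr{E}=\{1\}$ and $E(1)$ is one-dimensional.
\item Algebraic multiplicity one follows from the decomposition $\mathscr{L}_g=\mathscr{P}(1)+\mathscr{Q}$ with $\mathscr{P}(1)\mathscr{Q}=\mathscr{Q}\mathscr{P}(1)=0$ and $\|\mathscr{Q}^n\|\le M\rho^n$. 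This decomposition also gives the spectrum in $B(0,\rho)\cup\{1\}$, which is part 1.
\end{itemize}

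\textbf{Step 2: decay of correlations (part 2).}
\begin{itemize}
\item $\mathscr{P}(1)\psi=\big(\int\psi\,dm\big)h_0$. Indeed $\int\mathscr{P}(1)\psi\,dm=\int\psi\,dm$, since $\int\mathscr{Q}^n\psi\,dm=\int\mathscr{L}_g^n(\psi-\mathscr{P}(1)\psi)\,dm\to0$.
\item Write
\[
\int\phi\circ T^n\,\psi\,d\nu=\int\phi\,\mathscr{L}_g^n(\psi h_0)\,dm .
\]
\item Insert $\mathscr{L}_g^n=\mathscr{P}(1)+\mathscr{Q}^n$. The $\mathscr{P}(1)$ term is exactly $\int\phi\,d\nu\int\psi\,d\nu$.
\item The remainder is bounded by $\|\phi\|_\infty\|\mathscr{Q}^n(\psi h_0)\|_1\le \|\phi\|_\infty M\rho^n\|\psi h_0\|_{1/p,1}$.
\item It remains to know that $H^{1/p,1}(m)$ is a Banach algebra up to a constant. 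This follows from the $\operatorname{osc}$ product estimate $\operatorname{osc}(uv)\le\|u\|_\infty\operatorname{osc}(v)+\|v\|_\infty\operatorname{osc}(u)$ together with \eqref{contencion3 Keller}.
\end{itemize}

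\textbf{Step 3: analyticity (part 3).}
\begin{itemize}
\item Multiplication by $e^{\tau\psi}$ is given by the norm-convergent series $\sum\tau^k\psi^k/k!$ in the Banach algebra $H^{1/p,1}(m)$. Hence $\tau\mapsto\mathscr{L}_\tau$ is entire in operator norm.
\item The eigenvalue $1$ of $\mathscr{L}_0$ is simple and isolated by part 1. Kato's analytic perturbation theory then gives an analytic simple eigenvalue $\lambda(\tau)$ near $\tau=0$, with the rest of the spectrum staying in $B(0,\rho')$, $\rho'<1$.
\item Hence the spectral radius equals $|\lambda(\tau)|$ near $0$. For real $\tau$, $\lambda(\tau)$ is real and positive because $\mathscr{L}_\tau$ is a positive operator, so the spectral radius is real analytic there.
\end{itemize}

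\textbf{Main obstacle.} The step I expect to be hardest is Step 1: showing that eigenfunctions for $\lambda\neq1$ cannot exist using only $L^1/H^{1/p,1}$ information. Their regularity holds only almost everywhere, so exactness must be transferred through the measure $m$, and \ref{itm H1} is what guarantees that images of null sets remain null. My first attempt would be to pick a short interval $J$ where $|u-c|<\varepsilon$ on a set of large $m$-measure, which the small oscillation of $h$ provides. I would then push $J$ forward until it covers $I$ and use $u\circ T^n=\lambda^n u$ to get a contradiction unless $\lambda^n\approx1$ for all large $n$.
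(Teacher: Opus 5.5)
The paper gives no proof of this statement. It quotes it from \cite[Corollary 4.4]{Juan1}, so your argument has to stand on its own.

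Steps 2 and 3 are fine, and so is the deduction of part 1 from ``$\mathscr{E}=\{1\}$ and $\dim E(1)=1$''; these are the standard consequences of Theorem \ref{keller theo}. One small correction: if $\psi$ is complex-valued, drop the positivity remark and use that $|\lambda(\tau)|$ is real analytic, since $\lambda(\tau)$ is holomorphic and nonvanishing.

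\textbf{The gap.} It sits at the only point where topological exactness enters: ``Exactness (mixing of $T$ with respect to $\nu$) then gives $\lambda=1$ and $u$ constant.'' Topological exactness does not by itself give mixing, or even weak mixing, of $(T,\nu)$. Excluding a Koopman eigenvalue $\lambda\neq1$ for $\nu$ is essentially what part 1 asserts, so as written this step assumes the conclusion. Two further problems:
\begin{itemize}
\item The equality case of $|\mathscr{L}_g h|\le\mathscr{L}_g|h|$ does not give $h\circ T=\lambda h$. It gives $u=\lambda\,u\circ T$ for the phase $u=h/|h|$. This holds $m$-a.e.\ only after using $g>0$ $m$-a.e.\ (from H2), $|h|>0$ $m$-a.e., and $m(T^{-1}N)=\int\boldsymbol{1}_N\,\mathscr{L}_g\boldsymbol{1}\dd m=0$ for $m$-null $N$.
\item Your fallback under ``Main obstacle'' fails as stated. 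Control of $|u-c|$ on a subset of $J$ of large $m$-measure is lost after applying $T^n$. On a branch of $T^n$ one has $m(T^nE)=\int_E g_n^{-1}\dd m$ with $g_n^{-1}$ unbounded, so the small exceptional set can be inflated to most of $I$.
\end{itemize}

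\textbf{How to close it.} You need an essential-supremum bound on a whole $d$-ball, not control on most of it.
\begin{itemize}
\item Since $\mathscr{L}_g(uh_0)=\mathscr{L}_g(\lambda(u\circ T)h_0)=\lambda uh_0$, you have $uh_0\in E(\lambda)\subset H^{1/p,1}(m)$.
\item Apply Chebyshev's inequality to $\operatorname{osc}_1(uh_0,\varepsilon)+\operatorname{osc}_1(h_0,\varepsilon)=O(\varepsilon^{1/p})$. Combine this with $\int\operatorname{ess\,inf}_{B_d(x,\varepsilon)}h_0\dd m(x)\ge 1-O(\varepsilon^{1/p})$ and $\|h_0\|_\infty<\infty$. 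This gives $x$ such that, on the relatively open interval $B=B_d(x,\varepsilon)$ of positive measure, $h_0\ge 1/2$ and $|u-c_0|<\delta$ hold $m$-a.e.\ for a constant $c_0$ and $\delta=O(\varepsilon^{1/p})$.
\item Exactness forces $T(I)=I$ and gives $n_0$ with $T^n(B)=I$ for all $n\ge n_0$. So $I$ is the union of the sets $T^n(B\cap P)$, $P\in\bigvee_{j=0}^{n-1}T^{-j}\mathcal{P}$.
\item By H1 the images of the $m$-null exceptional subsets of $B\cap P$ are $m$-null. Hence $u\circ T^n=\overline{\lambda}^{\,n}u$ yields $|u-\overline{\lambda}^{\,n}c_0|<\delta$ $m$-a.e.\ on all of $I$, for every $n\ge n_0$.
\item Comparing $n$ and $n+1$, and using $|c_0|>1-\delta$, gives $|1-\lambda|<2\delta/(1-\delta)$. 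Letting $\varepsilon\to0$ gives $\lambda=1$ and $u$ constant.
\end{itemize}

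\textbf{Positivity and uniqueness in $E(1)$.} Do the positivity bullet through invariance of $\nu$ and H1, not by evaluating $\mathscr{L}_g^nh_0$ at points ($h_0$ is only an a.e.\ class and $g$ may vanish). That is, use $\int_Ah_0\dd m=\int_{T^{-n}A}h_0\dd m\ge\tfrac12\, m(T^{-n}A\cap B)>0$ whenever $m(A)>0$. Uniqueness in $E(1)$ then follows by applying this to $w^{\pm}$ for real $w\in E(1)$ with $\int w\dd m=0$. Both $w^{\pm}$ are fixed by $\mathscr{L}_g$ and lie in $H^{1/p,1}(m)$; if both were nonzero they would both be positive a.e., contradicting their disjoint supports.
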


\begin{rema}
    In Corollary \ref{corollary keller theo} the map $T$ is required to be topologically exact on $I$. However, the proof in \cite{Juan1} remains valid under the weaker assumption that $T$ is topologically exact on a subset of 
    $[0,1]$ with full measure for $m$. Since $m$ has no atoms, we have $m(0,1)=1$, and hence Corollary \ref{corollary keller theo} applies to the maps in our family $\mathscr{F}$, as they are topologically exact on $(0,1)$ (see Remark \ref{rema topologically exact}).
\end{rema}

\section{Proofs of Theorems \ref{general theo A} and \ref{Theo A}}\label{sec: Proof of Theo B and C}
We begin this section by proving Theorem \ref{general theo A}. Having proven all the results in \Cref{sec: geometric estimates and top pressure,sec: proof of theo A}, its proof is essentially the same as that of \cite[Theorem B]{Juan1}, except for minor modifications. Then, we deduce Theorem \ref{Theo A} as a consequence of Theorem \ref{general theo A} and Lemma \ref{key lemma}.

\proof[Proof of Theorem \ref{general theo A}]
Since $\varphi$ is hyperbolic for $f$, there is $N\in \N$ such that $\sup \frac{1}{N}S_N\varphi<P(\varphi)$. 

Define the operator $\widehat{\mathcal{L}}_\varphi \= \exp(-P(\varphi))\mathcal{L}_\varphi$, where $\mathcal{L}_{\varphi}$ is the operator defined in \eqref{eq transfer operator 3}. Note that if we put $g\=\exp(\varphi- P(\varphi))$, then $\widehat{\mathcal{L}}_\varphi$ coincides with the operator $\mathscr{L}_{g}$ defined in \eqref{transfer operator 2} with $T$ replaced by $f$, and $I$ replaced by $[0,1]$.

By \eqref{Holder is BV} the function $\varphi$ is of bounded $(1/\gamma)$-variation. Since $\varphi$ is bounded, the constant $C\=\sup \exp(\varphi)$ is finite. So for all $x_1$ and $x_2$ in $[0,1]$, we have
\begin{equation*}
    |\exp(\varphi(x_1)) -\exp(\varphi(x_2))|\leq C|\varphi(x_1)-\varphi(x_2)|.
\end{equation*}
This implies that $\exp(\varphi)$, and so $g$, is of bounded $(1/\gamma)$-variation. On the other hand, since $\sup \frac{1}{N}S_N\varphi < P(\varphi)$, the function $g_N(x):=g(x)\cdots g(f^{N-1}(x))$ satisfies
\begin{equation*}
    \sup_{x\in [0,1]} g_N(x) = \sup_{x\in [0,1]}\exp(S_N\varphi(x) - NP(\varphi))<1.
\end{equation*}
Using that $m$ is a $(g^{-1})$-conformal measure for $f$, replacing $T$ by $f$ and $I$ by $[0,1]$, properties \ref{itm H1} and \ref{itm H2} in \Cref{sec: Keller spaces} hold with $p=1/\gamma$. Moreover, since $m$ is atom-free, then $m(Y)=0$, where $Y\subset[0,1]$ is the set introduced in Lemma \ref{extension lemma}. Then $m$ can be lifted to a measure $\widetilde{m}\=(\pi|_{[0,1]\setminus Y})^{-1}_*m$ that is $\exp(P(\widetilde\varphi,\widetilde{f}) - \widetilde{\varphi})$-conformal for $\widetilde{f}$, with $\widetilde{\varphi}$ as in \eqref{eq extended potential}. This implies that $\widetilde{\mathcal{L}}_{\widetilde{\varphi}}^*\widetilde{m} = \exp(P(\widetilde{\varphi},\widetilde{f}))\widetilde{m}$, where $\widetilde{\mathcal{L}}_{\widetilde{\varphi}}$ is the transfer operator defined in \eqref{transfer operator}, and then Property \ref{itm H3} in \Cref{sec: Keller spaces} holds. Therefore, for each $\widehat{\gamma} \in (0,\gamma]$, Theorem \ref{keller theo} and Corollary \ref{corollary keller theo} hold with $T$ and $I$ replaced by $f$ and $[0,1]$ respectively. Let $A$ be the constant given by Theorem \ref{keller theo} and $H^{\widehat{\gamma},1}(m)$ be the corresponding Banach space.

By the considerations above, we only need to prove the uniqueness of the equilibrium state and part 3 of the theorem. Let $\nu$ be an ergodic equilibrium state for $\varphi$. It follows that
\begin{equation*}
    h_{\nu} = P(\varphi) -\int \varphi \dd\nu = P(\varphi) - \int \frac{1}{N}S_N\varphi \dd \nu \geq P(\varphi) - \sup\frac{1}{N}S_N\varphi>0.
\end{equation*}
By Ruelle's inequality (see \cite[Theorem 2]{Hofbauer91} and \cite[Theorem 7.1]{BarrioJimenez} for a version that applies to our setting) we have
\begin{equation*}
    \chi_{\nu}\geq h_{\nu}>0.
\end{equation*}
Then by \cite[Theorem 6]{Dobbs} we have $\nu\ll m$ and, using part 3 of Theorem \ref{keller theo}, we conclude the uniqueness of the equilibrium state.

Let $\chi$ be a branch Hölder-continuous function of exponent $\overline{\gamma}\in (0,1]$. It remains to show that the function $t\mapsto P(\varphi + t\chi)$ is real analytic on a neighborhood of $t=0$. For each $\tau\in\C$, let $\mathscr{L}_{\tau}$ be the operator defined in part 3 of Corollary \ref{corollary keller theo} with $T$ replaced by $f$, $\psi$ replaced by $\chi$, and $I$ replaced by $[0,1]$. On the other hand, for each $t\in \R$ put
\begin{equation*}
    \varphi_t \= \varphi + t\chi\text{ and } g_t\= \exp(\varphi_t - P(\varphi_t)),
\end{equation*}
and note that the operator $\exp(P(\varphi)-P(\varphi_t))\mathscr{L}_{t}$ coincides with the operator $\mathscr{L}_{g}$ defined in \eqref{transfer operator 2} with $g$ replaced by $g_t$, $T$ replaced by $f$, and $I$ replaced by $[0,1]$. Let $\widehat{\gamma}\in(0,1]$ be sufficiently small so that both $\varphi$ and $\chi$ are branch Hölder-continuous of exponent $\widehat{\gamma}$, let $p_1$ and $p_2$ be such that
\begin{equation*}
    \sup\frac{1}{N}S_N\varphi < p_1 < p_2 < P(\varphi),
\end{equation*}
and let $\varepsilon_0$ be small enough so that
\begin{equation*}
    \varepsilon_0 \sup|\chi| < \min\{p_2-p_1, P(\varphi)-p_2\}.
\end{equation*}
Note that by our choice of $\varepsilon_0$, for every $t\in (-\varepsilon_0,\varepsilon_0)$ we have
\begin{equation*}
    \varphi_t > \varphi - (P(\varphi)-p_2),
\end{equation*}
so
\begin{equation*}
    \begin{split}
        P(\varphi_t)&\geq
         P(\varphi-(P(\varphi)-p_2))=p_2.
    \end{split}
\end{equation*}
On the other hand,
\begin{equation*}
    \sup \frac{1}{N}S_N \varphi_t \leq \sup \frac{1}{N}S_N \varphi + \varepsilon_0\sup |\chi| < p_1 + (p_2-p_1) = p_2. 
\end{equation*}
Therefore,
\begin{equation*}
    \sup\frac{1}{N}S_N\varphi_t < P(\varphi_t).
\end{equation*}
That is, for every $t\in (-\varepsilon_0,\varepsilon_0)$ the potential $\varphi_t$ is hyperbolic for $f$. It follows that the function $(g_t)_N(x) \= g_t(x)\cdots g_t(f^{N-1}(x))$ satisfies 
\begin{equation*}\sup (g_t)_N = \sup \exp(S_N \varphi_t(x) - NP(\varphi_t))<1,
\end{equation*}
and by Theorem \ref{t: conformal measure} there is an atom-free $\exp(P(\varphi_t)-\varphi_t)$-conformal measure $m_t$ for $f$. Since $g_t$ is of bounded $(1/\widehat{\gamma})$-variation, as before it follows that properties \ref{itm H1}, \ref{itm H2} and \ref{itm H3} in \Cref{sec: Keller spaces} hold with $p=1/\widehat{\gamma}$, and with $g,T,I$ and $m$ replaced by $g_t,f,I$ and $m_t$ respectively. We can thus apply part 1 of Corollary \ref{corollary keller theo} to conclude that $\exp(P(\varphi_t)-P(\varphi))$ is equal to the spectral radius of $\mathscr{L}_t$. Moreover, by part 3 of the same corollary, the function $t\mapsto \exp(P(\varphi_t)-P(\varphi))$ is real analytic on a neighborhood of $t=0$. This completes the proof of Theorem \ref{general theo A}.
\endproof

In the following proposition, we prove that the hypothesis in part 1 of Theorem \ref{Theo A} is equivalent to the hyperbolicity of the potential. Similar characterizations were proved in \cite{InoquioRivera} for continuous maps.

\begin{prop}\label{prop characterization of hyperbolicity}
    Let $f\in \mathscr{F}$, and let $\varphi \in BH^{\gamma}(f)$ for some $\gamma \in (0,1]$. Then $\varphi$ is hyperbolic for $f$ if and only if $P(\varphi)>\max_{k\in N(f)}\varphi(\xi_k)$.
\end{prop}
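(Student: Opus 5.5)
One direction is immediate. If $\varphi$ is hyperbolic, there is $n$ with $\sup\frac1n S_n\varphi<P(\varphi)$. Evaluating at a neutral fixed point $\xi_k$, where $S_n\varphi(\xi_k)=n\varphi(\xi_k)$, gives $\varphi(\xi_k)<P(\varphi)$ for every $k\in N(f)$. (Here $\varphi(\xi_k)$ means the value of $\varphi$ at the fixed point; if $\varphi$ is discontinuous at $\xi_k\in\{0,1\}$ this is still the genuine value, so no extension is needed.)

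For the converse, assume $P(\varphi)>\max_{k\in N(f)}\varphi(\xi_k)$ and argue by contradiction. Suppose that for every $n$ there is $z_n$ with $\frac1n S_n\varphi(z_n)\ge P(\varphi)$.

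I will work in the continuous extension $(X,\widetilde f)$ of Lemma \ref{extension lemma}, with $\widetilde\varphi$ continuous. Since $\widetilde\varphi$ takes the one-sided limits, for each $n$ we can pick $\widetilde z_n\in\pi^{-1}(z_n)$ (say the $+$ copy) with $S_n\widetilde\varphi(\widetilde z_n)\ge S_n\varphi(z_n)-C$. The constant $C$ only accounts for the finitely many points $Y_N$ where $\varphi$ differs from both one-sided limits, as in the proof of Theorem \ref{t: conformal measure}; an orbit visits $Y_N\cup\{0,1\}$ boundedly often before landing on a fixed endpoint. Actually it is cleaner to note that $\sup_X\frac1nS_n\widetilde\varphi\ge\frac1n\sup S_n\varphi - C/n$.

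Next, form the empirical measures $\frac1n\sum_{j<n}\delta_{\widetilde f^j\widetilde z_n}$. Any weak$^*$ limit $\widetilde\mu$ is $\widetilde f$-invariant, and by continuity of $\widetilde\varphi$ it satisfies $\int\widetilde\varphi\,d\widetilde\mu\ge P(\varphi)=P(\widetilde\varphi,\widetilde f)$ by \eqref{tree pressure eq1}. Since $h_{\widetilde\mu}\ge0$ and $P$ is the supremum, $\widetilde\mu$ is an equilibrium state with $h_{\widetilde\mu}=0$. By ergodic decomposition we may take it ergodic, and it projects by part 5 of Lemma \ref{extension lemma} to an ergodic $f$-invariant $\mu=\pi_*\widetilde\mu$ with $h_\mu(f)=0$ and $h_\mu+\int\varphi\,d\mu=P(\varphi)$.

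The final step is to invoke the Key Lemma (Lemma \ref{key lemma}), which I expect to say that any ergodic measure of zero entropy other than a Dirac mass at a neutral fixed point has $h_\mu+\int\varphi\,d\mu<P(\varphi)$. This is how it is used in the Example, applied to $\delta_{1/2}$, and in the proof of Theorem \ref{t: conformal measure}. With that, $\mu=\delta_{\xi_k}$ for some $k\in N(f)$, giving $P(\varphi)=\varphi(\xi_k)\le\max_{k\in N(f)}\varphi(\xi_k)$, a contradiction.

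The main obstacle is the bookkeeping at discontinuities. One has to check that limits taken in $X$ correctly transfer the inequality $\int\widetilde\varphi\ge P$, and that $\int\varphi\,d\mu=\int\widetilde\varphi\,d\widetilde\mu$. The latter holds since $\widetilde\mu(\pi^{-1}Y)=0$, because $Y$ carries no invariant measure. A further subtlety is that the lift of $\delta_{\xi_k}$ has $\widetilde\varphi(\xi_k)=\varphi(\xi_k)$ only when $\xi_k\notin Y$. This is fine for $\xi_k\in(0,1)$ a fixed point, which is not a preimage of a discontinuity, and for $\xi_k\in\{0,1\}$, where $\widetilde\varphi=\varphi$.
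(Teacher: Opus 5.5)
Your proposal is correct and follows essentially the paper's route. You pass to the continuous extension $(X,\widetilde f)$ and show that non-hyperbolicity forces an invariant measure with $\int\varphi\dd\mu\ge P(\varphi)$. You build this measure from empirical measures, which proves the half of the identity $\inf_n\sup\frac1n S_n\widetilde\varphi=\max_{\mu}\int\widetilde\varphi\dd\mu$ that the paper cites, and it gives attainment of the maximum for free. You then conclude with the Key Lemma, which in fact applies to $\mu$ directly, without ergodicity or zero entropy. Your forward direction, evaluating $S_n\varphi$ at $\xi_k$, is more direct than the paper's.

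Two small repairs are needed.
\begin{enumerate}
\item Take $z_n$ with $\frac1n S_n\varphi(z_n)\ge P(\varphi)-\frac1n$, since the supremum need not be attained.
\item Choose the lift $\widetilde z_n$ on the side whose $\widetilde f$-orbit projects onto the $f$-orbit of $z_n$, not always the $+$ copy. Because $\widetilde f(x_k^-)=1$ and $\widetilde f(x_k^+)=0$, the wrong copy can send the lifted orbit to the other fixed endpoint. The error in $S_n$ then grows linearly in $n$ rather than staying bounded by a constant $C$.
\end{enumerate}
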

\begin{proof}
    Let $(X,\widetilde{f})$ be the continuous extension given by Lemma \ref{extension lemma} and let $\widetilde{\varphi}$ be as in \eqref{eq extended potential}. It is well known (see, for instance, \cite[Proposition 3.1]{InoquioRivera}, and \cite[Theorem A.3]{Morris1}) that
    \begin{equation*}
        \inf_{n\in \N} \sup_{x\in X} \frac{1}{n}S_n\widetilde{\varphi}(x) = \sup_{\mu\in \mathcal{M}_{\widetilde{f}}} \int \widetilde{\varphi}\dd\mu.
    \end{equation*}
    It follows from Lemma \ref{extension lemma} and \eqref{eq extended potential} that
    \begin{equation}\label{eq 1 characterization of hyperbolicity}
        \inf_{n\in \N} \sup_{x\in [0,1]}\frac{1}{n} S_n \varphi(x) = \sup_{\mu\in \mathcal{M}_f}\int \varphi \dd\mu.
    \end{equation}
    Now, assume that $\varphi$ is hyperbolic for $f$, then there exists $N\in\N$ such that
    \begin{equation*}\sup_{x\in [0,1]}\frac{1}{N}S_N\varphi(x) < P(\varphi).\end{equation*}
    From \eqref{eq 1 characterization of hyperbolicity} we have
    \begin{equation*}
         \sup_{\mu \in \mathcal{M}_f}\int \varphi \dd\mu<P(\varphi),
    \end{equation*}
    and in particular, $P(\varphi)> \max_{k\in N(f)}\varphi(\xi_k)$, proving the first implication.
    Now assume that $P(\varphi)>\max_{k\in N(f)} \varphi(\xi_k)$. Together with Lemma \ref{key lemma}, we obtain that $P(\varphi)>\sup_{\mu\in \mathcal{M}_f}\int \varphi \dd\mu$. Thus, by \eqref{eq 1 characterization of hyperbolicity} there is $N\in \N$ such that
    \begin{equation*}
         \sup_{x\in[0,1]}\frac{1}{N}S_N \varphi(x)<P(\varphi).
    \end{equation*}
    That is, $\varphi$ is hyperbolic for $f$. This completes the proof.
\end{proof}
\begin{proof}[Proof of Theorem \ref{Theo A}]
    Assume that part 1 does not hold. From \eqref{pressure} we know that
    \begin{equation}
    P(\beta\varphi) \geq \beta\max_{k\in N(f)}\varphi(\xi_k) \text{ for every } \beta>0.
    \end{equation}
     Then, by Proposition \ref{prop characterization of hyperbolicity}, there is $\beta_0>0$ such that $P(\beta_0 \varphi)= \beta_0 \max_{k\in N(f)}\varphi(\xi_k)$. Let $\beta_*$ be as in \eqref{eq 1 thm C}. Part (a) follows directly from Proposition \ref{prop characterization of hyperbolicity}. 

     Now, let $\beta\geq \beta_*$. Notice that
    \begin{equation*}
        \frac{1}{\beta}P(\beta\varphi) = \sup_{\mu\in \mathcal{M}_f} \left\{\frac{1}{\beta}h_\mu + \int \varphi \dd\mu\right\}\leq \sup_{\mu \in \mathcal{M}_f}\left\{\frac{1}{\beta_*}h_\mu + \int \varphi \dd\mu\right\} = \frac{1}{\beta_*}P(\beta_* \varphi) = \max_{k\in N(f)}\varphi(\xi_k),
    \end{equation*}
    and so \eqref{eq 2 thm 1} follows. For each $k\in N_{\max}(f)$, the measure $\delta_{\xi_k}$ is an ergodic equilibrium state of $\beta\varphi$. On the other hand, by Lemma \ref{key lemma}, any other ergodic equilibrium state must have positive entropy. Assume that $\mu$ is an ergodic equilibrium state for $\beta_*\varphi$ with positive entropy, and let $m_{\beta_*}$ be the fully supported $\exp(P(\beta_*\varphi)- \beta_*\varphi)$-conformal measure for $f$ given by Proposition \ref{p: before proof thm C}. By Ruelle's inequality (see \cite[Theorem 2]{Hofbauer91} and \cite[Theorem 7.1]{BarrioJimenez}) we get that
    \begin{equation*}
        \chi_\mu \geq h_\mu>0.
    \end{equation*}
    By \cite[Theorem 6]{Dobbs}, we obtain that $\mu\ll m_{\beta_*}$ and the density $h$ is bounded from below by a constant $c>0$ on an open interval $W\subseteq [0,1]$. Since $m_{\beta_*}$ is $\exp(P(\beta_*\varphi)-\beta_*\varphi)$-conformal, for every continuous function $\psi:[0,1]\to \R$ we have
    \begin{equation*}
    \begin{split}
        \int \psi\mathcal{L}_{\beta_*\varphi} h \dd m_{\beta_*} &= \int \mathcal{L}_{\beta_*\varphi}(h(\psi\circ f)) \dd m_{\beta_*}= e^{P(\beta_*\varphi)}\int h(\psi\circ f)\dd m_{\beta_*} = e^{P(\beta_*\varphi)}\int \psi \circ f \dd \mu\\
        &= e^{P(\beta_*\varphi)}\int \psi \dd \mu = e^{P(\beta_*\varphi)}\int \psi h\dd m_{\beta_*}.
        \end{split}
    \end{equation*}
    It follows that $\mathcal{L}_{\beta_*\varphi}h=e^{P(\beta_*\varphi)}h$. Now, let $x_0\in(0,1)$. Since $f$ is topologically exact on $(0,1)$ (see Remark \ref{rema topologically exact}), there exists $N\in \N$ such that $(0,1)\subseteq f^N(W)$. In particular, there is $y_0\in W\cap f^{-{N}}(x_0)$, and therefore
    \begin{equation*}
        h(x_0)= e^{-NP(\beta_*\varphi)}\sum_{y\in f^{-N}(x_0)} e^{\beta_*S_N\varphi(y)}h(y) \geq e^{\beta_*\inf (S_N \varphi-NP(\beta_*
        \varphi)}c>0.
    \end{equation*}
    Thus, the density $h$ is bounded from below by a positive constant. This implies that $m_{\beta_*}\ll \mu$ and then $\mu$ is fully supported. Moreover, if $\nu$ is another ergodic equilibrium state with positive entropy the argument above implies that $\nu\ll m_{\beta_*}$ and $m_{\beta_*}\ll \nu$. Thus, the measures $\mu$ and $\nu$ have the same sets of measure zero, which is impossible, since distinct ergodic measures are mutually singular. Hence, the potential $\beta_*\varphi$ admits at most one equilibrium state with positive entropy, and therefore, there exist at most $|N_{\max}(f)| + 1$ ergodic equilibrium states for $\beta_*$. Now, let $\beta>\beta_*$, and assume by contradiction that $\mu$ is an ergodic equilibrium state with positive entropy for $\beta\varphi$. It follows from \eqref{eq 2 thm 1} that
    \begin{equation*}
        \frac{1}{\beta_*}P(\beta_* \varphi) = \frac{1}{\beta}P(\beta\varphi) = \frac{1}{\beta}h_{\mu} + \int \varphi \dd\mu < \frac{1}{\beta_*}h_\mu + \int \varphi \dd\mu.
    \end{equation*}
    Hence,
    \begin{equation*}
        P(\beta_*\varphi) < h_\mu + \beta_* \int\varphi \dd\mu,
    \end{equation*}
    a contradiction.
    Finally, part (c) is a direct consequence of part 3 of Theorem \ref{general theo A} together with \eqref{eq 2 thm 1}.
\end{proof}
We finish this section with the following remark regarding the existence of equilibrium states with positive entropy for non-hyperbolic potentials.

\begin{rema}
    If $\varphi \in BH^{\gamma}(f)$ is not hyperbolic for $f$ and has an ergodic equilibrium state $\mu$ with positive entropy, then necessarily the series in \eqref{eq series conformal measures} diverges for every maximizing neutral fixed point $\xi$ with $f^{-1}(\xi)\neq \{\xi\}$. Otherwise, by Lemma \ref{lemma atomic conformal measure}, there would exist a fully supported $\exp(P(\varphi)-\varphi)$-conformal measure $m$ that gives full measure to $\bigcup_{n=0}^{+\infty}f^{-n}(\xi)$ for some maximizing neutral fixed point $\xi$ as above, and by \cite[Theorem 6]{Dobbs} the measure $\mu$ would be atomic.
\end{rema}

\appendix
\section{Key lemma.}\label{sec: key lemma}
In this appendix we prove Lemma \ref{key lemma}, which we refer to as the \textbf{Key Lemma} and is the main result needed to deduce Theorem \ref{Theo A} from Theorem \ref{general theo A}. Let $f$ be a map in $\sF$, and consider the collection of $f$-invariant probability measures
\begin{equation}
\mathcal{S}\=\left\{\sum_{k\in N(f)} t_k\delta_{\xi_k} : t_k\geq0 \text{ for every } k\in N(f) \text{ and } \sum_{k\in N(f)}t_k=1\right\}.
\end{equation}
\begin{lemm}[Key Lemma]\label{key lemma}
Let $\gamma\in(0,1]$ and let $\varphi:[0,1]\to\R$ be branch Hölder-continuous of exponent $\gamma$. Then for every $\mu\in \mathcal{M}_f \setminus\mathcal{S}$ we have
\begin{equation}\label{eq key lemma}
    P(\varphi)>\int \varphi \dd\mu.
\end{equation}
\end{lemm}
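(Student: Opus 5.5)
The plan is to argue by contradiction using the classical ``Key Lemma'' strategy of \cite{InoquioRivera}, adapted to full-branch maps: given $\mu\in\mathcal{M}_f\setminus\mathcal{S}$ with $\int\varphi\dd\mu = P(\varphi)$, we construct another invariant measure whose free energy strictly exceeds $P(\varphi)$. First reduce to the ergodic case. Write $\mu$ via its ergodic decomposition. Since $h_\nu+\int\varphi\dd\nu\le P(\varphi)$ for every $\nu$, if $\int\varphi\dd\mu\ge P(\varphi)$ then almost every ergodic component satisfies $\int\varphi\dd\nu\ge P(\varphi)\ge \int\varphi\dd\nu + h_\nu$, so $h_\nu=0$ and $\int\varphi \dd\nu=P(\varphi)$. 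Since $\mu\notin\mathcal{S}$, a positive-measure set of components are not Dirac masses at neutral fixed points. So it suffices to treat an ergodic $\mu\neq\delta_{\xi_k}$ with $\int\varphi\dd\mu=P(\varphi)$ (necessarily an equilibrium state with zero entropy).

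Next, pass to the symbolic extension. Using Lemma \ref{extension lemma}, lift $\mu$ to $\widetilde\mu$ on $X\cong\{0,\dots,d-1\}^{\N_0}$ with $\widetilde\varphi$ continuous. The key point is that $\mu$ is not supported on a neutral fixed point, so $\widetilde\mu$-typical orbits spend a definite positive frequency of time in regions where, after grouping, the dynamics is uniformly expanding. Concretely, using Lemma \ref{lemma bound Df^n}, cylinders $h_{k_1}\circ\cdots\circ h_{k_{n+1}}([0,1])$ ending with a switch of symbol have diameter at most $(1+\varepsilon_0 n)^{-(1+1/\alpha_{\max})}$ and bounded distortion. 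Combined with branch Hölder continuity of $\varphi$, this gives bounded distortion of $S_n\varphi$ on such cylinders. (For $k\in N(f)$, the Hölder exponent $\gamma$ times $1+1/\alpha$ may fail to be summable, so we only claim a sub-linear error $o(n)$ from the blocks near neutral points, which suffices.)

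Then build the competitor. By Birkhoff and ergodicity choose a $\mu$-typical point $x$ (not $\xi_k$) and times $n$ with $\frac1n S_n\varphi(x)\to P(\varphi)$ and $f^n(x)$ returning close to $x$, with the itinerary of $x$ up to time $n$ containing a symbol change near the end. Since $f$ is full-branch, there is a periodic point $p$ of period $n$ in the same $n$-cylinder, and also, using another symbol, a second, ``detour'' periodic orbit. Concatenating two distinct words $w_1$ (the Birkhoff block) and $w_2$ (a slightly modified block of the same length $n$, e.g.\ changing one symbol to one that still ends with a switch) freely gives a horseshoe: a subshift on $\{w_1,w_2\}^{\N}$. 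Its invariant measure $\eta$ (Bernoulli $(1/2,1/2)$) has entropy $\log 2/n$ and $\int\varphi\dd\eta\ge P(\varphi)-C/n - o(1)$ by distortion. Obtaining a gain beyond the loss is the crux: rather than comparing at a single $n$, use $k$-fold blocks, i.e.\ words in $\{w_1,w_2\}$ where $w_2$ differs from $w_1$ only in $O(1)$ symbols at the end. Then the potential loss per block is $O(1)$ (bounded distortion plus $\sup|\varphi|$ at finitely many changed symbols) while the entropy gain per block is $\log 2$... but that is also $O(1)$. To win, allow the switch in many positions: along a typical orbit of a non-atomic $\mu$ there are linearly many (frequency $\theta>0$) symbol switches in $[0,n]$, and at a positive fraction of them one can insert a free choice between two short words changing $\int\varphi$ by at most $\delta$ per insertion with $\delta\to0$ as the inserted words are made long relative to distortion. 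This yields entropy $\gtrsim\theta\log2/L$ with potential loss $\lesssim\theta\delta/L$, hence $h_\eta+\int\varphi\dd\eta>P(\varphi)$ for $\delta<\log 2$, contradicting \eqref{pressure}.

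The main obstacle is this last step: controlling the Birkhoff sum of $\varphi$ along the modified orbits uniformly. Near neutral fixed points, distortion of $S_n\varphi$ is not bounded because $\varphi$ is only $\gamma$-Hölder. I would first try to place all insertions only at switch times followed by long enough non-neutral stretches, and exploit Lemma \ref{lemma bound Df^n} together with the fact that a non-$\mathcal{S}$ ergodic measure gives positive frequency to such times. If this fails, the fallback is to pass to the first-return (induced) map to the complement of small neighbourhoods of the $\xi_k$, where branches are uniformly expanding with summable distortion, and run the standard Key Lemma argument of \cite{coronelRivera1} there.
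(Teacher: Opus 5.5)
Your reduction to an ergodic $\mu$ with $h_\mu=0$ and $\int\varphi\dd\mu=P(\varphi)$ is fine. The gap is in building the competitor measure, which is the step you yourself flag as the crux.

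\textbf{Why your construction does not close.} Birkhoff's theorem only controls the deficit $n\int\varphi\dd\mu-S_n\varphi$ along a typical orbit up to $o(n)$, and you also concede an $o(n)$ distortion error near the neutral points. Against this, a horseshoe built from a bounded number of words of length about $n$ gains only $O(1)$ entropy per block. So your remark that ``sub-linear error suffices'' is false. Your proposed fix does not work either: inserting detours at switch times, each costing at most $\delta$ with $\delta\to0$ ``as the inserted words are made long relative to distortion'', is unjustified. The cost of an insertion is the deficit of the inserted word plus a gluing distortion error. Neither tends to zero as the word gets longer: Birkhoff only makes the deficit $o(L)$, and the gluing error is a fixed constant unless you also force very close returns, which you do not arrange. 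In addition, ``linearly many switches'' fails for atomic $\mu\notin\mathcal S$, for example $\mu=\delta_{\xi_k}$ with $k\in E(f)$, whose itinerary never switches.

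\textbf{The two ideas you are missing.} The paper supplies both.
\begin{enumerate}
\item[1.] \emph{A bounded-deficit recurrence statement.} Apply Lemma~\ref{lemma 4 key lemma} ($\limsup_n S_n\psi\ge0$ a.e.\ for ergodic zero-mean $\psi$) in the natural extension. A generic backward orbit $(z_n)$ with $z_0\in Q$ then has infinitely many times $n_\ell$ with
\[
S_{n_\ell}\varphi(z_{n_\ell})\ge n_\ell\int\varphi\dd\mu-1,
\]
a constant deficit rather than an $o(n_\ell)$ one.
\item[2.] \emph{Distortion from the positive Lyapunov exponent, not from avoiding neutral points.} Since $\log Df\ge0$ everywhere, $\log Df>0$ on $Q$ and $\mu(Q)>0$, the exponent $\chi_\mu$ is positive. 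Together with Lemma~\ref{lemma bound Df^n}, this makes the inverse branches $\widetilde\phi_n$ along the generic backward orbit contract exponentially on all of $Q$. Hence $S_n\varphi\circ\widetilde\phi_n$ has uniformly bounded oscillation on $Q$ (Lemma~\ref{lemma birkhof sums key lemma}), with no loss near the $\xi_k$.
\end{enumerate}

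\textbf{How they combine.} These give a free IFS with infinitely many branches of strictly increasing lengths $m_\ell$, each with deficit at most one fixed constant $C$. Then $\sum_\ell e^{-C}\bigl(e^{\int\varphi\dd\mu}s\bigr)^{m_\ell}$ diverges as $s\uparrow e^{-\int\varphi\dd\mu}$, and the generating-function comparison forces $P(\varphi)>\int\varphi\dd\mu$. The essential point is that many branches (already more than $e^{C}$ in a finite horseshoe) with a uniform deficit beat the constant; two words never do. Your induced-map fallback would still need exactly this bounded-deficit input.
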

Let $\mu \in \mathcal{M}_f\setminus \mathcal{S}$. By the ergodic decomposition theorem, we can assume that $\mu$ is ergodic. Since $\mu$ is not in $\mathcal{S}$, either $\mu(I_k\setminus h_k(I_k))>0$ for some $k\in\{1\ldots,d\}$, or $\mu(h_j(I_j))>0$ for some $j\in E(f)$. In both cases, there exists $Q$ in $\mathcal{P}\vee f^{-1}\mathcal{P}$, contained in $I_k\setminus h_k(I_k)$ or $h_j(I_j)$ as appropriate, such that $\mu(Q)>0$ and $Df(x)>1$ for every $x\in Q$.

Let $\Sigma$ be the set of all infinite words in the alphabet $\N$ and for every $n$ in $\N$ put 
\begin{equation*}
    \Sigma_n\=\N^n \text{ and } \Sigma^*\=\bigcup_{n\in\N} \N^n.
\end{equation*}
For every $n$ in $\N$ and every $\underline{\ell}$ in $\Sigma_n$ the \emph{length} of $\underline{\ell}$ is $n$ and it is denoted by $|\underline{\ell}|$. An infinite sequence of pairwise distinct functions $(\phi_\ell)_{\ell\in\N}$ from $Q$ into $Q$ is called an \emph{iterated function system (IFS)}. For every $n$ in $\N$ and every finite word $\ell_1\cdots\ell_n$ in $\Sigma_n$ put 
\begin{equation}
    \phi_{\ell_1\cdots \ell_n} \= \phi_{\ell_1}\circ\cdots \circ\phi_{\ell_n}.
\end{equation}
We say that the IFS is \emph{free} if for all $\underline{\ell}$ and $\underline{\ell}'$ in $\Sigma^*$ with $\underline{\ell}\neq \underline{\ell}'$ we have that $\phi_{\underline{\ell}}$ is different from $\phi_{\underline{\ell}'}$. We say that the IFS is \emph{generated} by $f$ if for every $\ell$ in $\N$ there is $m_{\ell}$ in $\N$ such that $f^{m_\ell}\circ \phi_{\ell}$ is the identity on  $Q$. We say that $(m_\ell)_{\ell\in\N}$ is the \emph{time sequence} of $(\phi_{\ell})_{\ell\in\N}$. For every $n$ in $\N$ and every finite word $\ell_1\cdots\ell_n$ in $\Sigma_n$ put 
\begin{equation}
    m_{\ell_1\cdots\ell_n}\= m_{\ell_1}+\cdots m_{\ell_n}.
\end{equation}
We say that $(\phi_{\ell})_{\ell\in\N}$ is \emph{hyperbolic with respect to} $f$, if there are constants $C>0$ and $\lambda>1$ such that for every $x\in Q$, for every $\underline{\ell}\in\Sigma^*$ and for every $j\in\{1,\ldots,m_{\underline{\ell}}\}$ one has
\begin{equation}\label{eq hyperbolic IFS}
    |Df^{j}(f^{m_{\underline{\ell}}-j}(\phi_{\underline{\ell}}(x)))|\geq C\lambda^j.
\end{equation}
\begin{prop}\label{prop key lemma}
    For each branch Hölder-continuous potential $\varphi:[0,1]\to \R$ of exponent $\gamma\in(0,1]$, there are a constant $C>0$ and a free hyperbolic IFS $(\phi_{\ell})_{\ell\in \N}$ generated by $f$ with strictly increasing time sequence $(m_\ell)_{\ell\in\N}$ such that
    \begin{equation}\label{eq prop key lemma}
        \inf_{z\in Q} S_{m_\ell}\varphi(\phi_\ell(z)) \geq m_\ell \int \varphi \dd\mu - C.
    \end{equation}
\end{prop}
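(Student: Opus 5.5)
We will build the IFS from first-return branches of $f$ to $Q$, selected along $\mu$-typical orbits. Fix the ergodic measure $\mu\in\mathcal{M}_f\setminus\mathcal{S}$ and the set $Q\in\mathcal{P}\vee f^{-1}\mathcal{P}$ with $\mu(Q)>0$ and $Df>1$ on $Q$, as chosen above. By Birkhoff's ergodic theorem, $\mu$-almost every $x\in Q$ satisfies $S_n\varphi(x)/n\to\int\varphi\dd\mu$. By Poincaré recurrence, such $x$ return to $Q$ infinitely often.

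\textbf{Step 1: choosing return times.} Fix such a typical point $x_0\in Q$ whose forward orbit avoids the countable set $Y$, which is possible since $\mu(Y)=0$. Let $n_1<n_2<\cdots$ be the times with $f^{n_i}(x_0)\in Q$.

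For each return time $n=n_i$, let $J_n$ be the atom of $\bigvee_{j=0}^{n}f^{-j}\mathcal{P}$ containing $x_0$. Then $f^n$ maps $J_n$ diffeomorphically onto an atom of $\mathcal{P}$ containing $Q$, because branches are full. Since $Q$ is an atom of $\mathcal{P}\vee f^{-1}\mathcal{P}$, the inverse branch $\psi_n:=(f^n|_{J_n})^{-1}$ restricted to $Q$ lands in $J_n$. Because $x_0\in Q$ and $J_n$ starts with the same two symbols as $Q$, we get $\psi_n(Q)\subseteq Q$.

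Set $\phi_\ell:=\psi_{n_{i_\ell}}$ with $m_\ell:=n_{i_\ell}$ along a subsequence, chosen so that the times are strictly increasing and the maps are distinct.

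\textbf{Step 2: hyperbolicity.} The key point is that every $\phi_{\underline\ell}(Q)$ is contained in a cylinder $h_{k_1}\circ\cdots\circ h_{k_{n+1}}([0,1])$ whose last symbols are those of $Q$. The word of $Q$ either has $k_n\neq k_{n+1}$ or $k_{n+1}\in E(f)$. This is exactly why $Q$ was taken inside $I_k\setminus h_k(I_k)$ or $h_j(I_j)$ with $j\in E(f)$.

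Lemma \ref{lemma bound Df^n} then gives $Df^j\ge(1+\varepsilon_0 j)^{1+1/\alpha_{\max}}$ along the tail of the orbit, together with bounded distortion. This is only polynomial growth, which is not enough for \eqref{eq hyperbolic IFS}. To obtain the exponential lower bound $C\lambda^j$, I will additionally use that $Q$ is bounded away from the neutral points: the time an orbit spends near a neutral point before landing in $Q$ is bounded linearly by the escape time. I would try to restrict to return times whose orbit segments spend a definite proportion of time in $Q$. This is possible for typical points, since the frequency of visits is $\mu(Q)>0$. Each visit to $Q$ contributes a factor $\inf_Q Df>1$, so a positive frequency of visits produces an exponential lower bound. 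This is the step I expect to be the main obstacle: it requires choosing the subsequence so that every suffix $j$, not just the full time, has a positive proportion of visits to $Q$. A Pliss-type hyperbolic-times lemma applied to the sequence of indicators $\mathbf 1_Q\circ f^j$ should yield infinitely many such times.

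\textbf{Step 3: potential estimate and freeness.} Bounded distortion comes from Lemma \ref{lemma bound Df^n}. Hölder continuity of $\varphi$ on atoms of $\bigvee_{j<N}f^{-j}\mathcal{P}$, combined with the exponential contraction from Step 2, bounds $|S_{m_\ell}\varphi(\phi_\ell(z))-S_{m_\ell}\varphi(x_0)|$ by a constant independent of $\ell$. Only the last $N$ iterates lie in possibly different atoms, and they are controlled by $2N\sup|\varphi|$.

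Birkhoff alone gives only $S_{m_\ell}\varphi(x_0)\ge m_\ell\int\varphi\dd\mu-o(m_\ell)$, not a bounded error. To get the bounded error, I will choose the return times among those where $S_n\varphi(x_0)-n\int\varphi\dd\mu\ge -1$. Such times exist in abundance for $\mu$-a.e.\ $x_0$ by Atkinson's recurrence lemma (zero mean cocycle), intersected with returns to $Q$ and with the hyperbolic times. This intersection step is delicate; if needed, I would instead pass to the first-return map to $Q$ and apply these lemmas to its induced cocycle.

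Freeness follows because distinct words $\underline\ell$ give distinct itineraries: for $\underline\ell\neq\underline\ell'$, the compositions $\phi_{\underline\ell}$ and $\phi_{\underline\ell'}$ correspond to different concatenated cylinder codings, whose images are disjoint or distinct cylinders. Requiring the $m_\ell$ to be strictly increasing and the corresponding codings not to be prefixes-concatenations of each other ensures this; I would enforce it by a further thinning of the subsequence.
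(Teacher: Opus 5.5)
\textbf{Gap 1: freeness.} Your freeness step fails, and this is not a degenerate case that thinning can remove. All your maps are inverse branches along the \emph{same} forward orbit of $x_0$. Their codings are therefore nested prefixes $\omega_{[0,n)}$ of a single itinerary $\omega$, and concatenations of such prefixes can coincide, so ``distinct words give distinct itineraries'' is false.

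The cleanest failure is a periodic $\mu\notin\mathcal S$, e.g.\ $\mu=\delta_{\xi_j}$ with $j\in E(f)$. This is exactly the measure $\delta_{1/2}$ to which the paper applies the Key Lemma in its example. Then the only $\mu$-typical point is $x_0=\xi_j$, every $\psi_n$ equals $h_j^n|_Q$, and $\phi_1^{\,n_2}=h_j^{n_1n_2}=\phi_2^{\,n_1}$ for any choice of times, so no thinning helps.

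In general, compare the last letters of two words as in the proof of Lemma \ref{lemma A2}. A relation $\phi_{\underline\ell}=\phi_{\underline\ell'}$ forces $f^{n_b-n_a}\circ\psi_{n_b}=\psi_{n_a}$, i.e.\ $\omega_{[n_b-n_a,n_b)}=\omega_{[0,n_a)}$. This is a recurrence event you would have to exclude for every pair of selected times, on top of your other constraints, and you give no mechanism for it.

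The missing idea is to append an inverse branch that \emph{deviates} from the reference orbit. The paper sets $\phi_\ell=(f^4|_{Q'})^{-1}\circ\widetilde\phi_{n_\ell}$ with $Q'\in\{Q_2^2,Q_1^{d-1}\}$ chosen so that $\phi_\ell(z_0)\neq z_{n_\ell+4}$. Lemma \ref{lemma A2} then gives freeness directly.

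\textbf{Gap 2: choosing the times.} The step you flagged is also a real gap. Pliss's lemma gives hyperbolic times of positive density. Lemma \ref{lemma 4 key lemma} (or Atkinson) only gives infinitely many times with $S_n\varphi(x_0)\ge n\int\varphi\dd\mu-1$, possibly of zero density. Nothing guarantees that these two sets meet.

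Inducing on $Q$ in the base does not repair this. Whether a return time has uniformly expanding suffixes depends on the preceding excursions: a long stay near a neutral point just before the return gives only polynomial expansion, by Lemma \ref{Lemma M-P estimates}.

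The paper avoids the intersection altogether by working along a $\nu$-generic \emph{backward} orbit $\underline z=(z_n)$ in the natural extension. Since $f^{n-j}\circ\widetilde\phi_n=\widetilde\phi_j$, every suffix of $\widetilde\phi_n$ is itself an initial backward segment. So the ergodic theorem for $\widehat f^{-1}$ (using $\chi_\mu>0$) and the distortion bound of Lemma \ref{lemma bound Df^n} give $Df^j(\widetilde\phi_j(z))\ge C'\lambda^j$ for all $j$ and all $z\in Q$ at once. Lemma \ref{lemma 4 key lemma} for $\widehat f^{-1}$ then selects the times $n_\ell$ with bounded Birkhoff deficit. These times need not be returns to $Q$, because the appended $f^{-4}$ branch brings the image back into $Q$.

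Your remark that each visit to $Q$ contributes a factor $\inf_Q Df>1$, while $Df\ge 1$ everywhere, is correct. It could replace the distortion argument once the times are chosen in this way.
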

\noindent
\subsection{Proof of the Key lemma assuming Proposition \ref{prop key lemma}}
 Let $(\phi_{\ell})_{\ell\in \N}$ be the IFS given by Proposition \ref{prop key lemma} with time sequence $(m_\ell)_{\ell\in \N}$ and constant $C$. Fix $z_0 \in Q$. For every $N\in \N$ put
    \begin{equation}
      \Lambda_N\=\sum_{\underline{\ell}\in \Sigma^*, m_{\underline{\ell}}=N} \exp(S_N\varphi (\phi_{\underline{\ell}}(z_0))).
    \end{equation}
   Observe that, since the IFS $(\phi_{\ell})_{\ell\in \N}$ is free and generated by $f$, for two distinct words $\underline{\ell}$ and $\underline{\ell}'$ in $\Sigma^*$ with $m_{\underline{\ell}}=m_{\underline{\ell}'}$, the images of $\phi_{\underline{\ell}}$ and $\phi_{\underline{\ell}'}$ are disjoint, as both $\phi_{\underline{\ell}}$ and $\phi_{\underline{\ell}'}$ are inverse branches of the same iterate of $f$. Then by Lemma \ref{tree pressure} we have
    \begin{equation}
        P(\varphi) \geq \limsup_{N\to+\infty}\frac{1}{N}\log \Lambda_N.    
    \end{equation}
    Now consider the following generating function
    \begin{equation}
        \Xi(s) \= \sum_{N\in\N} \Lambda_N s^N.
    \end{equation}
    The radius of convergence R of $\Xi(s)$ is at least $\exp(-P(\varphi))$. Observe that
    \begin{equation}
        \Xi(s)= \sum_{\underline{\ell}\in \Sigma^*}\exp( S_{m_{\underline{\ell}}} \varphi (\phi_{\underline{\ell}}(z_0)))s^{m_{\underline{\ell}}}
    \end{equation}
    
    By Proposition \ref{prop key lemma} for every $\underline{\ell}\in\Sigma^*$ we have
    \begin{equation*}
        S_{m_{\underline{\ell}}}
        \varphi(\phi_{\underline{\ell}}(z_0)) \geq m_{\underline{\ell}} \int \varphi \dd\mu - |\underline{\ell}|C.
    \end{equation*}
    Then, defining 
    \begin{equation}
        \Phi(s) \= \sum_{\ell=1}^{+\infty} \exp\left( m_\ell \int \varphi \dd \mu - C\right)s^{m_\ell}
    \end{equation}
    we get that the power series in $s$
    \begin{equation}\label{sum of Phi}
        \Phi(s) + \Phi(s)^2 + \Phi(s)^3+\cdots
    \end{equation}
    has coefficients smaller than or equal to the corresponding coefficients of $\Xi(s)$. Observe that, since $(m_{\ell})_{\ell\in \N}$ is strictly increasing, the radius of convergence of $\Phi(s)$ is $\widehat{R}= \exp(-\int \varphi \dd\mu)$ and that 
    \begin{equation}
        \lim_{s\to\widehat{R}^-} \Phi (s) = +\infty.
    \end{equation}
    Then, there is $s_0\in (0,\widehat{R})$ such that $\Phi(s_0)\in (1,+\infty)$, and this implies that the radius of convergence of the series \eqref{sum of Phi} is strictly smaller than $s_0$. Then
    \begin{equation*}
        \exp(-P(\varphi)) \leq R < s_0 <\widehat{R} = \exp\left(-\int \varphi \dd \mu \right),
    \end{equation*}
    finishing the proof of the lemma.

\subsection{Proof of Proposition \ref{prop key lemma}}
    \begin{lemm}\label{lemma A2}
        Let $(z_n)_{n\in\N_0}$ be a sequence in $[0,1]$ such that $z_0$ is in $Q$ and for every $n$ in $\N_0$ we have that $z_n = f(z_{n+1})$. Let $M\geq 3$ be an integer and let $(n_\ell)_{\ell\in \N}$ be a strictly increasing sequence of positive integers such that $n_{\ell+1}\geq n_\ell + M$. For every $\ell$ in $\N$, let $x_\ell$ be a point of $Q$ in $f^{-M}(z_{n_\ell})$ different from $z_{n_\ell + M}$, and let $\phi_\ell$ be the inverse branch of $f^{n_\ell + M}$ from $Q$ into itself such that $\phi_\ell(z_0)=x_\ell$. Then the IFS $(\phi_\ell)_{\ell\in\N}$ generated by $f$ is free with time sequence $(n_\ell + M)_{\ell\in \N}$.
    \end{lemm}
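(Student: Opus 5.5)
The plan is to show that two different words cannot give the same map. The tool is to look at how the backward orbit of the point $\phi_{\underline{\ell}}(z_0)$ first separates from the fixed sequence $(z_n)$. This separation detects the \emph{last} letter of $\underline{\ell}$. Once the last letter is known, we cancel it on the right and argue by induction on the length of the words.

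First we record some facts about the maps. Put $m_\ell\= n_\ell+M$. By construction $f^{m_\ell}\circ\phi_\ell=\mathrm{id}_Q$, so the IFS is generated by $f$ with time sequence $(n_\ell+M)_{\ell\in\N}$. The sequence is strictly increasing because $n_{\ell+1}\ge n_\ell+M>n_\ell$, which also makes the $\phi_\ell$ pairwise distinct once freeness is proved.

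Next we describe the orbit of $x_\ell=\phi_\ell(z_0)$. It satisfies $f^{M}(x_\ell)=z_{n_\ell}$, and hence $f^{M+j}(x_\ell)=z_{n_\ell-j}$ for $0\le j\le n_\ell$. Reading this backwards from $z_0$, we get $f^{m_\ell-k}(x_\ell)=z_k$ for $0\le k\le n_\ell$. On the other hand, $f^{0}(x_\ell)=x_\ell\neq z_{n_\ell+M}$ by hypothesis.

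We also use a rigidity property. Since $Q$ is an interval (an atom of $\mathcal{P}\vee f^{-1}\mathcal{P}$) and $f$ is injective on each element of the partitions $\bigvee_{j=0}^{k-1} f^{-j}\mathcal{P}$, two inverse branches of $f^k$ defined on $Q$ that agree at one point, or on a nonempty open subset, coincide on $Q$.

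Now suppose $\phi_{\underline{\ell}}=\phi_{\underline{\ell}'}$ on $Q$ for words $\underline{\ell},\underline{\ell}'\in\Sigma^*$.

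\emph{Step 1 (equal times).} If $m_{\underline{\ell}}<m_{\underline{\ell}'}$, put $q\= m_{\underline{\ell}'}-m_{\underline{\ell}}>0$. From $f^{m_{\underline{\ell}}}\circ\phi_{\underline{\ell}}=\mathrm{id}_Q$ and $f^{m_{\underline{\ell}'}}\circ\phi_{\underline{\ell}}=\mathrm{id}_Q$ we get $f^{q}=\mathrm{id}$ on $Q$. This is impossible, because $Q$ is a nondegenerate interval and $Df>1$ on $Q$ (or because of Lemma~\ref{lemma bound Df^n}). Hence $m\= m_{\underline{\ell}}=m_{\underline{\ell}'}$.

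\emph{Step 2 (the departure time detects the last letter).} Let $y\=\phi_{\underline{\ell}}(z_0)$, let $a$ be the last letter of $\underline{\ell}$, and define
\[
k^*\=\min\{k\in\{0,\dots,m\}: f^{m-k}(y)\neq z_k\}.
\]
Since $\phi_{\underline{\ell}}=\phi_{\underline{v}}\circ\phi_a$, we have $f^{m-m_a}(y)=x_a$, so $f^{m-k}(y)=f^{m_a-k}(x_a)$ for $k\le m_a$. The facts above then give $f^{m-k}(y)=z_k$ for $k\le n_a$ and $f^{m-n_a-M}(y)=x_a\neq z_{n_a+M}$. Hence
\[
n_a<k^*\le n_a+M .
\]
The windows $(n_\ell,n_\ell+M]$ are pairwise disjoint because $n_{\ell+1}\ge n_\ell+M$. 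So $k^*$, which depends only on $y$, determines $a$. Applying the same argument to $\underline{\ell}'$, the two words have the same last letter $a$.

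\emph{Step 3 (right cancellation and induction).} Write $\underline{\ell}=\underline{v}a$ and $\underline{\ell}'=\underline{v}'a$. Then $\phi_{\underline{v}}$ and $\phi_{\underline{v}'}$ are inverse branches of the same iterate $f^{m-m_a}$ on $Q$, and they agree on the nonempty open set $\phi_a(\operatorname{int}Q)$. By rigidity they agree on $Q$. If one of $\underline{v},\underline{v}'$ is empty, Step 1 forces the other to be empty as well, since $\phi_{\underline{v}}=\mathrm{id}$ would give $f^{m_{\underline{v}}}=\mathrm{id}$ on $Q$. Otherwise we apply Steps 1--3 again. Induction on $|\underline{\ell}|+|\underline{\ell}'|$ gives $\underline{\ell}=\underline{\ell}'$, which is freeness.

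The main obstacle is making Step~2 rigorous. One has to check carefully that $f^{M+j}(x_\ell)=z_{n_\ell-j}$ along the whole window, so that the orbit of $y$ really tracks $(z_k)$ up to time $n_a$ and then separates inside the window $(n_a,n_a+M]$. One also has to check that the spacing $n_{\ell+1}\ge n_\ell+M$ makes the windows disjoint. The other point needing care is the rigidity claim used in Step~3, namely that inverse branches on $Q$ are determined by their values on an open set.
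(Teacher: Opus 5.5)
Your proposal is correct and follows the paper's proof. Step 1 is the paper's unequal-times case. Step 2 is the paper's equal-times argument: the paper evaluates $\phi_{\ell_n}=f^{n_{\ell'_k}-n_{\ell_n}}\circ\phi_{\ell'_k}$ at $z_0$ to obtain $x_{\ell_n}=z_{n_{\ell_n}+M}$, which is the same observation as your departure times lying in the disjoint windows $(n_\ell,n_\ell+M]$. Your Step 3 (cancelling the common last letter using rigidity of inverse branches on the open set $\phi_a(\operatorname{int}Q)$, then inducting on length) spells out the reduction that the paper only asserts with ``the general case can be reduced to this one.''
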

    \begin{proof}
        Let $\underline{\ell}= \ell_1\cdots\ell_n$ and $\ell^{'}= \ell_1^{'}\cdots \ell_k^{'}$ be in $\Sigma^*$ with $\underline{\ell}\neq \underline{\ell}^{'}$. Assume that $m_{\ell} \neq m_{\underline{\ell}^{'}}$. Without loss of generality we assume that $m_{\underline{\ell}}<m_{\underline{\ell}^{'}}$. Suppose we had $\phi_{\underline{\ell}}=\phi_{\underline{\ell}^{'}}$. Then 
        $f^{m_{\underline{\ell}^{'}}}\circ \phi_{\underline{\ell}}= \text{Id}|_{Q}$, which implies that $f^{m_{\underline{\ell}^{'}}-m_{\underline{\ell}}}= \text{Id}|_{Q}$ and thus, $m_{\underline{\ell}}= m_{\underline{\ell}^{'}}$ giving a contradiction. Now assume that $m_{\underline{\ell}}=m_{\underline{\ell}^{'}}$. We also assume that $\ell_n\neq \ell_k^{'}$; the general case can be reduced to this one. Without loss of generality, we assume that $\ell_n<\ell_k^{'}$. In particular, we have $m_{\ell_n}<m_{\ell_k^{'}}$. Suppose we had $\phi_{\underline{\ell}}= \phi_{\underline{\ell}^{'}}$ then $f^{m_{\underline{\ell}}-m_{\ell_n}}\circ \phi_{\underline{\ell}}= f^{m_{\underline{\ell}^{'}}-m_{\ell_n}}\circ \phi_{\underline{\ell}^{'}}$ and thus,
        \begin{equation*}
            \phi_{\ell_n}= f^{m_{\ell_k^{'}}-m_{\ell_n}}\circ \phi_{\ell_k^{'}}= f^{n_{\ell_k^{'}}-n_{\ell_n}}\circ \phi_{\ell_k^{'}}.
        \end{equation*}
        Evaluating this last equality at $z_0$ and using that $n_{\ell_k^{'}}-n_{\ell_n}>M$ we get that
        \begin{equation*}
            x_{\ell_n}= f^{n_{\ell_k^{'}}-n_{\ell_n}}(x_{n_{\ell_k^{'}}})= f^{n_{\ell_k^{'}}-n_{\ell_n}-M}(z_{n_{\ell_k^{'}}}) = z_{n_{\ell_n}+M}.
        \end{equation*}
        However, by hypothesis, these two points are different. Therefore, $\phi_{\underline{\ell}}\neq \phi_{\underline{\ell^{'}}}$ and this concludes the proof of the lemma.
    \end{proof}
    \begin{lemm}\label{lemma birkhof sums key lemma}
        Let $\gamma\in (0,1]$ and $\varphi\in BH^\gamma(f)$. For each IFS $(\phi_\ell)_{\ell\in\N}$ generated by f, hyperbolic with respect to $f$ and with time sequence $(m_\ell)_{\ell\in\N}$ the following holds. There is a constant $\Delta'>0$ such that for every $\underline{\ell}\in \Sigma^*$ and all $x$ and $y$ in $Q$ we have 
        \begin{equation}
            |S_{m_{\underline{\ell}}} \varphi(\phi_{\underline{\ell}}(x)) 
            -S_{m_{\underline{\ell}}}\varphi(\phi_{\underline{\ell}}(y))|  \leq \Delta'.
        \end{equation}
    \end{lemm}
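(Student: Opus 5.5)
The plan is the standard bounded-distortion argument for Birkhoff sums along inverse branches, with Hölder regularity replaced by branch Hölder regularity and exponential contraction supplied by the hyperbolicity \eqref{eq hyperbolic IFS}. Fix $\underline{\ell}\in\Sigma^*$, put $m\= m_{\underline{\ell}}$, and for $x,y\in Q$ write $x'\=\phi_{\underline{\ell}}(x)$ and $y'\=\phi_{\underline{\ell}}(y)$. Since $f^{m}\circ\phi_{\underline{\ell}}=\mathrm{Id}|_Q$ and $Q\in\mathcal{P}\vee f^{-1}\mathcal{P}$ is an interval on which the relevant iterates are monotone, $\phi_{\underline{\ell}}(Q)$ is an interval. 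We will show it is contained in a single atom of $\bigvee_{j=0}^{m-1}f^{-j}\mathcal{P}$, and that $f^m$ maps it diffeomorphically onto $Q$. We then split
\[
|S_m\varphi(x')-S_m\varphi(y')|\leq \sum_{i=0}^{m-1}|\varphi(f^i(x'))-\varphi(f^i(y'))|
\]
and reindex by $j\= m-i\in\{1,\dots,m\}$, so that $f^i(x')=f^{m-j}(\phi_{\underline{\ell}}(x))$.

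\textbf{Step 1: contraction of the intermediate intervals.} For each $j$, the interval $J_j\= f^{m-j}(\phi_{\underline{\ell}}(Q))$ is mapped by $f^j$ diffeomorphically onto $Q$. By the mean value theorem and \eqref{eq hyperbolic IFS}, applied at an interior point of the form $f^{m-j}(\phi_{\underline{\ell}}(z))$,
\[
|J_j|\leq \frac{|Q|}{C\lambda^{j}}\leq C^{-1}\lambda^{-j}.
\]
This holds uniformly in $\underline{\ell}$.

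\textbf{Step 2: apply the branch Hölder property.} Let $N$ be given by Definition \ref{branch Holder}. For $j\ge N$, the interval $J_j$ is contained in a single atom of $\bigvee_{i=0}^{N-1}f^{-i}\mathcal{P}$, because $f^i(J_j)=J_{j-i}$ lies in one element of $\mathcal{P}$ for each $i$. This last fact is where care is needed. The branch $\phi_{\underline{\ell}}$ is an inverse branch of $f^m$ defined on all of $Q$, and $f$ is injective on each $I_k$ and is a bijection $I_k\to[0,1]$. Hence each $J_{j}$ is the image of the interval $Q$ under a composition of inverse branches $h_k$ restricted appropriately, so it lies in some $\overline{I}_k$, and its interior lies in $\mathrm{int}\,I_k$. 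Boundary points can only cause issues at finitely many discontinuity values, and those are handled by the continuous extension to $\partial P$ in Definition \ref{branch Holder}.

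On such an atom $\varphi$ is $\gamma$-Hölder with constant $H$, so the $j$-th term is at most $H C^{-\gamma}\lambda^{-\gamma j}$. For the finitely many $j<N$ we use the crude bound $2\sup|\varphi|$, which is finite since a branch continuous function with continuous extensions on finitely many atoms is bounded. Summing over $j$ gives
\[
|S_m\varphi(x')-S_m\varphi(y')|\le 2N\sup|\varphi| + HC^{-\gamma}\sum_{j\ge1}\lambda^{-\gamma j}\=\Delta',
\]
which is independent of $\underline{\ell}$, $x$, $y$.

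\textbf{Main obstacle.} The delicate point is Step 2's claim that each $J_j$ for $j\geq N$ sits inside one atom on which $\varphi$ is Hölder, rather than straddling a discontinuity point $x_k$. The paper defines the IFS maps as inverse branches from $Q$ into $Q$, so $J_j$ is a continuous injective image of the interval $Q$ under a composition of the homeomorphic extensions $h_k$. I would argue as follows: if $\mathrm{int}\,J_j$ contained some $x_k$, then $f^{j}$ would fail to be injective or continuous on $J_j$, contradicting that $f^j|_{J_j}$ is a diffeomorphism onto $Q$. Endpoints coinciding with discontinuities affect only values at single points, and those are controlled by the continuous extension to the closure of the atom. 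A more robust alternative, if needed, is to work on the extension $(X,\widetilde f)$ from Lemma \ref{extension lemma} with $\widetilde\varphi$, which is Hölder on cylinders.
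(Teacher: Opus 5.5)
Your proposal is correct and is essentially the paper's proof: you bound the last $N$ terms crudely by $2N\sup|\varphi|$, and you bound the remaining terms by the Hölder constant on atoms of $\bigvee_{i=0}^{N-1}f^{-i}\mathcal{P}$ times the contraction $C^{-1}\lambda^{-j}$ from \eqref{eq hyperbolic IFS}, which sums to a geometric series independent of $\underline{\ell}$. Your check that $f^{m-j}(\phi_{\underline{\ell}}(Q))$ lies in a single atom makes explicit what the paper leaves implicit, namely that the $\phi_\ell$ are genuine inverse branches; one small correction there is that boundary points are not handled by the continuous extension of $\varphi|_P$, which says nothing about the value of $\varphi$ at a boundary point belonging to a neighbouring atom, but they in fact cannot occur, since $f^{m}\circ\phi_{\underline{\ell}}=\mathrm{Id}|_Q$ with $\phi_{\underline{\ell}}=h_{k_1}\circ\cdots\circ h_{k_m}$ on $Q$ forces each $f^{i}(\phi_{\underline{\ell}}(z))$, $0\le i<m$, into the actual element $I_{k_{i+1}}$ of $\mathcal{P}$.
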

    \begin{proof}
        Let $N\in\N$ be such that $\varphi|_P$ is Hölder-continuous of exponent $\gamma$ for every $P$ in $\bigvee_{j=0}^{N-1}f^{-j}\mathcal{P}$. Then, there is a constant $D>0$ such that for each $P$ in $\bigvee_{j=0}^{N-1}f^{-j}\mathcal{P}$ and each $x$ and $y$ in $P$ we have
        \begin{equation}\label{eq Holder constant key lemma}
            |\varphi(x) - \varphi(y)|\leq D|x-y|^{\gamma}.
        \end{equation}
        On the other hand, by \eqref{eq hyperbolic IFS} there are constants $C>0$ and $\lambda>1$ such that for every $j\in \{1,\ldots,m_{\underline{\ell}}\}$
        \begin{equation}\label{eq contraction key lemma}
            |f^{m_{\underline{\ell}}-j}(\phi_{\underline{\ell}}(x)) - f^{m_{\underline{\ell}}-j}(\phi_{\underline{\ell}}(y)) |\leq C\lambda^{-j}.
        \end{equation}
        On the one hand, if $m_{\underline{\ell}}\leq N$ we have
        \begin{equation}
            |S_{m_{\underline{\ell}}} \varphi(\phi_{\underline{\ell}}(x)) 
            -S_{m_{\underline{\ell}}}\varphi(\phi_{\underline{\ell}}(y))| \leq 2N\sup|\varphi|.
        \end{equation}
        On the other hand, if $m_{\underline{\ell}}>N$ then from \eqref{eq Holder constant key lemma} and \eqref{eq contraction key lemma} we obtain
        \begin{equation*}
        \begin{split}
            |S_{m_{\underline{\ell}}} \varphi(\phi_{\underline{\ell}}&(x)) 
            -S_{m_{\underline{\ell}}}\varphi(\phi_{\underline{\ell}}(y))|\\&\leq \sum_{j=0}^{m_{\underline{\ell}}-1-N}|\varphi(f^j(\phi_{\underline{\ell}}(x))) - \varphi(f^j(\phi_{\underline{\ell}}(y)))| + \sum_{j=m_{\underline{\ell}}-N}^{m_{\underline{\ell}}-1}|\varphi(f^j(\phi_{\underline{\ell}}(x))) - \varphi(f^j(\phi_{\underline{\ell}}(y)))|\\
            &\leq D\sum_{j=0}^{m_{\underline{\ell}}-1-N}|f^{j}(\phi_{\underline{\ell}}(x)) - f^j(\phi_{\underline{\ell}}(y))|^{\gamma} + 2N\sup |\varphi|\\
            &\leq DC^\gamma \sum_{j=0}^{m_{\underline{\ell}-1-N}} (\lambda^{\gamma})^{-(m_{{\underline{\ell}}}-j)} + 2N\sup|\varphi|.
            \end{split}
        \end{equation*}
        Taking $\Delta^{'} \= DC^{\gamma} \sum_{j=0}^{+\infty} (\lambda^{\gamma})^{-j}  + 2N\sup |\varphi|$ we finish the proof of the lemma.
    \end{proof}
    \begin{lemm}\label{lemma 4 key lemma}
        Let $(X,\sB,\mu)$ be a probability space and let $T\colon X\to X$ be an ergodic measure preserving map. Then, for each integrable function $\varphi\colon X\to\R$ with null integral, there is a full measure set of $x$ in $X$ such that
        \begin{equation}
            \limsup_{n\to+\infty} S_n\varphi(x) \geq 0.
        \end{equation}
    \end{lemm}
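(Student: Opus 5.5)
We argue by contradiction, using the Birkhoff ergodic theorem together with a recurrence (Kac-type) argument on a level set of $\limsup S_n\varphi$. Suppose the set $B\=\{x\in X: \limsup_{n\to+\infty} S_n\varphi(x)<0\}$ has positive measure. Since $B$ is the countable union of the sets
\[
B_{k}\=\Big\{x: S_n\varphi(x)\le -\tfrac1k \text{ for all } n\ge k\Big\},
\]
some $B_k$ has positive measure.

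The cleanest route goes through the maximal ergodic theorem, or equivalently Atkinson's recurrence lemma. The plan is to show that for an ergodic measure preserving $T$ and integrable $\varphi$ with $\int\varphi\dd\mu=0$, the cocycle $S_n\varphi$ is recurrent: for a.e. $x$ and every $\varepsilon>0$ one has $|S_n\varphi(x)|<\varepsilon$ for infinitely many $n$. This immediately gives $\limsup S_n\varphi(x)\ge -\varepsilon$ for every $\varepsilon$, hence the claim.

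To keep the argument self-contained, I would proceed as follows, by a direct contradiction. Fix $k$ with $\mu(B_k)>0$, and set $A\=B_k$, $\eta\=1/k$. The steps are:
\begin{enumerate}
\item[(1)] By ergodicity and Birkhoff's theorem, for a.e. $x$ the visit frequency of the orbit to $A$ is $\mu(A)>0$, and $S_n\varphi(x)/n\to0$.
\item[(2)] Pick a generic $x$ and its successive visit times $t_1<t_2<\dots$ to $A$. Thin them to a subsequence $s_1<s_2<\dots$ with gaps $s_{i+1}-s_i\ge k$. The thinned sequence still has positive density, at least $\mu(A)/k$.
\item[(3)] Since $T^{s_i}x\in A$ and $s_{i+1}-s_i\ge k$, we get $S_{s_{i+1}-s_i}\varphi(T^{s_i}x)\le-\eta$ for each $i$. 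Telescoping gives $S_{s_j}\varphi(x)\le S_{s_1}\varphi(x)-(j-1)\eta$.
\item[(4)] Because $s_j\le Cj$ for large $j$ by positive density, this yields $\limsup S_{s_j}\varphi(x)/s_j\le -\eta/C<0$. This contradicts $S_n\varphi(x)/n\to\int\varphi\dd\mu=0$.
\end{enumerate}

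The main obstacle is step (4): the drop of $\eta$ per visit must be linear in time, which requires that the returns to $A$, after thinning, have positive density. This is exactly what Birkhoff's theorem applied to $\mathbf 1_A$ provides, via ergodicity. If the density claim in (2) needs care, I would instead choose greedily $s_{i+1}\=$ the first visit time to $A$ after $s_i+k$. The number of indices $s_i\le n$ is then at least $\#\{t_j\le n\}/k$, which is asymptotically $n\mu(A)/k$. With this, the contradiction is complete, so $\mu(B)=0$, i.e.\ $\limsup_{n\to+\infty}S_n\varphi\ge0$ $\mu$-almost everywhere.
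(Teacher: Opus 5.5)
Your proof is correct, and it takes a different route from the paper only in the sense that the paper gives no argument at all: it simply refers to Lemma A.4 of Coronel--Rivera-Letelier.

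Your argument is a self-contained substitute for that citation. It uses only Birkhoff's ergodic theorem, applied to $\mathbf 1_A$ and to $\varphi$, together with the cocycle identity $S_{a+b}\varphi=S_a\varphi+S_b\varphi\circ T^a$. The steps check out:
\begin{itemize}
\item $\{\limsup S_n\varphi<0\}=\bigcup_k B_k$.
\item Choosing $s_{i+1}$ greedily as the first visit to $A$ at time $\ge s_i+k$ places at most $k$ visit times in each $[s_i,s_{i+1})$. Hence $\#\{i:s_i\le n\}\ge\frac1k\#\{j:t_j\le n\}$, which for a generic point gives $s_j\le 2kj/\mu(A)$ for all large $j$.
\item The telescoped bound $S_{s_j}\varphi(x)\le S_{s_1}\varphi(x)-(j-1)/k$ then yields $\limsup_j S_{s_j}\varphi(x)/s_j\le-\mu(A)/(2k^2)<0$. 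This contradicts $S_n\varphi(x)/n\to0$.
\end{itemize}
One phrasing point: state the density bound as a $\liminf$ (for example, $\#\{i:s_i\le n\}\ge\frac{\mu(A)}{2k}n$ for large $n$) before converting it into $s_j\le Cj$. That conversion is exactly what you use in step (4).

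The detour through Atkinson's recurrence theorem in your second paragraph is unnecessary. It would give the stronger conclusion $\liminf_n|S_n\varphi|=0$ almost everywhere, but at the cost of invoking a deeper result.

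What the citation buys the paper is brevity. What your argument buys is transparency and independence from the reference. It also makes visible that ergodicity enters only through the two Birkhoff limits: positive visit frequency to $A$, and $S_n\varphi/n\to0$.
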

    \begin{proof}
        See \cite[Lemma A.4]{coronelRivera1}.
    \end{proof}
    Put $I\=[0,1]$ and recall that $\mu$ is an ergodic measure in $\mathcal{M}_f\setminus \mathcal{S}$  such that $\mu(Q)>0$. Denote by $(\widehat{I},\widehat{f})$ the natural extension of $(I,f)$. That is, $\widehat{I}$ is the set of sequences $(z_n)_{n\in\N_0}$ in $I$ such that for every $n\in \N_0$ one has $z_n=f(z_{n+1})$, and $\widehat{f}$ is the bijective map from $\widehat{I}$ onto $\widehat{I}$ defined for every $(z_n)_{n\in\N_0}$ in $\widehat{I}$ by
    \begin{equation}
        \widehat{f}((z_n)_{n\in\N_0})= (f(z_0),z_0,z_1,z_2,\ldots,z_n,\ldots).
    \end{equation}
    The space $\widehat{I}$ inherits a Borel $\sigma$-algebra as a subset of the product space $\prod_{n=0}^{+\infty}I$ and the map $\widehat{f}$ is a measurable isomorphism. Denote by $\Pi\colon \widehat{I}\to I$ the projection onto the zeroth coordinate. We have that $\Pi\circ\widehat{f}= f\circ \Pi$, and there is a unique invariant probability measure $\nu$ for $\widehat{f}$ such that $\Pi_*\nu = \mu$. Since $\mu$ is ergodic, the measure $\nu$ is also ergodic for $\widehat{f}$ and $\widehat{f}^{-1}$. Since $\mu(Q)>0$ we have that $\nu(\Pi^{-1}(Q))>0$. Put $\widehat{I}'\=\Pi^{-1}(Q)$. Fix $\underline{z}$ in $\widehat{I}'$ such that $\underline{z}$ is generic for the Ergodic Theorem for $\widehat{f}^{-1}$, $\nu$ and the bounded measurable function $\log Df\circ \Pi$. Then, we have
    \begin{equation}\label{eq generic erg thm}
        \lim_{n\to+\infty}\frac{1}{n}\sum_{j=0}^{n-1}\log Df\circ \Pi(\widehat{f}^{-j}\underline{z})= \chi_\mu(f) >0.
    \end{equation}
    By Lemma \ref{lemma 4 key lemma} applied to the function $\varphi\circ \Pi\circ f^{-1} - \int\varphi d\mu$, we can choose $\underline{z}$ so that, in addition, we have
    \begin{equation}
        \limsup_{n\to+\infty} \sum_{j=0}^{n-1}\left(\varphi \circ\Pi\circ\widehat{f}^{-1}(\widehat{f}^{-j}\underline{z}) - \int \varphi \dd\mu\right)\geq 0.
    \end{equation}
    For every $\ell\in \N$ there is a unique inverse branch $\widetilde{\phi}_\ell$ of $f^{\ell}$ verifying that $\Pi(\widehat{f}^{-\ell}\underline{z})$ is in the image of $\widetilde{\phi}_\ell$. Recall that either $Q\subseteq I_k\setminus h_k(I_k)$ for some $k\in\{1,\ldots,d\}$ or $Q\subseteq h_j(I_j)$ for some $j\in E(f)$. In the first case, $Q$ is the image of the composition of two distinct inverse branches of $f$, and in the second case, we have that $\overline{Q} = h_j^2([0,1])$. Thus, in all the cases, by Lemma \ref{lemma bound Df^n} and \eqref{eq generic erg thm} there are $C'>0$ and $\lambda>1$ such that, increasing $C'>0$ if necessary, for every $n\in \N$ and every $z\in Q$ we have
    \begin{equation}\label{eq key lemma expanding}
        Df^n(\widetilde{\phi}_n(z))\geq C' \lambda^n.
    \end{equation}
    Then there is a strictly increasing sequence $(n_\ell)_{\ell\in \N}$ in $\N$ such that
    \begin{equation}\label{eq n_ell}
        S_{n_\ell} \varphi(z_{n_\ell})\geq n_\ell \int \varphi \dd\mu - 1, n_{\ell+1}\geq n_{\ell}+4, C'\lambda^{\frac{n_1}{2}}>\lambda^{2},
    \end{equation}
and the sequence $(z_{n_\ell})_{\ell\in\N}$ converges to some $ w \in [0,1]$.
    Observe that $Q$ contains $d$ elements of $\mathcal{P}\vee f^{-1}\mathcal{P}\vee f^{-2}\mathcal{P}$, let say $Q_1,\ldots,Q_d$, and let $a_1<\ldots <a_{d+1}$ be such that 
    \begin{equation}
        \text{int } Q_j = (a_j,a_{j+1}) \text{ for every $j\in\{1,\ldots,d\}$}.
    \end{equation}
    Then we have that $f^3$ maps $(a_j,a_{j+1})$ bijectively onto $(0,1)$ for every $j\in\{1,\ldots,d\}$. For each $j\in\{1,\ldots,d\}$, let $Q_j^1,\ldots,Q_j^d$ be the elements of $\mathcal{P}\vee f^{-1}\mathcal{P}\vee f^{-2}\mathcal{P} \vee f^{-3}\mathcal{P}$ contained in $Q$ ordered increasingly. Let $a'$ and $a''$ be such that
    \begin{equation}
        \text{int } Q_1^d = (a',a_2) \text{ and }\text{int }Q_2^1=(a_2,a''). 
    \end{equation}
    Then $f^4$ maps $(a',a_2)$ and $(a_2,a'')$ onto $(0,1)$ bijectively, and notice that $a'<a_2<a''$. Now we distinguish two cases: The first case is when $w\in[0,a_2]$. By passing to a subsequence, we can assume that
    \begin{equation}
        (z_{n_\ell})_{\ell\in\N} \text{ is in } (0,a'').
    \end{equation}
    For every $\ell\in\N$ we put
    \begin{equation}
        \phi_\ell:= (f^4|_{Q_2^2})^{-1}\circ (\widetilde{\phi}_{n_\ell}|_Q).
    \end{equation}
    The second case is when $w\in (a_2,1]$. Again, by passing to a subsequence we can assume that 
    \begin{equation}
        (z_{n_\ell})_{\ell\in\N} \text{ is in }(a',1].
    \end{equation}
    For every $\ell\in\N$ we put
    \begin{equation}
        \phi_\ell:= (f^4|_{Q_1^{d-1}})^{-1}\circ (\widetilde{\phi}_{n_\ell}|_Q).
    \end{equation}
In both cases put $m_\ell\=n_\ell +4$. We have in all the cases that for every $\ell\in\N$, the point $x_\ell \= \phi_\ell(z_0)$ is in $f^{-4}(z_{n_\ell})$ and it is different from $z_{m_\ell}$. Thus by Lemma \ref{lemma A2} and the second inequality in \eqref{eq n_ell}, we get that the IFS $(\phi_\ell)_{\ell\in\N}$ is free.

Now, we prove that the IFS $(\phi_\ell)_{\ell\in\N}$ is hyperbolic. By 
\eqref{eq key lemma expanding}, the second and the third inequalities in \eqref{eq n_ell}, for every $z$ in $Q$ and for every $j\in\{0,1,2,3\}$ we have in all the cases that 
\begin{equation}
    Df^{m_\ell -j}(f^j(\phi_\ell(z))) \geq \lambda^{\frac{m_\ell-j}{2}}.
\end{equation}
Together with \eqref{eq key lemma expanding}, this implies that for every $j\in\{1,\ldots,m_{\underline{\ell}}\}$ one has
\begin{equation}
    Df^j(f^{m_{\underline{\ell}}-j}(\phi_{\underline{\ell}}(z)))\geq C'\lambda ^{\frac{j}{2}},
\end{equation}
and thus, the IFS $(\phi_{\ell})_{\ell\in\N}$ is hyperbolic with constants $C'>0$ and $\lambda^{1/2}>1$.

It remains to prove \eqref{eq prop key lemma}. Put
\begin{equation}
    C_1\= - \inf_{x\in[0,1]} \varphi \text{ and } C''\= 1 +4C_1 + 4 \int \varphi \dd\mu.
\end{equation}
By the first inequality in \eqref{eq n_ell}, we have
\begin{equation}
\begin{split}
    S_{m_\ell} \varphi (\phi_\ell(z_0)) &= S_4 \varphi (\phi_\ell(z_0)) + S_{n_\ell} \varphi(\widetilde{\phi}_{n_\ell}(z_0))\\
    &\geq -4C_1 + n_\ell \int \varphi \dd\mu - 1\\
    &\geq m_\ell \int \varphi \dd\mu - C''.
    \end{split}
\end{equation}
Together with Lemma \ref{lemma birkhof sums key lemma}, this completes the proof of \eqref{eq prop key lemma} with $C= C'' + \Delta'$, concluding the proof of Proposition \ref{prop key lemma}.

\section*{Acknowledgments}

We would like to thank Juan Rivera-Letelier and Godofredo Iommi for carefully reading the manuscript and for their valuable comments and corrections.


\end{document}